\documentclass[numbers,webpdf,imaiai]{ima-authoring-template}
\usepackage{amsmath,amsfonts,amsthm}

\makeatletter
\renewcommand*\l@subsubsection[2]{{}{}{}}
\makeatother

\makeatletter
\AtBeginDocument{%
  \renewenvironment{proof}[1][\proofname]{%
    \par\removelastskip
    \pushQED{\qed}%
    \normalfont
    \topsep 7.5\p@ \@plus 7.5\p@\relax
    \trivlist
    \item[\hskip\labelsep\itshape
          #1\@addpunct{.}\ ]%
    \ignorespaces
  }{%
    \popQED
    \endtrivlist
    \@endpefalse
  }%
}
\makeatother

\usepackage{etoolbox}

\makeatletter
\patchcmd{\@maketitle}
  {[\@history]}
  {\ifx\@history\@empty\else[\@history]\fi}
  {}{\PackageWarning{V14_IMA}{Could not patch the manuscript-history field}}
\makeatother

\usepackage{subcaption}
\usepackage{textcomp}
\usepackage{stfloats}
\usepackage{url}
\usepackage{verbatim}
\usepackage{graphicx}
\usepackage{tikz}
\usetikzlibrary{arrows.meta,calc,positioning}

\DeclareMathOperator*{\argmin}{arg\,min}

\providecommand{\norm}[1]{%
  \ensuremath{\left\lVert #1 \right\rVert}%
}

\providecommand{\abs}[1]{%
  \ensuremath{\left\lvert #1 \right\rvert}%
}

\let\originalleft\left
\let\originalright\right
\renewcommand{\left}{\mathopen{}\mathclose\bgroup\originalleft}
\renewcommand{\right}{\aftergroup\egroup\originalright}

\usepackage{mathtools}

\theoremstyle{thmstyletwo}
\newtheorem{theorem}{Theorem}
\newtheorem{lemma}{Lemma}
\newtheorem{proposition}{Proposition}
\newtheorem{corollary}{Corollary}
\theoremstyle{thmstylethree}
\newtheorem{assumption}{Assumption}

\newtheorem{remark}{Remark}

\numberwithin{equation}{section}

\begin{document}

\copyrightyear{2026}

\title[Lasso Universality Under Linearly Dependent Covariates]
{Lasso Universality Under Linearly Dependent Covariates in the Sparse Regime}

\authormark{Mesforush and Parhi}

\author{Soroush Mesforush\ORCID{0009-0006-5821-2294}* and Rahul Parhi\ORCID{0000-0002-1971-7699}
\address{%
\orgname{University of California, San Diego},
\orgaddress{%
\state{La Jolla, CA} \postcode{92093},
\country{USA}}}}

\corresp[*]{Corresponding author:
\url{smesforush@ucsd.edu}}

\abstract{
Throughout the last decade, Gaussian universality has been widely studied for high-dimensional estimation problems. Most of the literature focuses on i.i.d.\
sensing matrices or accounts for special forms of dependence, such as block dependence or other specific row/column dependencies. More general simultaneous row and column mixing has not yet been fully studied.
In this paper, we focus on that setting.
We prove a Gaussian universality theorem for the lasso in the sparse regime, where the non-Gaussian covariates have linearly dependent rows and columns. To the best of our knowledge, our setting permits a broader simultaneous row and column dependence structure than those treated in much of the prior universality literature.
%
Numerical illustrations for various sparse profiles support the universality claims of this paper.
}

\keywords{
Gaussian universality,
lasso,
high-dimensional linear regression,
sparse estimation,
dependent random designs,
random matrix theory,
primal--dual witness method.
}

\maketitle

\section{Introduction}\label{Sec:SecI}
High-dimensional statistics and compressed sensing often consider problems in which the ambient dimension is comparable to or larger than the sample size~\cite{maleki2026high}. These problems are typically formulated as estimating $\beta^\star \in \mathbb{R}^d$ from the noisy observations
\begin{equation}\label{eq:mainproblem}
    y = X\beta^\star+\xi \in \mathbb{R}^n,
\end{equation}
where $X\in\mathbb{R}^{n\times d}$ and $\xi\in\mathbb{R}^n$ is additive noise, typically modeled as $\xi\sim\mathcal{N}(0,\sigma^2 I_n)$, $\sigma > 0$, independent of $X$. The proportional high-dimensional regime considered here is specified precisely in Section~\ref{Sec:SectionII}. For example, in linear regression problems, $X$ is the design matrix and $\beta^\star$ is the coefficient vector. Meanwhile, in signal recovery problems, $X$ is the sensing matrix and $\beta^\star$ is the signal of interest.

This model is applicable in a broad range of high-dimensional signal recovery problems, see, e.g.,~\cite{tibshirani1996regression,bayati2011lasso,candes2007dantzig,dobson2018introduction,donoho2009message,thrampoulidis2018precise,hu2019asymptotics,donoho2011noise,tsuda2026universality}. 
The analyses of these papers investigate properties of estimators such as the lasso. These have led to precise characterizations of estimation error, risk formulas, noise sensitivity, and phase transition behavior. 



The problem of estimating $\beta^\star$ from the observations \eqref{eq:mainproblem} is intractable without structural assumptions on the design/sensing matrix.
%
Strong assumptions on $X$ have therefore been widely adopted in the literature. For example, it is fairly common to assume that the entries of $X$ are i.i.d.\ Gaussian random variables. In reality, $X$ is almost always structured, with dependence between the entries~\cite{krahmer2014structured,do2011fast,dudeja2022universality,dudeja2023universality}. This has led to frameworks for studying such dependence, including approximate message passing~\cite{donoho2009message} and the convex Gaussian min--max theorem~\cite{celentano2023lasso,gerbelot2023graph}.
%

Another line of work considers distributional assumptions on the design matrix, where the entries may be non-Gaussian, non-i.i.d., and dependent. Recent work on \emph{universality phenomena} has demonstrated that, in appropriate asymptotic regimes, these design matrices behave the same as i.i.d.\ Gaussian designs~\cite{korada2011applications,PanahiHassibiLS,MontanariElastic,bayati2015universality,oymak2018universality,wang2024universality,han2023universality,han2025entrywise,han2025long,HuTIT,gerace2024gaussian,ghane2024universality,LahiryTIT,dudeja2022universality,dudeja2023universality,dudeja2024spectral,moniri_hassani_dependent_ridge,wen2025does,tsuda2026universality}. These are referred to as universality phenomena as they define universal laws that control the overall behavior of a system, agnostic to the microscopic subtleties.

There have been many attempts to study universality under different dependent designs. For example, block-dependent designs and other structured designs are investigated in~\cite{LahiryTIT,tsuda2026universality,dudeja2024spectral}. However, to the best of our knowledge, there is a gap in the literature when considering general row/column dependencies, which motivates our investigation, particularly in the \emph{sparse} regime. Indeed, in many settings following the observation model \eqref{eq:mainproblem}, it is natural to assume that $\beta^\star$ is sparse. Applications include compressed sensing \cite{DonohoCS,candesRombergTao,candes2007sparsity}, medical and computational imaging~\cite{lustig2008compressed,duarte2008single}, image reconstruction \cite{PritchardDenoiser}, wireless channel estimation~\cite{BajwaSayeedNowakCS}, and signal detection~\cite{MyPaper2024}. The typical way to solve \eqref{eq:mainproblem} in sparse settings is with $\ell^1$-regularization, often called the \emph{lasso}~\cite{tibshirani1996regression}, given by
\begin{equation}\label{eq:lassoproblem}
\hat{\beta} \in \argmin_{\beta\in\mathbb{R}^d}\left\{\frac 1{2n} \left\|y-X\beta\right\|_2^2+\lambda\|\beta\|_1\right\}, \quad \lambda > 0.
\end{equation}

Thus, we investigate the universality property of the squared error (SE) in estimating $\beta^\star$ via the lasso. We investigate general notions of row and column linear dependence of $X$. Following~\cite{moniri_hassani_dependent_ridge}, we consider the model $X = AZB$ in which $Z\in \mathbb{R}^{n\times d}$  is a random sub-Gaussian matrix with i.i.d.\ entries, and $A\in\mathbb{R}^{n\times n}$ and $B\in\mathbb{R}^{d\times d}$ are matrices controlling the mixing of the covariates.
%
In this paper, we focus on the following question: 
\begin{center}
    \itshape In the sparse regime, when are the lasso solutions and squared error universal under linearly dependent covariates?
\end{center}

We remark that a lot of effort has been put into support recovery via the lasso, where the goal is to recover the locations of the nonzero entries of $\beta^\star$. Support recovery generally requires conditions where there are not excessive correlations between ``active'' and ``inactive'' covariates~\cite{zhao2006model,wainwright2009sharp}. Under these conditions, the lasso solution is unique and exactly recovers the signed support. This is done via primal-dual witness (PDW) construction proposed in~\cite{wainwright2009sharp}, which involves solving an optimization problem restricted to the true support of $\beta^\star$ and then verifying that the inactive coordinates satisfy strict dual feasibility. 
Although the goal of this paper is not support recovery, it turns out that there is an interesting connection between the PDW framework and universality, which we crucially exploit.




%


\subsection{Contributions and Outline}


Motivated by the gap in universality theorems and inspired by the PDW framework, we consider a reduction-based argument similar to~\cite{wainwright2009sharp}, but in the context of universality, which we next outline. We begin by defining the ``restricted'' or ``oracle'' lasso estimator, which has knowledge of the support:
\begin{equation}\label{eq:OracleObj}
\hat{\beta}^{\mathrm{oracle}} \in \argmin_{\substack{\beta \in \mathbb{R}^d \\ \operatorname{supp}(\beta) \subseteq S}}\left\{\frac 1{2n} \left\|y-X\beta\right\|_2^2+\lambda\|\beta\|_1\right\},
\end{equation}
where $S \coloneqq \operatorname{supp}(\beta^\star)  \coloneqq \{i\in\{1,\ldots,d\} : \beta_i^\star\neq 0\}$. We first prove in Proposition~\ref{prop:RestrictedProp}, a universality result for \eqref{eq:OracleObj}, under the design $X = AZB$ discussed above. In particular, we show that \eqref{eq:OracleObj} is concentrated around the deterministic problem
\begin{equation}\label{eq:detlasso}
\hat{\beta}^\mathrm{det} \in \argmin_{\beta\in \mathbb{R}^k} \left\{\frac 1{2n} \mathrm{tr}(A^\top A) \left\|B_S(\beta_S^\star-\beta)\right\|_2^2+\lambda\|\beta\|_1\right\},
\end{equation}
where $k = |S|$, the vector $\beta^\star_S \in \mathbb{R}^k$ denotes the restriction of $\beta^\star$ to the support set $S$, and the matrix $B_S \in \mathbb{R}^{d \times k}$ denotes the corresponding active submatrix (see Section~\ref{Sec:Notations} for a precise definition). 

Next, we consider the setting inspired by the PDW framework, which holds for many reasonable choices of $A$ and $B$. We prove in Theorem~\ref{thm:lassoFullUniversality} that, under these conditions, the full lasso \eqref{eq:lassoproblem}, and the oracle lasso \eqref{eq:OracleObj} solutions are equal with high probability and, in particular, the previous universality result can be readily transferred to the full lasso solution \eqref{eq:lassoproblem}. 

We reiterate that the design dependency considered in this paper allows general row and column dependence. This takes a step beyond i.i.d., block-dependent, and structured designs considered in previous works. Finally, through simulations, we provide illustrations of the lasso SE for both the oracle \eqref{eq:OracleObj}, and the full lasso estimators \eqref{eq:lassoproblem} under various choices of designs compatible with our framework.


\subsection{Related work}

\paragraph{Classical Universality.} A classical route to universality is using replacement and swapping techniques such as Lindeberg's method \cite{lindeberg1922neue,chatterjee2006lindeberg}. The seminal work of \cite{Donoho2009} on phase transitions in sparse recovery paved the way to study universality in various contexts. In particular, ``classical'' universality results crucially rely on design matrices having i.i.d.\ entries. For example, universality in convex regression problems was discussed in \cite{PanahiHassibiLS}. Meanwhile, the universality of the elastic-net error is studied in \cite{MontanariElastic}. Furthermore, universality has been investigated in concepts such as polytope geometry~\cite{bayati2015universality}, and a general universality framework for regularized estimators is derived in~\cite{han2023universality}.


\paragraph{Dependent Designs.}
As previously discussed, there has been preliminary work on universality under dependent designs~\cite{dudeja2024spectral,tsuda2026universality,dudeja2022universality,moniri_hassani_dependent_ridge,HuTIT,LahiryTIT,dudeja2023universality,wang2024universality}. For example,~\cite{LahiryTIT} extends Gaussian universality for linear regression to block-dependent linear models. A follow-up work \cite{tsuda2026universality}, inspired by \cite{LahiryTIT}, considers the block-dependence idea and formulates a broader universality framework for linear estimators, directly extending \cite{han2023universality}. Furthermore, universality in linearized message passing for phase retrieval with structured design matrices is discussed in \cite{dudeja2022universality,dudeja2023universality}. Moreover, \cite{dudeja2024spectral} introduces a notion of universality classes for design matrices and argues that matrices that fall in their class yield similar regularized least squares performance in the high-dimensional regime. These classes
are discussed broadly, where the left linear transform of i.i.d.\ matrices, along with the model in \cite{moniri_hassani_dependent_ridge}, motivated the model used in this paper. Lastly, \cite{moniri_hassani_dependent_ridge} considers linearly dependent covariates, and proves the universality of ridge regression. In a nutshell, these papers provide initial results for universality under dependent designs.

\paragraph{Other Notions of Universality.}
Universality has applications that extend beyond those discussed so far. Universality laws governing random feature, perceptron, and transfer learning models have been investigated. For example, \cite{HuTIT} shows a random feature model is equivalent to a linear Gaussian model in the context of generalization and training errors. This is true when the covariance matrices match. Their proof is based on the Lindeberg principle discussed beforehand. Furthermore,  Gaussian universality for perceptrons and transfer learning is studied in \cite{gerace2024gaussian,ghane2024universality}. 




\subsection{Notation}\label{Sec:Notations}

The following notation is used throughout this paper:  For an index set $T\subset[n]$, we denote the subvector of $v$ indexed by $T$ as $v_T$. Similarly, for any matrix $X$, we denote the submatrix formed by the columns indexed by $T$ as $X_{T}$.  Moreover, for a square matrix $Q\in\mathbb{R}^{n\times n}$ we denote the spectral and Frobenius norm as  $\|Q\|_\mathrm{op}$ and $\|Q\|_F$, respectively. Moreover, when $Q$ is symmetric, we define its smallest eigenvalue as $\lambda_\mathrm
{min}(Q)$. The $K \times K$ identity matrix is represented by $I_K$. The Kronecker product of two matrices $X$ and $Y$ and the vectorization operator that transforms the matrix $X$ to a single column vector are denoted by $X \otimes Y$ and $\mathrm{vec}(X)$, respectively. On an event $A$, the indicator function is defined as $\mathbf{1}_A$. Given sequences $f(n)$ and $g(n)$, the notation $f(n) = o\left(g(n)\right)$ means $\lim_{n\to\infty} f(n)/g(n) = 0$ and the notation $f(n) = O(g(n))$ means that $\sup_n\left|\frac{f(n)}{g(n)}\right|<\infty$. Furthermore for a sequence of random variables $X_n$ the notation $X_n=O_\mathbb{P}(a_n)$ and $X_n=o_\mathbb{P}(a_n)$ as $n\to \infty$ denote stochastic boundedness and convergence in probability. We also often use the notation such as $y(Z) = X(Z)\beta^\star+\xi$ to explicate the dependence on the randomness of the problem. When it is clear from context, we drop the dependence for brevity.

\section{Main Universality Results}\label{Sec:SectionII}

In this section, we introduce the main results and an overview of the route taken to obtain these results. As previously discussed, the motivation that inspired this proof structure is based on the PDW framework \cite{wainwright2009sharp}. We show that the lasso solution and its SE remain agnostic to the randomness of the linearly dependent design matrix in the sparse regime. In particular, this paper exclusively focuses on the setting
\begin{equation}
    \frac d n \to \gamma\in(0,\infty) \quad\text{as}\quad n,d\to \infty.
\end{equation}

The way we obtain this result is a two-step procedure. First, we prove a universality result for the oracle solution \eqref{eq:OracleObj}, then, using a weaker version of the mutual incoherence condition from the PDW framework~\cite{wainwright2009sharp}, we prove that the restricted universality result transfers to the full lasso solution \eqref{eq:lassoproblem}. A high-level diagram illustrating the proof architecture is in Figure~\ref{fig:diagramproc}. 
\begin{figure}[htb!]
\centering
\begin{tikzpicture}[
    x=1cm,
    y=1cm,
    >={Stealth[length=1.7mm,width=1.2mm]},
    roadmap/.style={
        draw=black!62,
        line width=0.5pt,
        rounded corners=1pt,
        fill=black!1,
        align=center,
        inner xsep=4pt,
        inner ysep=4pt,
        font=\sffamily\scriptsize,
        text width=2.95cm,
        minimum height=1.04cm
    },
    bridge/.style={roadmap,fill=black!5},
    outcome/.style={roadmap,draw=blue!52!black,fill=blue!5},
    finaloutcome/.style={outcome,fill=blue!10,text width=6.55cm,minimum height=1.02cm},
    flow/.style={
        ->,
        draw=black!66,
        line width=0.55pt,
        line cap=round,
        line join=round,
        shorten <=1.2pt,
        shorten >=1.8pt
    }
]
\node[outcome] (prop) at (-1.95,0)
    {\textbf{Restricted solution}\\
     Proposition~\ref{prop:RestrictedProp}: $\|\hat{\beta}^\mathrm{oracle}_S -{\hat{\beta}}^\mathrm{det}\|_2\xrightarrow{\mathbb{P}} 0,$};
\node[outcome] (corone) at (-1.95,-1.58)
    {\textbf{Restricted SE}\\
     Corollary~\ref{cor:Corollary}: oracle SE universality};
\node[bridge] (pdw) at (1.95,0)
    {\textbf{PDW bridge}\\
     Assumption~\ref{ass:weirdstrong} and inactive-coordinate concentration};
\node[outcome] (theorem) at (1.95,-1.58)
    {\textbf{Full solution}\\
     Theorem~\ref{thm:lassoFullUniversality}: \\ $\hat\beta$ unique, $\hat{\beta}=\hat\beta^{\mathrm{oracle}}$ w.h.p.};
\node[finaloutcome] (cortwo) at (0,-3.18)
    {\textbf{Full SE universality}\\
     Corollary~\ref{cor:Corollary2}: the full lasso has the same asymptotic SE under $Z$ and $G$};

\draw[flow] (prop.east) -- (pdw.west);
\draw[flow] (prop.south) -- (corone.north);
\draw[flow] (pdw.south) -- (theorem.north);
\draw[flow] (corone.south) -- ++(0,-0.25)
    -| ($(cortwo.north)+(-1.48,0)$);
\draw[flow] (theorem.south) -- ++(0,-0.25)
    -| ($(cortwo.north)+(1.48,0)$);
\end{tikzpicture}
\caption{Proof roadmap.}
\label{fig:diagramproc}
\end{figure}

Before formally stating the main results, we consider the main definitions and assumptions governing this work. 
\begin{assumption}\label{ass:Sparse}
The ground truth signal vector $\beta^\star$ is $k$-sparse, with support $S=\operatorname{supp}(\beta^\star)$, and satisfies $\|\beta^\star\|_2^2=\alpha^2$, $k=o(n)$. 

\end{assumption}
\begin{remark}\label{rem:remarkaboutsignals}
Assumption~\ref{ass:Sparse} holds for a wide range of sparse signals; see, e.g.,~\cite{dai2009subspace}. For example, the following signal profiles satisfy Assumption~\ref{ass:Sparse}, where $S=\{s_1,\dots,s_k\}$  denotes the support set.

\begin{itemize}
    \item \textbf{Geometrically Decaying Profile.}
    For signs $\sigma_i\in\{-1,1\}$ and any fixed $r\in(0,1)$,
\begin{equation}\label{eq:geomsig}
\beta^\star_{s_i} = \sigma_i\alpha \sqrt{\frac{1-r^2}{1-r^{2k}}}r^{i-1},\quad i=1,\dots,k.
\end{equation}
\item 
\textbf{Anchor-Diffuse Profile.}
An anchor coordinate is considered, and a fixed fraction of the signal energy is dedicated to it. The remaining energy is distributed along the other coordinates. This model can be defined without loss of generality for $k\geq2$, $\vartheta\in[0.5,1)$, $r\in (0,1)$, and $\sigma_{i}\in\{-1,1\}$:
\begin{equation}\label{eq:anchorsig}
\beta^\star_{s_i}=\begin{cases}
\sigma_{1}\alpha\sqrt{\vartheta},\quad i= 1\\ 
  \sigma_{i}\alpha\sqrt{\frac{(1-\vartheta)(1-r^2)}{1-r^{2(k-1)}}}r^{i-2},\quad i=2,...,k.
\end{cases} 
\end{equation}
\item 
\textbf{Normalized Power Profile.}
Let $p>\frac 1 2$, and $\sigma_{i}\in\{-1,1\}$. Then,
\begin{equation}\label{eq:powersig}
\beta^\star_{s_i}=\sigma_i \alpha \frac{i^{-p}}{\left(\sum_{\ell=1}^k \ell^{-2p}\right)^\frac 1 2},\quad i=1,\dots,k.
\end{equation}
\end{itemize}


As we shall see later, all of these signals (as well as other kinds sparse signals) fit the framework discussed in this paper.
\end{remark}

\begin{assumption}\label{ass:covstruct}
Let $X = AZB$, where $A\in\mathbb{R}^{n\times n}$ and $B\in\mathbb{R}^{d\times d}$ are deterministic matrices, each uniformly bounded in operator norm, that control the row/column dependence of the covariates. Furthermore, $Z$ is a mean-zero, unit-variance sub-Gaussian random matrix with i.i.d.\ entries that satisfies $\sup_{n,i,j} \|Z_{ij}\|_{\psi_2}<\infty$, where we recall the sub-Gaussian norm of a random variable $X$ is given by $\|X\|_{\psi_2} \coloneqq \inf\{c>0 \: : \: \mathbb{E}[\exp(X^2 / c^2)]\leq 2\}$. We refer to $Z$ as the \emph{latent variable matrix} from here onward.
\end{assumption}

\begin{assumption}\label{def:empspectral}
Let the eigenvalues of $A^\top A$ be $a_1,\dots a_n$ and those of $B^\top B$ be $b_1,\dots,b_d$. Suppose there exist probability measures $\mu_A$ and $\mu_B$ such that
\begin{equation}
\frac 1 n \sum_{i=1}^n\delta_{a_i}\implies \mu_A,\quad \frac 1 d \sum_{i=1}^d\delta_{b_i}\implies \mu_B,
\end{equation}
where $\implies$ denotes weak convergence of measures. These are referred to as the \emph{spectral distributions}.
\end{assumption}
Assumption~\ref{def:empspectral} is used only for the examples involving limiting spectra; it is not needed for the main universality results.
Next, we discuss the two main assumptions considered in the rest of the paper. 
\begin{assumption}\label{ass:lassoUniqueness}
There exists a constant $\kappa>0$, such that
\begin{equation}
\lambda_\mathrm{min}\left(\frac 1 n\mathrm{tr}(A^\top A) B_S^\top B_S\right)\geq \kappa.
\end{equation}
\end{assumption}

\begin{assumption}\label{ass:weirdstrong}
Let $\lambda>0$ be fixed. There exist constants $\eta\in(0,1)$ and $n_0$ such that, for all $n\geq n_0$,
\begin{equation}
\max_{j\in S^c}\left|\frac{1}{n}\mathrm{tr}(A^\top A) b_j^\top B_S h_n \right|\leq (1-\eta)\lambda,
\end{equation}
where $b_j=Be_j$ and $h_n=\beta_S^\star-\hat{\beta}^\mathrm{det}$.
\end{assumption}

Assumption~\ref{ass:lassoUniqueness} corresponds to the active-set eigenvalue condition used in the PDW framework of~\cite{wainwright2009sharp}. In our setting, it guarantees strong convexity of the deterministic objective and, together with Assumption~\ref{ass:weirdstrong}, ensures that the full lasso solution~\eqref{eq:lassoproblem} is unique with high probability.
In particular, Assumption~\ref{ass:weirdstrong} is a directional version of the strict dual-feasibility condition used in the PDW framework~\cite[Equation~(15)]{wainwright2009sharp}. It controls the mixing of active and inactive coordinates in the particular residual direction $h_n$ and allows us to transfer the universality result from the oracle solution to the full solution.
Assumption~\ref{ass:weirdstrong} can be viewed as a deterministic and directional strict dual-feasibility condition. It is directional and imposes control only on the residual direction $h_n$ which is selected by \eqref{eq:detlasso}. By contrast, the condition in~\cite[Equation~(15)]{wainwright2009sharp} considers the $\ell^\infty\to\ell^\infty$ operator-norm bound $\|X_{S^c}^\top X_S(X_S^\top X_S)^{-1}\|_{\infty\to\infty} \leq (1-\iota)$, where $\iota\in(0,1]$ is the incoherence parameter. Indeed, this controls \emph{all} possible subgradient directions and is therefore stronger than the directional condition used here. Exact signed-support recovery requires this uniform control, whereas the present argument only requires exclusion of false positives.

A pessimistic reader might argue that Assumption~\ref{ass:weirdstrong} is strong. However, we note that it is satisfied by a wide range of dense mixing matrices $A$ and $B$ that satisfy Assumption~\ref{ass:covstruct}. These include orthogonal transforms (Section~\ref{Sec:OrthogonalDesign}), scaled dense Gaussian matrices (Section~\ref{sec:gauss}), and uniformly drawn random orthogonal matrices (Section~\ref{Sec:HaarBoundedSpectrum}).

\subsection{Nontriviality of \eqref{eq:detlasso} via Karush--Kuhn--Tucker (KKT) conditions}\label{sec:nontrivialsolution}

In this section, we characterize precise conditions in which the deterministic solution \eqref{eq:detlasso} is nontrivial, i.e., $\hat{\beta}^\mathrm{det}\neq 0$.
%
Denote the objective function in \eqref{eq:detlasso} as
\begin{equation}\label{eq:determdquation}
F(\beta) \coloneqq \frac{1}{2}\left(\beta_S^\star-\beta\right)^\top Q_n (\beta_S^\star - \beta) + \lambda \|\beta\|_1,
\end{equation}
where $Q_n  \coloneqq  \tau_n B_S^\top B_S$ is positive semidefinite, and $\tau_n \coloneqq \frac{1}{n}\mathrm{tr}\left(A^\top A\right)$. The subdifferential of $F$ is
\begin{equation}
\partial F(\beta)
=
Q_n(\beta-\beta_S^\star)
+\lambda\partial\|\beta\|_1.
\end{equation}
Under Assumption~\ref{ass:lassoUniqueness}, the minimizer of $F$ is unique. Next, at $\beta = 0$, we have $\partial \|0\|_1 = [-1,1]^k$. Thus, the KKT condition that $0\in \partial F(0)$ is equivalent to the existence of a vector $\nu \in [-1,1]^k$ such that $Q_n\beta_S^\star = \lambda \nu$. This is possible if and only if $\|Q_n\beta_S^\star\|_\infty \leq \lambda$. Therefore,
\begin{equation}\label{eq:uniqueKKT}
\hat{\beta}^\mathrm{det}\neq0
\quad\text{if and only if}\quad
\left\|Q_n\beta_S^\star\right\|_\infty>\lambda.
\end{equation}

Note that the presented analysis is well-known (see~\cite{tibshirani2011solution}). Furthermore, as shown in \eqref{eq:uniqueKKT}, the nontriviality of $\hat{\beta}^\mathrm{det}$ is largely determined by $A$ and $B$ as well as the signal form. In the sequel, we show that this nontriviality holds in many cases of interest.

\subsection{Universality Results}

\begin{proposition}\label{prop:RestrictedProp}
Suppose Assumptions~\ref{ass:Sparse}, \ref{ass:covstruct}, and~\ref{ass:lassoUniqueness} hold for fixed $\lambda>0$. Then, for each $W\in\{Z,G\}$ we have
\begin{equation}\label{eq:propFirst}  \|\hat{\beta}^\mathrm{oracle}_S(W) -{\hat{\beta}}^\mathrm{det}\|_2\xrightarrow{\mathbb{P}} 0.
\end{equation}
\end{proposition}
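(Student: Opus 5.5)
We will prove the statement by comparing objectives, using strong convexity from Assumption~\ref{ass:lassoUniqueness}. Write $X_S = AZB_S$, which is $n\times k$, and $y = AZB_S\beta_S^\star + \xi$. Substituting into \eqref{eq:OracleObj}, the restricted objective becomes
\begin{equation*}
F_n^W(\beta) = \frac{1}{2n}\|AWB_S(\beta_S^\star-\beta)+\xi\|_2^2 + \lambda\|\beta\|_1 .
\end{equation*}
Expanding it gives
\begin{equation*}
F_n^W(\beta) = \frac12 (\beta_S^\star-\beta)^\top \widehat Q_n (\beta_S^\star-\beta) + \frac1n \xi^\top AWB_S(\beta_S^\star-\beta) + \frac{1}{2n}\|\xi\|_2^2 + \lambda\|\beta\|_1,
\end{equation*}
where $\widehat Q_n = \frac1n B_S^\top W^\top A^\top A W B_S$. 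The constant term $\frac{1}{2n}\|\xi\|_2^2$ does not affect the minimizer. Up to that constant, the target objective is $F$ from \eqref{eq:determdquation} with $Q_n=\tau_n B_S^\top B_S$.

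\textbf{Step 1: concentration of the quadratic form.} We show $\|\widehat Q_n - Q_n\|_{\mathrm{op}} \to 0$ in probability. Entrywise, $\mathbb{E}\widehat Q_n = \frac1n \mathrm{tr}(A^\top A) B_S^\top B_S = Q_n$, using that $W$ has i.i.d.\ unit-variance entries. Write $\widehat Q_n = M^\top \big(\frac1n W^\top A^\top A W\big) M$ with $M=B_S$. Because $\|B_S\|_{\mathrm{op}}\le \|B\|_{\mathrm{op}}$ is bounded, it suffices to control $\sup_{u\in \mathcal{U}} |u^\top(\frac1n W^\top A^\top A W - \tau_n I)u|$ for unit $u$ in the $k$-dimensional column space of $B_S$. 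For fixed unit $v=Mu/\|Mu\|$, the vector $Wv$ has i.i.d.\ sub-Gaussian coordinates with unit variance. Hanson--Wright then gives
\begin{equation*}
\mathbb{P}\left(\left|\tfrac1n v^\top W^\top A^\top A W v - \tau_n\right|>t\right) \le 2\exp\left(-c\, n\min(t^2,t)\right),
\end{equation*}
since $\|A^\top A\|_F^2\le n\|A\|_{\mathrm{op}}^4$ and $\|A^\top A\|_{\mathrm{op}}$ is bounded. An $\epsilon$-net over the $k$-dimensional sphere has size $e^{O(k)}$, and $k=o(n)$ by Assumption~\ref{ass:Sparse}, so a union bound yields the uniform bound with probability $1-e^{-cn}$. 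I expect this to be the main step, and the place where $k=o(n)$ is essential. Since $\widehat Q_n$ is not literally of the form $W_S^\top(\cdot)W_S$ when $B$ mixes columns, the net must be taken over the range of $B_S$ rather than over coordinates. This is exactly why I phrase the argument through $v=B_Su/\|B_Su\|$.

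\textbf{Step 2: the noise cross term.} Conditional on $W$, the vector $g = \frac1n B_S^\top W^\top A^\top \xi$ is Gaussian with covariance $\frac{\sigma^2}{n^2} B_S^\top W^\top A^\top A W B_S = \frac{\sigma^2}{n}\widehat Q_n$. Hence $\mathbb{E}\|g\|_2^2 = \frac{\sigma^2}{n}\mathrm{tr}(\widehat Q_n)\le \frac{\sigma^2 k}{n}\|\widehat Q_n\|_{\mathrm{op}} = O_{\mathbb P}(k/n) \to 0$, and so $\|g\|_2 = o_{\mathbb P}(1)$.

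\textbf{Step 3: the strong-convexity argument.} By Assumption~\ref{ass:lassoUniqueness}, $F$ is $\kappa$-strongly convex with minimizer $\hat\beta^{\mathrm{det}}$. Since $F(\beta_S^\star)\le \lambda\|\beta_S^\star\|_1 \le \lambda\sqrt{k}\alpha$, a priori bounds would grow with $k$. So instead of a uniform-in-$\beta$ comparison I would use first-order optimality directly. Let $\hat\beta=\hat\beta^{\mathrm{oracle}}_S$ and let $\hat z\in\partial\|\hat\beta\|_1$ satisfy the KKT condition $\widehat Q_n(\hat\beta-\beta_S^\star) - g + \lambda \hat z = 0$. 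Similarly, $Q_n(\hat\beta^{\mathrm{det}}-\beta_S^\star)+\lambda z^{\mathrm{det}}=0$. Subtract the two equations and take the inner product with $\Delta=\hat\beta-\hat\beta^{\mathrm{det}}$. Monotonicity of the subdifferential, $\langle \hat z - z^{\mathrm{det}},\Delta\rangle\ge 0$, then gives
\begin{equation*}
\Delta^\top \widehat Q_n \Delta \le \left|\Delta^\top(\widehat Q_n-Q_n)(\hat\beta^{\mathrm{det}}-\beta_S^\star)\right| + |\langle g,\Delta\rangle| .
\end{equation*}
By Step 1, $\lambda_{\min}(\widehat Q_n)\ge \kappa/2$ with high probability. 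We also have $\|\hat\beta^{\mathrm{det}}-\beta_S^\star\|_2\le \|\beta_S^\star\|_2=\alpha$; this follows from comparing $F(\hat\beta^{\mathrm{det}})\le F(\beta_S^\star)$ with strong convexity, or directly from the optimality inequality $\|h_n\|_{Q_n}^2\le \langle Q_n h_n,\beta_S^\star\rangle$, which together with $\kappa$ gives $\|h_n\|_2\le \|Q_n\|_{\mathrm{op}}\alpha/\kappa$. Combining these, $\frac{\kappa}{2}\|\Delta\|_2\le \|\widehat Q_n-Q_n\|_{\mathrm{op}}\cdot O(1) + \|g\|_2 \to 0$ in probability. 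This holds for either $W\in\{Z,G\}$, since Gaussian $G$ satisfies the same sub-Gaussian hypotheses.
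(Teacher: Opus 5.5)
Your proof is correct, but Steps 2 and 3 take a different route from the paper; Step 1 is the paper's Lemma~\ref{lem:active-conc} (your net over the range of $B_S$ is equivalent to its choice $p=q=B_Su$ in Lemma~\ref{lem:bilinear}).

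\textbf{How the paper argues.} It first establishes tightness of the oracle estimator by comparing with $\beta=0$: $\lambda\|\hat\beta^{\mathrm{oracle}}_S\|_1\le\|y\|_2^2/(2n)=O_{\mathbb P}(1)$, using the moment bound of Lemma~\ref{lem:moment}. It then shows the random objective is uniformly close to $F$ on a ball $\mathcal K_\delta$, with error controlled by $\|G_n\|_{\mathrm{op}}+\|g_n\|_2$. Finally it uses strong convexity of the \emph{deterministic} $F$ to get $\|\Delta\|_2^2\le\frac4\kappa\Delta_{n,\delta}$.

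\textbf{How you argue.} You subtract the two KKT systems and use monotonicity of $\partial\|\cdot\|_1$, together with strong convexity of the \emph{random} quadratic, $\lambda_{\min}(\widehat Q_n)\ge\kappa/2$ (the paper's Lemma~\ref{lem:Lemma14}). This gains two things. You need no a priori tightness of $\hat\beta^{\mathrm{oracle}}_S$, so Lemma~\ref{lem:moment} is never used. And your bound $\|\Delta\|_2\le\frac2\kappa\bigl(\|\widehat Q_n-Q_n\|_{\mathrm{op}}\|h_n\|_2+\|g\|_2\bigr)$ is linear in the perturbation sizes rather than square-root, so tracking $t$ in the net argument gives an explicit rate.

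For the noise term, the paper conditions on $\xi$, applies sub-Gaussian tails in $W$ with a net, and truncates on $\|\xi\|_2\le M\sqrt n$. Your second-moment computation is simpler. It is cleanest stated unconditionally: $\mathbb E\|g\|_2^2=\frac{\sigma^2}{n}\mathrm{tr}(Q_n)\le\sigma^2C_A^2C_B^2\,k/n$. That avoids mixing $\mathbb E$ with $O_{\mathbb P}$, and it uses only independence of $\xi$ and $W$ and $\mathbb E\xi\xi^\top=\sigma^2I_n$.

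\textbf{Two small corrections.}
\begin{itemize}
\item Your stated reason for avoiding a uniform comparison is off. A priori bounds should come from comparing with $\beta=0$, not with $\beta_S^\star$, and then they do not grow with $k$. This is harmless, since your route does not need them.
\item The claim $\|h_n\|_2\le\|\beta_S^\star\|_2=\alpha$ is false in general. Optimality against $0$ only gives $\|h_n\|_{Q_n}\le\|\beta_S^\star\|_{Q_n}$. With two strongly correlated active columns, the deterministic lasso can shift mass onto a cheaper correlated coordinate and land farther than $0$ from $\beta_S^\star$ in Euclidean norm. Likewise, $F(\hat\beta^{\mathrm{det}})\le F(\beta_S^\star)$ plus strong convexity yields only $\|h_n\|_2^2\le2\lambda\sqrt k\,\alpha/\kappa$, which grows with $k$. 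Your other justification is correct and is all Step~3 needs: $\kappa\|h_n\|_2^2\le\langle Q_nh_n,\beta_S^\star\rangle$, hence $\|h_n\|_2\le\|Q_n\|_{\mathrm{op}}\alpha/\kappa$. Alternatively, as in the paper, $\|h_n\|_2\le\alpha+C_A^2C_B^2\alpha^2/(2\lambda)$.
\end{itemize}
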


\begin{corollary}\label{cor:Corollary}
As a direct consequence of Proposition~\ref{prop:RestrictedProp}, the following hold
\begin{align}\label{eq:corollary}
\mathbb{E}\left|\|\hat{\beta}^\mathrm{oracle}_S(Z) - \beta^\star_S\|_2^2 - \|\hat{\beta}^\mathrm{oracle}_S(G) - \beta^\star_S\|_2^2\right| &\xrightarrow{} 0,\nonumber\\
\|\hat{\beta}^\mathrm{oracle}_S(Z) - \beta^\star_S\|_2^2 - \|\hat{\beta}^\mathrm{oracle}_S(G) - \beta^\star_S\|_2^2 &\xrightarrow{\mathbb{P}} 0. 
\end{align}
where $G$ is the Gaussian matrix with the same mean and covariance as $Z$. 
\end{corollary}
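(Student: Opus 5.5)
The plan is to deduce both statements from Proposition~\ref{prop:RestrictedProp} through an elementary algebraic identity, together with a uniform integrability argument for the $L^1$ claim. Write $u_W \coloneqq \hat{\beta}^\mathrm{oracle}_S(W)$ for $W\in\{Z,G\}$ and $h_n=\beta^\star_S-\hat\beta^\mathrm{det}$. For vectors $a,b$ we have
\begin{equation*}
\|a-\beta_S^\star\|_2^2-\|b-\beta_S^\star\|_2^2=(a-b)^\top(a+b-2\beta_S^\star),
\end{equation*}
and hence, by Cauchy--Schwarz,
\begin{equation*}
\left|\|u_Z-\beta_S^\star\|_2^2-\|u_G-\beta_S^\star\|_2^2\right|\le \|u_Z-u_G\|_2\left(\|u_Z-\beta^\star_S\|_2+\|u_G-\beta^\star_S\|_2\right).
\end{equation*}
The first factor tends to $0$ in probability by Proposition~\ref{prop:RestrictedProp} and the triangle inequality through $\hat\beta^\mathrm{det}$. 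For the second factor it suffices to show that $\|h_n\|_2=O(1)$, since then $\|u_W-\beta^\star_S\|_2\le \|u_W-\hat\beta^\mathrm{det}\|_2+\|h_n\|_2=O_\mathbb{P}(1)$.

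To bound $h_n$ I will compare $F(\hat\beta^\mathrm{det})$ with $F(0)$ rather than with $F(\beta^\star_S)$; the latter comparison would bring in $\|\beta^\star\|_1\le\sqrt{k}\,\alpha$, which may diverge. Optimality gives
\begin{equation*}
\tfrac12 h_n^\top Q_n h_n\le F(\hat\beta^\mathrm{det})\le F(0)=\tfrac12\tau_n\|B_S\beta^\star_S\|_2^2\le \tfrac12\|A\|_\mathrm{op}^2\|B\|_\mathrm{op}^2\alpha^2 .
\end{equation*}
Here we use $\tau_n\le\|A\|_\mathrm{op}^2$ and Assumption~\ref{ass:covstruct}. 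Assumption~\ref{ass:lassoUniqueness} gives $h_n^\top Q_nh_n\ge\kappa\|h_n\|_2^2$, so $\|h_n\|_2$ is uniformly bounded. This proves the second line of \eqref{eq:corollary}.

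For the first line, convergence in probability upgrades to $L^1$ convergence by Vitali's theorem once the family $\{\|u_W-\beta^\star_S\|_2^2\}_n$ is uniformly integrable for each $W$. I expect this to be the main obstacle, and I will obtain it from a uniform bound on a moment of order two. Comparing the oracle objective at $u_W$ with its value at $\beta=0$ (which is feasible) gives $\lambda\|u_W\|_2\le\lambda\|u_W\|_1\le \|y\|_2^2/(2n)$. Therefore
\begin{equation*}
\|u_W-\beta_S^\star\|_2^2\le 2\alpha^2+\tfrac{1}{2\lambda^2}\left(\|y\|_2^2/n\right)^2,
\end{equation*}
and it suffices to show $\sup_n\mathbb{E}(\|y\|_2^2/n)^4<\infty$. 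Since $\|y\|_2^2/n\le 2\|AW v\|_2^2/n+2\|\xi\|_2^2/n$ with $v=B\beta^\star$ and $\|v\|_2\le\|B\|_\mathrm{op}\alpha$, and since $\|AWv\|_2^2\le\|A\|_\mathrm{op}^2\|Wv\|_2^2$, the bound reduces to two facts. First, $Wv$ has i.i.d.\ entries with sub-Gaussian norm $O(\|v\|_2)$ by Hoeffding-type bounds, using $\sup\|Z_{ij}\|_{\psi_2}<\infty$; hence $\|Wv\|_2^2/n$ is an average of $n$ i.i.d.\ sub-exponential variables with bounded $\psi_1$-norm, and all its moments are bounded uniformly in $n$. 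Second, $\|\xi\|_2^2/n\sim\sigma^2\chi^2_n/n$ has bounded moments. Together these give the fourth-moment bound, hence uniform integrability, and Vitali's theorem together with the second line of \eqref{eq:corollary} yields the first.
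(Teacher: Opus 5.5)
Your proof is correct and uses essentially the same ingredients as the paper. The in-probability statement comes from Proposition~\ref{prop:RestrictedProp} together with a uniform bound on $h_n$. The $L^1$ statement comes from the fourth-moment bound on $\|y(W)\|_2^2/n$ (as in Lemma~\ref{lem:moment}) combined with Vitali's theorem. The differences are only cosmetic: you compare the $Z$ and $G$ errors directly through the polarization identity, whereas the paper anchors each to the deterministic value $\|\hat{\beta}^\mathrm{det}-\beta^\star_S\|_2^2$ and then applies the triangle inequality. You also bound $h_n$ through $\kappa$, while the paper uses $\lambda\|\hat{\beta}^\mathrm{det}\|_1\le F(0)$.
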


In Proposition~\ref{prop:RestrictedProp}, we simply claim that the restricted lasso solution converges to the deterministic solution in \eqref{eq:detlasso}, agnostic to the latent variable matrix distribution. Corollary~\ref{cor:Corollary} extends the result to the SE. We use this universality phenomenon as a bridge to extend to the universality of the full estimator \eqref{eq:lassoproblem}. The proofs of Proposition~\ref{prop:RestrictedProp} and consequently Corollary~\ref{cor:Corollary} are discussed in detail in Section~\ref{Sec:SectionIII}. 

\begin{theorem}\label{thm:lassoFullUniversality}
Suppose Assumptions~\ref{ass:Sparse}, \ref{ass:covstruct}, \ref{ass:lassoUniqueness}, and~\ref{ass:weirdstrong} hold. Then, for each $W\in\{Z,G\}$ and fixed $\lambda>0$, with probability tending to one, the lasso has a unique solution and
\begin{equation}\label{eq:Theorem}  \hat{\beta}(W)=\hat{\beta}^\mathrm{oracle}(W).
\end{equation}
\end{theorem}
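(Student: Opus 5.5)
We follow the primal--dual witness route of~\cite{wainwright2009sharp}. The plan is to take the oracle solution $\hat\beta^{\mathrm{oracle}}(W)$ of \eqref{eq:OracleObj} and build from it a full dual certificate. Its KKT conditions on $S$ give $\hat z_S\in\partial\|\hat\beta^{\mathrm{oracle}}_S\|_1$ with $\frac1n X_S^\top(y-X_S\hat\beta^{\mathrm{oracle}}_S)=\lambda\hat z_S$. On $S^c$ we define
\begin{equation*}
\hat z_{S^c}\coloneqq \frac{1}{\lambda n}X_{S^c}^\top\bigl(y-X_S\hat\beta^{\mathrm{oracle}}_S\bigr).
\end{equation*}
The pair $(\hat\beta^{\mathrm{oracle}},\hat z)$ satisfies the KKT conditions of the full problem \eqref{eq:lassoproblem} whenever $\|\hat z_{S^c}\|_\infty\le1$. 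The standard PDW lemma then says that strict dual feasibility $\|\hat z_{S^c}\|_\infty<1$, together with invertibility of $X_S^\top X_S$, implies that every lasso solution is supported on $S$. Hence the lasso solution is unique and equals $\hat\beta^{\mathrm{oracle}}$. Two things therefore need proving with probability tending to one: (i) $\lambda_{\min}(\frac1n X_S^\top X_S)\ge\kappa/2$, and (ii) $\max_{j\in S^c}|\hat z_j|\le 1-\eta/2$.

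For (i), write $\frac1n X_S^\top X_S=\frac1n B_S^\top Z^\top A^\top A Z B_S$, whose expectation is $\tau_n B_S^\top B_S$. A Hanson--Wright inequality combined with an $\varepsilon$-net over the $k$-dimensional unit sphere of the column space gives deviation of order $\sqrt{k/n}+k/n$ in operator norm. This vanishes because $k=o(n)$, and Assumption~\ref{ass:lassoUniqueness} then yields (i). This step is presumably also part of the proof of Proposition~\ref{prop:RestrictedProp} and can be reused.

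For (ii), let $\hat h\coloneqq\beta_S^\star-\hat\beta^{\mathrm{oracle}}_S$, so that $\lambda\hat z_j=\frac1n b_j^\top Z^\top A^\top A Z B_S\hat h+\frac1n x_j^\top\xi$. We split this as
\begin{equation*}
\lambda\hat z_j=\tau_n b_j^\top B_S h_n+\tau_n b_j^\top B_S(\hat h-h_n)+\frac1n b_j^\top\bigl(Z^\top A^\top AZ-\mathrm{tr}(A^\top A)I_d\bigr)B_S\hat h+\frac1n x_j^\top\xi .
\end{equation*}
The first term is at most $(1-\eta)\lambda$ by Assumption~\ref{ass:weirdstrong}. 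The second is at most $\tau_n\|B\|_{\mathrm{op}}^2\|\hat h-h_n\|_2$, which tends to $0$ in probability by Proposition~\ref{prop:RestrictedProp} and uniformly in $j$ since the bound does not depend on $j$. The noise term is Gaussian conditionally on $Z$, with variance $\sigma^2\|x_j\|_2^2/n^2=O_{\mathbb P}(1/n)$ uniformly over $j$. A union bound over $d=O(n)$ coordinates makes it $O_{\mathbb P}(\sqrt{\log n/n})$.

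The main obstacle is the third term, uniformly over $j\in S^c$, because $\hat h$ depends on $Z$. We would first replace $\hat h$ by the deterministic $h_n$; the resulting error is bounded by $\|\frac1n B^\top(Z^\top A^\top AZ-\mathrm{tr}(A^\top A)I)B_S\|_{\mathrm{op}}\|\hat h-h_n\|_2$. A direct bound on the full $d\times k$ operator norm is only $O_{\mathbb P}(1)$, not small, so it is better to take the supremum over $\hat h$ in a small ball of the $k$-dimensional span. For fixed $j$, the map $v\mapsto \frac1n b_j^\top(\cdots)B_S v$ is a quadratic form in $\mathrm{vec}(Z)$, with matrix $(B_S v b_j^\top)\otimes A^\top A/n$ up to symmetrization. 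Its Frobenius norm is $O(1/\sqrt n)$ and its operator norm is $O(1/n)$. Hanson--Wright therefore gives tails of the form $\exp(-c\min(nt^2,nt))$. A net of size $e^{O(k)}$ over the unit sphere of $\mathbb R^k$ and a union bound over the $d$ coordinates then give a uniform bound of order $\sqrt{(k+\log d)/n}\to0$. Applying this to $h_n$ and to the unit ball, scaled by $\|\hat h-h_n\|_2=o_{\mathbb P}(1)$, shows the third term is $o_{\mathbb P}(1)$ uniformly. Combining the four bounds gives $\max_j|\hat z_j|\le1-\eta+o_{\mathbb P}(1)$, which proves (ii).

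The argument uses only sub-Gaussianity of the entries, so it applies verbatim to $W=G$. This concludes the plan for Theorem~\ref{thm:lassoFullUniversality}.
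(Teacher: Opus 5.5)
Your proposal is correct and is essentially the paper's argument. Both use a primal--dual witness built from the oracle solution, Hanson--Wright with a net over $\mathbb{S}^{k-1}$ and a union bound over $j\in S^c$ (Lemma~\ref{lem:inactive-conc}), Proposition~\ref{prop:RestrictedProp} to pass from $\hat h$ to $h_n$, and Lemma~\ref{lem:Lemma14} for uniqueness. The differences are cosmetic: the paper centers at $h_n$ rather than at $\tau_n b_j^\top B_S\hat h$, conditions on $\xi$ rather than on $Z$ for the noise term, and verifies uniqueness directly instead of citing the PDW lemma.

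One minor remark: the ``direct'' bound you dismiss actually suffices. An $O_{\mathbb P}(1)$ operator norm multiplied by $\|\hat h-h_n\|_2=o_{\mathbb P}(1)$ is already $o_{\mathbb P}(1)$, and this is exactly how the paper treats its middle term, via $\max_{j\in S^c}\|\frac1n X_j^\top X_S\|_2=O_{\mathbb P}(1)$.
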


\begin{corollary}\label{cor:Corollary2}
As a direct consequence of  Corollary~\ref{cor:Corollary} and Theorem~\ref{thm:lassoFullUniversality}, the following hold
\begin{align}\label{eq:Corollary2}
 \mathbb{E}\left|\|\hat{\beta}(Z) - \beta^\star\|_2^2 - \|\hat{\beta}(G) - \beta^\star\|_2^2\right|&\xrightarrow{} 0,\nonumber\\
  \|\hat{\beta}(Z) - \beta^\star\|_2^2 - \|\hat{\beta}(G) - \beta^\star\|_2^2  &\xrightarrow{\mathbb{P}} 0.  
\end{align}
where $G$ is the Gaussian matrix with the same mean and covariance as $Z$. 
\end{corollary}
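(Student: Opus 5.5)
The statement to prove is Corollary~\ref{cor:Corollary2}. The plan is to combine Theorem~\ref{thm:lassoFullUniversality} with Corollary~\ref{cor:Corollary} through an event decomposition. For $W\in\{Z,G\}$, let $E_W$ be the event that the lasso has a unique solution and $\hat\beta(W)=\hat\beta^{\mathrm{oracle}}(W)$. By Theorem~\ref{thm:lassoFullUniversality}, $\mathbb{P}(E_W)\to1$. On $E_W$ the oracle solution vanishes off $S$, so
\begin{equation*}
\|\hat\beta(W)-\beta^\star\|_2^2=\|\hat\beta^{\mathrm{oracle}}_S(W)-\beta^\star_S\|_2^2 .
\end{equation*}
Write $D_W := \|\hat\beta(W)-\beta^\star\|_2^2-\|\hat\beta^{\mathrm{oracle}}_S(W)-\beta^\star_S\|_2^2$. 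Then $D_W=0$ on $E_W$, and hence $D_W\xrightarrow{\mathbb{P}}0$. Decomposing the full-lasso difference as $D_Z-D_G$ plus the oracle difference, the in-probability statement follows from the second line of Corollary~\ref{cor:Corollary}. Alternatively, each oracle error converges to the deterministic quantity $\|\beta^\star_S-\hat\beta^{\mathrm{det}}\|_2^2$ by Proposition~\ref{prop:RestrictedProp}, which is bounded because $\|\hat\beta^{\mathrm{det}}\|_2$ is controlled (see the next paragraph).

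The convergence in $L^1$ is the main obstacle. Convergence in probability must be upgraded using uniform integrability, so we need $\sup_n\mathbb{E}\|\hat\beta(W)-\beta^\star\|_2^4<\infty$, or a deterministic bound. I would use the basic inequality: comparing the lasso objective at $\hat\beta$ and at $0$ gives
\begin{equation*}
\lambda\|\hat\beta\|_1\le \frac{1}{2n}\|y\|_2^2 .
\end{equation*}
This yields $\|\hat\beta(W)\|_2\le\|\hat\beta(W)\|_1\le \|y\|_2^2/(2n\lambda)$ for every minimizer, including non-unique ones, and the same bound holds for the oracle estimator. It remains to bound moments of $\|y\|_2^2/n$. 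We have
\begin{equation*}
\|y\|_2^2/n\le 2\|A\|_{\mathrm{op}}^2\|ZB\beta^\star\|_2^2/n+2\|\xi\|_2^2/n .
\end{equation*}
Here $ZB\beta^\star$ has i.i.d.\ sub-Gaussian entries with $\psi_2$-norm of order $\|B\|_{\mathrm{op}}\alpha$, by Assumption~\ref{ass:covstruct} and the sub-Gaussian Hoeffding bound. Consequently $\|ZB\beta^\star\|_2^2/n$ has uniformly bounded moments of all orders, and the same holds for $\|\xi\|_2^2/n$. Therefore $\sup_n\mathbb{E}\|\hat\beta(W)-\beta^\star\|_2^{2p}<\infty$ for some $p>1$, uniformly in $n$, for both $W=Z$ and $W=G$.

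To conclude, the random variable $|\|\hat\beta(Z)-\beta^\star\|_2^2-\|\hat\beta(G)-\beta^\star\|_2^2|$ tends to $0$ in probability and is bounded in $L^p$ with $p>1$. It is therefore uniformly integrable, and Vitali's theorem gives convergence of the expectation to zero. Equivalently, one can bound $\mathbb{E}|D_W|\le \mathbb{E}[|D_W|\mathbf{1}_{E_W^c}]\le \|D_W\|_{L^2}\,\mathbb{P}(E_W^c)^{1/2}\to0$ by Cauchy--Schwarz and then add the first line of Corollary~\ref{cor:Corollary}. Either route handles the possibility of non-unique solutions off $E_W$, because the moment bound holds for any measurable selection of a minimizer.
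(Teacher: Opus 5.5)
Your proposal is correct and follows essentially the paper's route. $D_W$ vanishes on the event of Theorem~\ref{thm:lassoFullUniversality}; Cauchy--Schwarz with the fourth-moment bound of Lemma~\ref{lem:moment} (which you re-derive by the same comparison with $\beta=0$) gives $\mathbb{E}|D_W|\to 0$; and the triangle inequality with Corollary~\ref{cor:Corollary} finishes. The only difference is cosmetic: the paper gets the in-probability statement from the $L^1$ statement via Markov's inequality, whereas you obtain it directly from $D_W\xrightarrow{\mathbb{P}}0$ and the second line of Corollary~\ref{cor:Corollary}.
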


 Theorem~\ref{thm:lassoFullUniversality} and Corollary~\ref{cor:Corollary2} yield the universality of the lasso solution and the SE in the high-dimensional sparse regime for the linearly dependent covariate model. The proofs are given in Section~\ref{Sec:SectionIV}.

\section{Instantiations of the Universality Results}

In this section, we first verify that all the discussed signals satisfy the assumptions and then analyze the proposed $A$ and $B$ matrix classes.

\subsection{Verification of Assumptions for Example Signal Profiles}

We verify that the signals defined in Remark~\ref{rem:remarkaboutsignals} satisfy Assumption~\ref{ass:Sparse}. Then, we calculate some useful characteristics for each signal. For the geometrically decaying profile \eqref{eq:geomsig} we have
\begin{equation}
\begin{aligned}
\|\beta^\star\|_2^2=\|\beta_S^\star\|_2^2
&=
\alpha^2\frac{1-r^2}{1-r^{2k}}
\sum_{i=1}^k r^{2(i-1)}
=\alpha^2.
\end{aligned}
\end{equation}
Next, we consider the anchor-diffuse profile signal \eqref{eq:anchorsig} and observe that
\begin{align}
\left\lVert \beta^\star \right\rVert_2^2
&=
\alpha^2 \vartheta
+
\alpha^2
\frac{(1-\vartheta)(1-r^2)}
     {1-r^{2(k-1)}}
\sum_{i=2}^{k} r^{2(i-2)}
=
\alpha^2 \vartheta+\alpha^2(1-\vartheta)
=
\alpha^2.
\end{align}

Finally, for the normalized power profile signal \eqref{eq:powersig}, we have
\begin{equation}
\|\beta^\star\|_2^2=\|\beta_S^\star\|_2^2
=
\frac{\alpha^2}{\sum_{j=1}^k j^{-2p}}
\sum_{i=1}^ki^{-2p}
=\alpha^2.
\end{equation}
Thus, when $k=o(n)$, all three signal profiles satisfy Assumption~\ref{ass:Sparse}.


\subsection{Orthogonal Transform Mixing Matrices}\label{Sec:OrthogonalDesign}
As a specific example, we consider the deterministic matrix family of orthogonal transforms and verify that they satisfy the assumptions. Let $B=U_BD_B$ where $U_B$ is orthogonal and $D_B=\mathrm{diag}(d_1,...,d_d)$ with bounded spectrum. On the support set we define $q_\ell \coloneqq \tau_nd_{s_\ell}^2$ for $\ell=1,\dots,k$. Clearly
$B^\top B = D_B^\top U_B^\top U_BD_B=D_B^2$.
Therefore we have $Q_n=\mathrm{diag}(q_1,\dots,q_k)$. Therefore, \eqref{eq:determdquation} becomes
\begin{equation}
F(\beta)
=
\sum_{\ell=1}^k
\left[
\frac{q_\ell}{2}
(\beta_{s_\ell}^\star-\beta_\ell)^2
+\lambda|\beta_\ell|
\right],
\end{equation}
which by soft-thresholding \cite{donoho1995noising}, results in 
\begin{equation}\label{eq:labelIneednow}
\hat{\beta}_\ell^\mathrm{det}=\mathrm{sign}(\beta_{s_\ell}^\star)\left(\left|\beta_{s_\ell}^\star\right|-\frac{\lambda}{q_\ell}\right)_+. 
\end{equation}
By rewriting \eqref{eq:uniqueKKT}, for \eqref{eq:labelIneednow} we obtain
\begin{equation}
\hat{\beta}^\mathrm{det}\neq0
\quad\text{if and only if}\quad
\max_{1\leq \ell\leq k}q_\ell\left|\beta_{s_\ell}^\star\right|>\lambda.
\end{equation}

In the high-dimensional regime, suppose $k\to\infty$ and there is a bounded sequence $(\bar q_\ell)_{\ell\geq1}$ such that $\max_{1\leq\ell\leq k}|q_\ell-\bar q_\ell|\to0$. Then, for all signals defined in Remark~\ref{rem:remarkaboutsignals}, $\max_{1\leq \ell\leq k}q_\ell\left|\beta_{s_\ell}^\star\right|$ converges to the following thresholds:
\begin{itemize}
  \item 
    $\Lambda_\mathrm{geo}=\alpha\sqrt{1-r^2}
\sup_{\ell\geq1}\bar q_\ell r^{\ell-1}$,
\item 
$\Lambda_{\mathrm{ad}}=\alpha\max\left\{\sqrt{\vartheta}\bar q_{1},\sqrt{(1-\vartheta)(1-r^2)}\sup_{\ell\geq 2}\bar{q}_\ell r^{\ell-2}\right\}$,
\item 
$\Lambda_{\mathrm{pow}}=\frac{\alpha}{\sqrt{\sum_{i=1}^\infty i^{-2p}}}
\sup_{\ell\geq1}\bar q_\ell\ell^{-p}$.
\end{itemize}

For each signal, $\lambda<\Lambda$ therefore guarantees $\hat{\beta}^\mathrm{det}\neq0$ for all sufficiently large $n$ (see Section~\ref{sec:nontrivialsolution}). Furthermore, we have that $(B^\top B)_{S^c,S}=0$. Thus, Assumption~\ref{ass:weirdstrong} holds.
%
Assumption~\ref{ass:lassoUniqueness} also holds as soon as $\inf_n \tau_n>0$, and $\inf_n\min_{1\leq \ell\leq k}d_{s_\ell}>0$. Orthogonal transform matrices include many matrices of interest including discrete cosine/sine transforms (DCTs/DSTs) and orthogonal discrete wavelet transforms. These play important roles in various scientific fields such as digital signal processing~\cite{ahmed1974discrete,britanak2010discrete} and in imaging applications like magnetic resonance imaging~\cite{lustig2007sparse}.


\subsection{Conditions to Consider Random  Mixing Matrices}\label{Sec:Transconds}

All findings in this paper are stated for deterministic $A$ and $B$ matrices. The randomness in the framework \eqref{eq:lassoproblem} then stems only from $Z$ and the noise vector $\xi$. Here, we record the conditions under which \emph{random} mixing matrices $A$ and $B$ can be incorporated via a conditioning argument. Informally, we first condition on the matrices and signal, and then apply the deterministic results uniformly over a class on which all constants in the assumptions are controlled.

Formally, consider the tuple $\Theta_n\coloneqq(A,B,S,\beta^\star)$, independent of the latent matrix, its Gaussian counterpart, and the noise vector, and define the $\sigma$-algebra $\mathcal F_n\coloneqq\sigma(\Theta_n)$. Fix constants $C_A,C_B,\kappa>0$ and $\eta\in(0,1)$, and a deterministic sequence $\bar k_n=o(n)$. Let $\mathcal C_n$ be the class of tuples defined by
\begin{equation}
\mathcal C_n \coloneqq
\left\{
(A,B,S,\beta^\star)\ :\ 
\begin{aligned}
&S=\operatorname{supp}(\beta^\star),\quad |S|\leq \bar k_n,\quad
  \|\beta^\star\|_2=\alpha,\\
&\|A\|_{\mathrm{op}}\leq C_A,\quad
  \|B\|_{\mathrm{op}}\leq C_B,\quad
  \lambda_{\min}(Q_n)\geq \kappa,\\
&\max_{j\in S^c}
  \bigl|\tau_n b_j^\top B_S h_n\bigr|
  \leq (1-\eta)\lambda
\end{aligned}
\right\}.
\end{equation}
and define $\mathcal G_n\coloneqq\{\Theta_n\in\mathcal C_n\}$. We discuss the following proposition as the bridge between random and deterministic matrices in the framework.

\begin{proposition}\label{prop:conditional-transfer}
Suppose the signal bounds $\|\beta^\star\|_2=\alpha$ almost surely and $\mathbb P(\mathcal G_n)\to1$. Then the convergence-in-probability conclusions of Proposition~\ref{prop:RestrictedProp} and Theorem~\ref{thm:lassoFullUniversality} hold for random $A$ and $B$. The expectation conclusions of Corollaries~\ref{cor:Corollary} and~\ref{cor:Corollary2} also hold under the additional condition
\begin{equation}\label{eq:preliminary-moment-condition}
\sup_n \mathbb{E}\left[(1+\|A\|_\mathrm{op}+\|B\|_\mathrm{op})^{16}\right]<\infty.
\end{equation}  
\end{proposition}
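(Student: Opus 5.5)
The plan is a conditioning argument. Since $\Theta_n$ is independent of $(Z,G,\xi)$, we condition on $\mathcal F_n$. On $\mathcal G_n$ the realized tuple $\Theta_n=\theta$ is a deterministic tuple in $\mathcal C_n$, and for it Assumptions~\ref{ass:Sparse}, \ref{ass:covstruct}, \ref{ass:lassoUniqueness}, and~\ref{ass:weirdstrong} hold with the fixed constants $C_A,C_B,\kappa,\eta$ and sparsity $\le\bar k_n$. The deterministic results (Proposition~\ref{prop:RestrictedProp}, Theorem~\ref{thm:lassoFullUniversality}) are pointwise in a sequence of tuples, so I first upgrade them to \emph{uniform} statements over the class. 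For each $\epsilon>0$ define the bad events
\[
E_n(\theta)=\bigl\{\|\hat\beta^{\mathrm{oracle}}_S(W)-\hat\beta^{\mathrm{det}}\|_2>\epsilon\bigr\}\cup\bigl\{\hat\beta(W)\neq\hat\beta^{\mathrm{oracle}}(W)\ \text{or not unique}\bigr\}.
\]
I then claim that
\[
p_n\coloneqq\sup_{\theta\in\mathcal C_n}\mathbb P\bigl(E_n(\theta)\bigr)\to0 .
\]
The route is a subsequence argument. If the claim failed, there would be $\delta>0$ and, along a subsequence, tuples $\theta_n\in\mathcal C_n$ with $\mathbb P(E_n(\theta_n))\ge\delta$. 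Each such sequence $(\theta_n)$ satisfies all the hypotheses of the deterministic results, so those results give $\mathbb P(E_n(\theta_n))\to0$, a contradiction. This needs the deterministic results to be stated for arbitrary sequences obeying the assumptions with fixed constants, which they are. A small technicality is that they are stated along the full sequence $n$; I pad the subsequence with arbitrary admissible tuples to handle it. Measurability of $\theta\mapsto\mathbb P(E_n(\theta))$ follows from standard arguments, since the estimators are measurable selections.

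Given the uniform claim, the in-probability conclusions follow directly:
\[
\mathbb P(E_n)\le\mathbb E\bigl[\mathbf 1_{\mathcal G_n}\,\mathbb P(E_n\mid\mathcal F_n)\bigr]+\mathbb P(\mathcal G_n^c)\le p_n+\mathbb P(\mathcal G_n^c)\to0 .
\]
By independence, $\mathbb P(E_n\mid\mathcal F_n)=\mathbb P(E_n(\theta))|_{\theta=\Theta_n}$, which justifies the first inequality. This gives Proposition~\ref{prop:RestrictedProp} and Theorem~\ref{thm:lassoFullUniversality} for random $A,B$, where $\hat\beta^{\mathrm{det}}$ is now $\mathcal F_n$-measurable. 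The probability halves of the corollaries follow as before from the triangle inequality through $\|\hat\beta^{\mathrm{det}}-\beta_S^\star\|_2^2$. That quantity is the same for $Z$ and $G$, and it is bounded because $F(\hat\beta^{\mathrm{det}})\le F(\beta^\star_S)$ gives $\lambda\|\hat\beta^{\mathrm{det}}\|_1\le\lambda\|\beta^\star_S\|_1$.

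The expectation statements need uniform integrability of the squared error differences, and I expect this to be the main obstacle. Off $\mathcal G_n$ the constants are uncontrolled, so I need an a priori bound. From $L(\hat\beta)\le L(0)$ we get $\lambda\|\hat\beta\|_1\le\frac1{2n}\|y\|_2^2$, and hence
\[
\|\hat\beta-\beta^\star\|_2^2\lesssim\alpha^2+\lambda^{-2}n^{-2}\|y\|_2^4 .
\]
Next,
\[
\tfrac1n\|y\|_2^2\lesssim\|A\|_{\mathrm{op}}^2\|B\|_{\mathrm{op}}^2\alpha^2\,\bigl\|\tfrac1{\sqrt n}Z\bigr\|_{\mathrm{op}}^2+\tfrac1n\|\xi\|_2^2 .
\]
Since $\Theta_n$ is independent of $Z$, Hölder's inequality separates the factors, and sub-Gaussian operator-norm moment bounds control $\mathbb E\|Z/\sqrt n\|_{\mathrm{op}}^{p}$ uniformly in $n$. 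To get a bounded second moment of the squared error, i.e., uniform integrability, we need roughly the eighth power of $\|A\|_{\mathrm{op}}\|B\|_{\mathrm{op}}$. By Cauchy--Schwarz this is controlled by the 16th moment in \eqref{eq:preliminary-moment-condition}, which explains the exponent. With uniform integrability in hand, convergence in probability upgrades to $L^1$ convergence, and the oracle estimator is handled the same way.
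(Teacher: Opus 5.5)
Your proposal follows the paper's route. You make $\sup_{\theta\in\mathcal C_n}\mathbb P_\theta(\text{failure})\to0$ by the subsequence/contradiction argument and use the tower property to absorb $\mathcal G_n^c$. For the expectation statements you bound the eighth moment of $\|y\|_2^2/n$, using independence of $\Theta_n$ and Cauchy--Schwarz on the 16th moments. You bound $\|AZB\beta^\star\|_2$ via $\|Z/\sqrt n\|_{\mathrm{op}}$ rather than the paper's conditional Khintchine bound on $\|WB\beta^\star\|_2$; this is equally valid since $d/n\to\gamma$.

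One step is wrong as written. Comparing with $\beta^\star_S$ gives only $\|\hat\beta^{\mathrm{det}}\|_1\le\|\beta^\star_S\|_1\le\sqrt{k}\,\alpha$, which is unbounded when $k\to\infty$. Compare with $0$ instead, as in the proof of Lemma~\ref{lem:Lemma14}:
\[
\lambda\|\hat\beta^{\mathrm{det}}\|_1\le F(0)=\tfrac{\tau_n}{2}\|B_S\beta^\star_S\|_2^2\le\tfrac12 C_A^2C_B^2\alpha^2 \quad\text{on } \mathcal G_n .
\]
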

The proof is given in Appendix~\ref{app:PropTransProof}.


\subsection{Scaled Dense Gaussian Mixing Matrices}\label{sec:gauss}
As a specific example, we consider the random matrix family of scaled dense Gaussian mixing matrices and verify that they satisfy the hypotheses of Proposition~\ref{prop:conditional-transfer}. Let $A$ and $B$ be independent matrices with entries drawn as
\begin{equation} A_{ij}\overset{\mathrm{i.i.d.}}{\sim}\mathcal{N}\left(0,\frac{\zeta}{n}\right),\quad B_{ij}\overset{\mathrm{i.i.d.}}{\sim}\mathcal{N}\left(0,\frac{\omega}{d}\right),
\end{equation}
 where $\zeta,\omega>0$. Suppose $(S,\beta^\star)$ is independent of $(A,B)$. Under these conditions, write $A=\sqrt{\frac{\zeta}{n}}G_A$ and $B_S=\sqrt{\frac{\omega}{d}}G_{B,S}$, where $G_A\in\mathbb{R}^{n\times n}$ and $G_{B,S}\in\mathbb{R}^{d\times k}$ have standard i.i.d.\ Gaussian entries. Then
\begin{equation}
\begin{aligned}
 \tau_n = \frac{1}{n}\|A\|_F^2 = \frac{\zeta}{n^2}\sum_{i,j}(G_A)_{ij}^2,   
\end{aligned}
\end{equation}
where, by the law of large numbers, we can deduce 
\begin{equation}\label{eq:tauLLN}
\tau_n\xrightarrow{\mathbb{P}}\zeta.
\end{equation}
Furthermore, by~\cite[Theorem~4.6.1]{vershynin2018hdp}, we have
\begin{align}\label{eq:opnormLLN}
\left\|B_S^\top B_S-\omega I_k\right\|_\mathrm{op}&= \omega\left\|\frac{1}{d}G_{B,S}^\top G_{B,S}-I_k\right\|_\mathrm{op}
=O_\mathbb{P}\left(\frac{k}{d}+\sqrt{\frac{k}{d}}\right)= o_\mathbb{P}(1).
\end{align}
Therefore, by \eqref{eq:tauLLN}, and \eqref{eq:opnormLLN}, we have $\|Q_n-\zeta\omega I_k\|_\mathrm{op}\xrightarrow{\mathbb{P}}0$, so
$\left\|Q_n\beta_S^\star\right\|_\infty
=
\zeta\omega\left\|\beta_S^\star\right\|_\infty
+o_{\mathbb{P}}(1)$.
This leads to the following thresholds for each signal type:
\begin{itemize}
  \item
         $\Lambda_\mathrm{geo}=\zeta\omega\alpha\sqrt{1-r^2},$
    
\item 
$\Lambda_{\mathrm{ad}}=\zeta\omega\alpha\sqrt{\vartheta},$

\item 
$\Lambda_{\mathrm{pow}}=\zeta\omega\frac{\alpha}{\sqrt{\sum_{i=1}^\infty i^{-2p}}}$ .
\end{itemize}
If $\lambda<\Lambda$, then $\hat{\beta}^\mathrm{det}\neq0$ with probability tending to one. Next, we show that this matrix satisfies Proposition~\ref{prop:conditional-transfer}. Consider the same event $\mathcal{G}_n$ discussed in Section~\ref{Sec:Transconds}. Standard Gaussian operator-norm tail bounds~\cite[Theorem~4.4.1]{vershynin2018hdp} imply \eqref{eq:preliminary-moment-condition}. By Weyl's inequality
(see \cite[Theorem 8.1]{bhatia2007perturbation}), we know $\lambda_\mathrm{min}(Q_n)\xrightarrow{\mathbb{P}} \zeta\omega$. Hence, when $0<\kappa<\zeta \omega$, Assumption~\ref{ass:lassoUniqueness} holds. So, the last step needed to verify $\mathbb{P}(\mathcal{G}_n)\to 1$ which results in the verification of Proposition~\ref{prop:conditional-transfer} for this class of matrices is to show
\begin{equation}\label{eq:eqprovegauss}
\max_{j\in S^c}
\left|\tau_n b_j^\top B_Sh_n\right|
=O_{\mathbb{P}}\left(\sqrt{\frac{\log d}{d}}\right).
\end{equation}
This is proved in Appendix~\ref{app:appB}. Therefore, this class of matrices satisfies Proposition~\ref{prop:conditional-transfer}.

\subsection{Scaled Uniformly Random Orthogonal Mixing Matrices}\label{Sec:HaarBoundedSpectrum}
As a specific example, we consider the random matrix family of scaled uniformly drawn random orthogonal matrices and verify that they satisfy the hypotheses of Proposition~\ref{prop:conditional-transfer}.

Let $A=Q_AD_AQ_A^\top$ and $B=Q_BD_BQ_B^\top$, where $Q_A$ and $Q_B$ are independent uniformly distributed orthogonal matrices and $D_A$ and $D_B$ are deterministic diagonal matrices with uniformly bounded spectra. The spectra of $A^\top A$ and $B^\top B$ are uniformly bounded with empirical spectral distributions converging to $\mu_A$ and $\mu_B$ following Assumption~\ref{def:empspectral}. By the definition of integration against a discrete measure, we have
\begin{equation}
\begin{aligned}
\tau_n=\frac 1 n \sum_{i=1}^n a_i = \int x\,d\left(\frac 1 n \sum_{i=1}^n \delta_{a_i}\right)(x)\longrightarrow\int x\,d\mu_A(x) \eqqcolon \bar{a}.
\end{aligned}
\end{equation}
Similarly, $\bar b_d\coloneqq d^{-1}\operatorname{tr}(B^\top B)\to\bar b\coloneqq\int x\,d\mu_B(x)$. Suppose that $S$ and $\beta^\star$ are independent of $Q_B$. Then, considering the high-dimensional regime, we have
\begin{equation}\label{eq:haar-active-block}
\left\|(B^\top B)_{S,S}-\bar b I_k\right\|_{\mathrm{op}}
\xrightarrow{\mathbb{P}}0.
\end{equation}
This concentration statement is proved in Appendix~\ref{app:appC}.
Similar to Section~\ref{sec:gauss}, we have
$\left\|Q_n\beta_S^\star\right\|_\infty
=
\bar{a}\bar{b}\left\|\beta_S^\star\right\|_\infty
+o_{\mathbb{P}}(1)$,
leaving us with the following thresholds
\begin{itemize}
\item    
$\Lambda_\mathrm{geo}=\bar{a}\bar{b}\alpha\sqrt{1-r^2},$
\item 
$\Lambda_{\mathrm{ad}}=\bar{a}\bar{b}\alpha\sqrt{\vartheta},$
\item 
$\Lambda_{\mathrm{pow}}=\bar{a}\bar{b}\frac{\alpha}{\sqrt{\sum_{i=1}^\infty i^{-2p}}}.
$
\end{itemize}
As in the previous cases, $\lambda<\Lambda$ implies $\hat{\beta}^\mathrm{det}\neq0$ with probability tending to one. Similar to Section~\ref{sec:gauss}, we argue that $A$ and $B$ are bounded in operator norm. Furthermore, by Weyl's inequality, $\lambda_\mathrm{min}(Q_n)\xrightarrow{\mathbb{P}}\bar a\bar b$; hence Assumption~\ref{ass:lassoUniqueness} holds with high probability whenever $\bar a\bar b>0$. Finally, in Appendix~\ref{app:appC}, we show that
\begin{equation}\label{eq:proveHaarGood}
\max_{j\in S^c}
\left|\tau_n b_j^\top B_Sh_n\right|
=O_{\mathbb{P}}\left(\sqrt{\frac{k+\log d}{d}}\right)
\end{equation}
holds, which implies that this class of matrices satisfies Proposition~\ref{prop:conditional-transfer}.

\section{Proofs of Proposition~\ref{prop:RestrictedProp} and Corollary~\ref{cor:Corollary}}\label{Sec:SectionIII}
In this section, we state and prove the intermediary results needed to prove Proposition~\ref{prop:RestrictedProp} and Corollary~\ref{cor:Corollary}. Figure~\ref{fig:restricted-dependencies} is an illustrative summary of the dependencies of this section.

\begin{figure*}[!htb]
	\centering
	\begin{tikzpicture}[
		x=1cm,y=1cm,
		>={Stealth[length=1.8mm,width=1.25mm]},
		every node/.style={font=\sffamily\scriptsize},
		assumptionbox/.style={draw=black!56,dashed,line width=0.5pt,rounded corners=1pt,fill=black!2,align=center,inner xsep=4pt,inner ysep=3pt,text width=2.45cm,minimum height=0.78cm},
		lemmabox/.style={draw=black!64,line width=0.5pt,rounded corners=1pt,fill=black!1,align=center,inner xsep=4pt,inner ysep=3pt,text width=2.15cm,minimum height=0.78cm},
		resultbox/.style={draw=blue!52!black,line width=0.6pt,rounded corners=1pt,fill=blue!6,align=center,inner xsep=4pt,inner ysep=4pt,text width=4.25cm,minimum height=1.08cm},
		finalbox/.style={resultbox,fill=blue!11},
		entry/.style={->,draw=black!68,line width=0.55pt,line cap=round,line join=round,shorten <=0.6pt,shorten >=0pt},
		transition/.style={->,draw=black!68,line width=0.55pt,line cap=round,line join=round,shorten <=0pt,shorten >=0pt},
		merge/.style={draw=black!65,line width=0.5pt,line cap=round,line join=round,shorten <=0.6pt,shorten >=0.6pt},
		feed/.style={merge,shorten <=0pt,shorten >=0.6pt},
		standing/.style={->,draw=black!55,dashed,line width=0.5pt,line cap=round,line join=round,shorten <=0pt,shorten >=0pt},
		standingfeed/.style={draw=black!53,dashed,line width=0.5pt,line cap=round,line join=round,shorten <=0pt,shorten >=0.6pt},
		junction/.style={circle,draw=black!68,fill=black!68,line width=0.3pt,inner sep=0pt,minimum size=2.4pt}
		]
		\node[assumptionbox] (baseass) at (-1.05,1.25)
		{\textbf{Assumptions~\ref{ass:Sparse}--\ref{ass:covstruct}}};
		\node[assumptionbox] (curvature) at (0.35,0)
		{\textbf{Assumption~\ref{ass:lassoUniqueness}}};
		
		\node[lemmabox] (moment) at (-4.05,1.25)
		{\textbf{Lemma~\ref{lem:moment}}};
		\node[lemmabox] (bilinear) at (-2.45,0)
		{\textbf{Lemma~\ref{lem:bilinear}}};
		\node[lemmabox] (active) at (0.35,-1.20)
		{\textbf{Lemma~\ref{lem:active-conc}}};
		\node[lemmabox] (stability) at (3.15,-1.20)
		{\textbf{Lemma~\ref{lem:Lemma14}}};
		
		\node[resultbox] (proposition) at (0.35,-2.60)
		{\textbf{Proposition~\ref{prop:RestrictedProp}}\\
			oracle lasso converges to the deterministic lasso};
		\node[finalbox] (correstricted) at (0.35,-4.05)
		{\textbf{Corollary~\ref{cor:Corollary}}\\
			oracle SE universality};
		
		\node[junction] (activejoin) at (0.35,-0.64) {};
		\node[junction] (propjoin) at ($(proposition.north)+(0,0.18)$) {};
		\node[junction] (restrictedjoin) at ($(correstricted.north)+(0,0.18)$) {};
		\node[junction] (momentbranch) at ($(moment.south)+(0,-0.20)$) {};
		
		\draw[standing] (baseass.west) -- (moment.east);
		\draw[standing] (baseass.south west) -- (bilinear.north);
		\draw[standingfeed] (curvature.south) -- (activejoin);
		\draw[feed] (bilinear.south)
		.. controls (-2.45,-0.61) and (-0.75,-0.64) .. (activejoin);
		\draw[entry] (activejoin) -- (active.north);
		\draw[transition] (active.east) to[out=0,in=180] (stability.west);
		
		\draw[feed] (moment.south) -- (momentbranch);
		\draw[merge] (momentbranch)
		.. controls (-3.90,-0.75) and (-2.40,-1.88) .. (propjoin);
		\draw[feed] (active.south) -- (propjoin);
		\draw[feed] (stability.south)
		.. controls (3.15,-1.80) and (1.45,-1.88) .. (propjoin);
		\draw[entry] (propjoin) -- (proposition.north);
		
		\draw[feed] (proposition.south) -- (restrictedjoin);
		\draw[merge] (momentbranch)
		.. controls (-4.25,-1.85) and (-3.10,-3.33) .. (-2.05,-3.33)
		-- (restrictedjoin);
		\draw[entry] (restrictedjoin) -- (correstricted.north);
	\end{tikzpicture}
	\caption{Proposition~\ref{prop:RestrictedProp} and Corollary~\ref{cor:Corollary}. Dashed arrows and boxes indicate assumptions, and solid arrows indicate direct proof dependencies.}
	\label{fig:restricted-dependencies}
\end{figure*}

\subsection{Auxiliary Lemmas}
The first lemma we present shows $\frac{\|y(W)\|_2^2}{n}$ has a uniformly bounded fourth moment, which implies stochastic boundedness. Furthermore, the lemma shows that the SE of the full and oracle estimators are bounded in second moment, which implies uniform integrability. This is motivated by our need to compare the solutions of \eqref{eq:lassoproblem} and \eqref{eq:OracleObj} which carry randomness within the deterministic solution \eqref{eq:detlasso} as stated in Proposition~\ref{prop:RestrictedProp}. Thus, we present the first lemma as follows. 

\begin{lemma}\label{lem:moment}
Suppose that Assumptions~\ref{ass:Sparse}--\ref{ass:covstruct} hold. Then
\begin{equation}\label{eq:bound1}
    \sup_n \mathbb{E}\left(\frac{\|{y(W)\|_2^2}}{n}\right)^4<\infty,
\end{equation}
\begin{equation}\label{eq:bound2}
\sup_n \mathbb{E}\left[\|\hat{\beta}-\beta^\star\|_2^4+\|\hat{\beta}^\mathrm{oracle}_S-\beta^\star_S\|_2^4\right]<\infty.
\end{equation}
for all $W\in\{Z,G\}$.
\end{lemma}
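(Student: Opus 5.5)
The plan has two parts. First, bound $\|y\|_2^2/n$ directly from the model. Second, turn that into an error bound through the basic optimality inequality for the lasso, which requires no curvature assumption.

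\textbf{Bounding $\|y\|_2^2/n$.} Write $y(W)=AWB\beta^\star+\xi$. Using $(a+b)^2\le 2a^2+2b^2$ and Assumption~\ref{ass:covstruct},
\[
\frac{\|y\|_2^2}{n}\le \frac{2\|A\|_{\mathrm{op}}^2\|WB\beta^\star\|_2^2}{n}+\frac{2\|\xi\|_2^2}{n}.
\]
Set $v=B\beta^\star$. Then $\|v\|_2\le \|B\|_{\mathrm{op}}\alpha$ is bounded by Assumptions~\ref{ass:Sparse}--\ref{ass:covstruct}. The coordinates $(Wv)_i=\sum_j W_{ij}v_j$ are independent across $i$, and each is sub-Gaussian with $\|(Wv)_i\|_{\psi_2}\le C\|v\|_2$; this follows from the i.i.d.\ sub-Gaussian entries, and in the Gaussian case it is exact. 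Hence $\mathbb{E}(Wv)_i^8\le C'$ uniformly. Expanding $\big(\tfrac1n\sum_i (Wv)_i^2\big)^4$ and applying Jensen (or H\"older) gives $\mathbb{E}\big(\tfrac1n\|Wv\|_2^2\big)^4\le \tfrac1n\sum_i\mathbb{E}(Wv)_i^8\le C'$. The noise term is handled the same way, since $\|\xi\|_2^2/\sigma^2\sim\chi^2_n$ has fourth moment of order $n^4$. This proves \eqref{eq:bound1}.

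\textbf{Bounding the estimation error.} Compare the objective at $\hat\beta$ with its value at $\beta=0$:
\[
\lambda\|\hat\beta\|_1\le \frac{1}{2n}\|y-X\hat\beta\|_2^2+\lambda\|\hat\beta\|_1\le\frac{1}{2n}\|y\|_2^2 .
\]
This gives $\|\hat\beta\|_2\le\|\hat\beta\|_1\le \|y\|_2^2/(2n\lambda)$ with $\lambda$ fixed. Then
\[
\|\hat\beta-\beta^\star\|_2^4\le 8\big(\|\hat\beta\|_2^4+\alpha^4\big)\le C\Big(\tfrac{\|y\|_2^2}{n}\Big)^4+8\alpha^4,
\]
and \eqref{eq:bound1} bounds the expectation uniformly in $n$. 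The oracle estimator works identically: $\beta=0$ is feasible for \eqref{eq:OracleObj}, so $\|\hat\beta^{\mathrm{oracle}}_S\|_1\le\|y\|_2^2/(2n\lambda)$ holds with the same bound. The argument applies to every minimizer, so non-uniqueness causes no difficulty.

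\textbf{Main obstacle.} The delicate step is the uniform eighth moment of $(Wv)_i$. It needs a sub-Gaussian Hoeffding-type bound for linear combinations of the independent entries $W_{ij}$, with a constant depending only on $\sup\|Z_{ij}\|_{\psi_2}$ and $\|v\|_2$. This is standard, for instance by Vershynin's general Hoeffding inequality. Everything else is elementary. One technicality: the fourth power of an average of $n$ terms is controlled by the average of the eighth moments, so no independence is needed there. Independence enters only through the sub-Gaussian moment bound itself.
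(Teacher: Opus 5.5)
Your proof is correct and follows the paper's argument. It bounds $\mathbb{E}(Wv)_i^8$ uniformly via the general Hoeffding/Khintchine inequality, bounds the Gaussian noise similarly, and then compares the objective at $\hat\beta$ with its value at $\beta=0$ (feasible for both the full and oracle problems) to get $\|\hat\beta\|_2\le\|\hat\beta\|_1\le\|y\|_2^2/(2n\lambda)$. The differences are cosmetic: you use Jensen's inequality $\bigl(\tfrac1n\sum_i Y_i^2\bigr)^4\le\tfrac1n\sum_i Y_i^8$ and the $\chi^2_n$ moment directly, whereas the paper expands the fourth power and applies H\"older termwise.
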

The proof is given in Appendix~\ref{app:appD}. In the next lemma, we introduce two bounds that are directly used in formulating the population lasso problem \eqref{eq:detlasso}, these inequalities are directly used in a future lemma to prove Proposition~\ref{prop:RestrictedProp}. More specifically, this lemma provides the necessary tool to draw equivalence between the random \eqref{eq:OracleObj} and deterministic \eqref{eq:detlasso} lasso problem  by means of future lemmas.

\begin{lemma}\label{lem:bilinear}
Let $W\in\{Z,G\}$, and let $p,q\in\mathbb{R}^d$ be deterministic vectors that satisfy $\|p\|_2,\|q\|_2\leq L$ where $L<\infty$ is a constant. Then, there exist constants $c,C>0$ such that for all $t>0$ the following inequalities hold. 
\begin{equation}
    \mathbb{P}\left(
    \left |{\frac{1}{n} p^\top W^\top A^\top AWq-\tau_np^\top q}\right|>t
    \right)
    \le
    C\mathrm{e}^{-cn\min(t^2,t)}.
\label{eq:probinequal1}
\end{equation}

\begin{equation}\label{eq:probinequal2}
    \mathbb{P}\left(
        \left.
        \left|
            \frac{1}{n} p^\top W^\top A^\top \xi
        \right| > t
        \,\right|\, \xi
    \right)
    \leq
    2 \mathrm{e}^{-\frac{c n^2 t^2}{\| \xi \|_2^2}},
\end{equation}
where we recall that $\tau_n=\frac 1 n \mathrm{tr}(A^\top A)$.
\end{lemma}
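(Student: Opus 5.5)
The plan is to vectorize the random matrix and treat both inequalities as concentration statements for quadratic and linear forms in the i.i.d.\ sub-Gaussian vector $w \coloneqq \mathrm{vec}(W)\in\mathbb{R}^{nd}$. We use the identity $Wq = (q^\top \otimes I_n) w$, and similarly for $p$. Then
\begin{equation*}
p^\top W^\top A^\top A W q = w^\top M w, \qquad M \coloneqq (p\otimes I_n)A^\top A (q^\top\otimes I_n) = (p q^\top)\otimes (A^\top A).
\end{equation*}
Since the entries of $w$ are independent, mean zero and unit variance, we have $\mathbb{E}[w^\top M w] = \mathrm{tr}(M) = (p^\top q)\,\mathrm{tr}(A^\top A) = n\tau_n p^\top q$. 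Thus the centered quantity in \eqref{eq:probinequal1} is exactly $\frac1n(w^\top M w - \mathbb{E} w^\top M w)$. For $W=G$ this is a special case of the sub-Gaussian one, so both cases are handled together with $K \coloneqq \sup\|W_{ij}\|_{\psi_2}<\infty$ from Assumption~\ref{ass:covstruct}.

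For \eqref{eq:probinequal1} I would invoke the Hanson--Wright inequality, applied to the (possibly non-symmetric) $M$:
\begin{equation*}
\mathbb{P}\left(|w^\top M w - \mathbb{E}w^\top M w| > s\right) \le 2\exp\left(-c\min\left(\frac{s^2}{K^4\|M\|_F^2},\frac{s}{K^2\|M\|_{\mathrm{op}}}\right)\right).
\end{equation*}
The key step is bounding the norms of the Kronecker product:
\begin{equation*}
\|M\|_{\mathrm{op}} = \|pq^\top\|_{\mathrm{op}}\|A^\top A\|_{\mathrm{op}} \le L^2 C_A^2, \qquad \|M\|_F = \|p\|_2\|q\|_2\|A^\top A\|_F \le L^2 C_A^2\sqrt{n}.
\end{equation*}
The second bound uses $\|A^\top A\|_F^2 \le n\|A\|_{\mathrm{op}}^4$. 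Setting $s = nt$ gives $s^2/\|M\|_F^2 \gtrsim n t^2$ and $s/\|M\|_{\mathrm{op}} \gtrsim nt$. The constants absorb $L$, $C_A$ and $K$, which yields $C e^{-cn\min(t^2,t)}$. The main point to check is that these constants are uniform in $n$. This holds because only $\|A\|_{\mathrm{op}}$ and the $\psi_2$ norm enter, and both are uniformly bounded. In particular, $B$ does not appear, since $p$ and $q$ are arbitrary deterministic vectors; in the later application one takes something like $p = B e_S$-type vectors.

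For \eqref{eq:probinequal2} we condition on $\xi$, which is independent of $W$. Then $p^\top W^\top A^\top \xi = \langle W, A^\top \xi p^\top\rangle_F = w^\top v$ with $v \coloneqq \mathrm{vec}(A^\top \xi p^\top)$. This is a linear combination of independent, mean-zero sub-Gaussian variables. By the general Hoeffding inequality, $\|w^\top v\|_{\psi_2} \le C K \|v\|_2$, and
\begin{equation*}
\|v\|_2 = \|A^\top\xi\|_2\|p\|_2 \le C_A L\|\xi\|_2.
\end{equation*}
Hence $\mathbb{P}(|w^\top v| > nt \mid \xi) \le 2\exp(-c n^2 t^2/\|\xi\|_2^2)$, again with $L$, $C_A$ and $K$ absorbed into $c$.

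I expect no serious obstacle. The only delicate points are the correct Kronecker bookkeeping, where one must verify that $\mathrm{tr}(M)$ gives exactly $n\tau_n p^\top q$, and the uniformity of the constants. A further subtlety is that Hanson--Wright is usually stated for a fixed matrix; here $M$ is deterministic because $p$ and $q$ are deterministic, so it applies directly.
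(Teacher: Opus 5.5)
Your proposal is correct and follows essentially the same route as the paper. You vectorize, write the form as $w^\top\bigl((pq^\top)\otimes A^\top A\bigr)w$ with mean $n\tau_n p^\top q$, and apply Hanson--Wright using $\|M\|_{\mathrm{op}}\le L^2C_A^2$ and $\|M\|_F\le L^2C_A^2\sqrt{n}$. The noise term is handled conditionally on $\xi$ by the general Hoeffding bound with coefficient norm at most $C_A L\|\xi\|_2/n$. The one cosmetic difference is that you apply Hanson--Wright directly to the non-symmetric $M$, which is valid because the inequality holds for arbitrary square matrices; the paper first symmetrizes, which is harmless but unnecessary.
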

The proof is given in Appendix~\ref{app:appE}. Next, we use the inequalities proved in Lemma~\ref{lem:bilinear} to show the equivalence of \eqref{eq:OracleObj} and \eqref{eq:detlasso} with respect to the active coordinates in our settings.
\begin{lemma}
\label{lem:active-conc}
Suppose Assumptions~\ref{ass:Sparse}, \ref{ass:covstruct}, and~\ref{ass:lassoUniqueness} hold. Then, for each $W\in\{Z,G\}$, the following hold.
\begin{equation}\label{eq:ineq1lemm6}
    \norm{\frac1n X_S(W)^\top X_S(W)-\tau_nB_S^\top B_S}_{\mathrm{op}}
    \xrightarrow{\mathbb{P}}0,
\end{equation}
\begin{equation}
\label{eq:ineq2lemm6}
    \norm{\frac1n X_S(W)^\top\xi}_2\xrightarrow{\mathbb{P}}0.
\end{equation}
\end{lemma}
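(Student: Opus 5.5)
The plan is a covering ($\epsilon$-net) argument on the $k$-dimensional active subspace, driven by the two tail bounds of Lemma~\ref{lem:bilinear}, combined with the sparsity $k=o(n)$. Note that $X_S(W)=AWB_S$. Hence for any $u\in\mathbb{R}^k$,
\[
u^\top\left(\tfrac1n X_S^\top X_S\right)u=\tfrac1n p^\top W^\top A^\top A W p,\qquad p\coloneqq B_S u\in\mathbb{R}^d,
\]
and $\|p\|_2\le \|B\|_{\mathrm{op}}\|u\|_2$ is uniformly bounded by Assumption~\ref{ass:covstruct}. Likewise, $u^\top(\tau_nB_S^\top B_S)u=\tau_n p^\top p$. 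So \eqref{eq:probinequal1} with $q=p$ controls the quadratic form pointwise on the unit sphere $\mathbb{S}^{k-1}$.

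For \eqref{eq:ineq1lemm6}, I would set $M\coloneqq\frac1n X_S^\top X_S-\tau_nB_S^\top B_S$, which is symmetric. Take a $1/4$-net $\mathcal N$ of $\mathbb{S}^{k-1}$ with $|\mathcal N|\le 9^k$. The standard net bound gives $\|M\|_{\mathrm{op}}\le 2\max_{u\in\mathcal N}|u^\top Mu|$. A union bound with \eqref{eq:probinequal1} (applied with $L=\sup_n\|B\|_{\mathrm{op}}$) then yields
\[
\mathbb{P}(\|M\|_{\mathrm{op}}>t)\le C\,9^k e^{-cn\min(t^2/4,t/2)}.
\]
For any fixed $t>0$ this tends to zero, since $k\log 9=o(n)$. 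This is exactly where Assumption~\ref{ass:Sparse} ($k=o(n)$) enters. I would double-check that the constants $c,C$ in Lemma~\ref{lem:bilinear} depend only on $L$, $\sup\|A\|_{\mathrm{op}}$ and the $\psi_2$-norm, and not on $p$ or $n$, so that the union bound is legitimate.

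For \eqref{eq:ineq2lemm6}, I would write $\|\frac1n X_S^\top\xi\|_2=\sup_{u\in\mathbb{S}^{k-1}}\frac1n u^\top B_S^\top W^\top A^\top\xi$. Using a $1/2$-net gives $\|v\|_2\le 2\max_{u\in\mathcal N}|u^\top v|$. Conditioning on $\xi$ (which is independent of $W$), \eqref{eq:probinequal2} with $p=B_Su$ and a union bound give
\[
\mathbb{P}\left(\left\|\tfrac1n X_S^\top\xi\right\|_2>t\,\middle|\,\xi\right)\le 2\cdot 5^k e^{-cn^2t^2/(4\|\xi\|_2^2)}.
\]
Gaussian concentration of the noise gives $\|\xi\|_2^2\le 2\sigma^2 n$ with probability tending to one. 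On that event the bound becomes $2\cdot5^k e^{-c' n t^2}\to0$, again because $k=o(n)$. Taking expectations over $\xi$ and adding the probability of the complementary event finishes the argument.

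The main obstacle is the first display: ensuring that the sub-exponential (Hanson--Wright type) bound of Lemma~\ref{lem:bilinear} is uniform enough to survive a union bound over an exponentially large net. The rate $e^{-cn}$ beats $9^k$ precisely because $k=o(n)$. If $k$ were proportional to $n$, the statement would fail, since the spectrum of $\frac1n X_S^\top X_S$ would not concentrate at a point. Assumption~\ref{ass:lassoUniqueness} is not needed for these two limits themselves. It is used afterwards, for example to transfer the limit to $\lambda_{\min}(\frac1n X_S^\top X_S)\ge\kappa/2$ with high probability via Weyl's inequality, which is what the later stability lemma requires.
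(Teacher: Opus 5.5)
Your proposal is correct and follows essentially the same route as the paper. For the operator-norm claim, the paper also uses a $1/4$-net on $\mathbb{S}^{k-1}$, the quadratic-form tail bound of Lemma~\ref{lem:bilinear}, and a union bound that is beaten because $k=o(n)$; for the noise term, it also runs a net argument conditional on $\xi$. The only differences are cosmetic: the paper reuses the $1/4$-net (with constant $4/3$) and controls $\|\xi\|_2\le M\sqrt{n}$ via Markov's inequality with $M\to\infty$, whereas you use a $1/2$-net and concentration of $\|\xi\|_2^2/n$ around $\sigma^2$.
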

The proof is given in Appendix~\ref{app:appF}. The next lemma is the final ingredient needed in this section. It establishes that the lasso under Assumption~\ref{ass:lassoUniqueness} is strongly convex with high probability, with a stable solution. This is the final fact needed to prove Proposition~\ref{prop:RestrictedProp} and Corollary~\ref{cor:Corollary}.

\begin{lemma}\label{lem:Lemma14}
Suppose Assumptions~\ref{ass:Sparse}, \ref{ass:covstruct}, and~\ref{ass:lassoUniqueness} hold. Then, $\sup_n \|\hat{\beta}^\mathrm{det}\|_2^2<\infty$, and for all $W\in\{Z,G\}$, we have
\begin{equation}\label{eq:stufftoproveforuniqueness}
\mathbb{P}\left(\lambda_\mathrm{min}\left(\frac 1 n X_S(W)^\top X_S(W)\right)\geq \frac \kappa 2\right)\to 1.
\end{equation}
\end{lemma}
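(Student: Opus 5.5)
The lemma has two claims, and I treat them separately.

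For the bound $\sup_n\|\hat\beta^{\mathrm{det}}\|_2^2<\infty$, I compare $F$ from \eqref{eq:determdquation} at its minimizer with its value at $\beta=0$. Since $\hat\beta^{\mathrm{det}}$ minimizes $F$,
\begin{equation*}
\lambda\|\hat\beta^{\mathrm{det}}\|_1\le F(\hat\beta^{\mathrm{det}})\le F(0)=\tfrac12 (\beta_S^\star)^\top Q_n\beta_S^\star\le \tfrac12\|Q_n\|_{\mathrm{op}}\alpha^2 .
\end{equation*}
Here $\|Q_n\|_{\mathrm{op}}\le \tau_n\|B\|_{\mathrm{op}}^2\le \|A\|_{\mathrm{op}}^2\|B\|_{\mathrm{op}}^2$, because $\tau_n=\frac1n\mathrm{tr}(A^\top A)\le\|A\|_{\mathrm{op}}^2$. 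Assumption~\ref{ass:covstruct} makes this uniformly bounded. Using $\|\cdot\|_2\le\|\cdot\|_1$ and the fact that $\lambda>0$ is fixed gives the claim.

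As an alternative route, strong convexity from Assumption~\ref{ass:lassoUniqueness} also works. The inequality $\frac{\kappa}{2}\|\hat\beta^{\mathrm{det}}-\beta_S^\star\|_2^2\le F(\hat\beta^{\mathrm{det}})-\lambda\|\hat\beta^{\mathrm{det}}\|_1\le F(\beta_S^\star)=\lambda\|\beta_S^\star\|_1$ is less convenient, though, because $\|\beta_S^\star\|_1$ can grow like $\sqrt{k}\alpha$. I would therefore use the first argument.

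For \eqref{eq:stufftoproveforuniqueness}, I invoke Lemma~\ref{lem:active-conc}, specifically \eqref{eq:ineq1lemm6}. It gives $\|\frac1n X_S^\top X_S-Q_n\|_{\mathrm{op}}\xrightarrow{\mathbb P}0$. By Weyl's inequality,
\begin{equation*}
\lambda_{\min}\bigl(\tfrac1n X_S^\top X_S\bigr)\ge \lambda_{\min}(Q_n)-\bigl\|\tfrac1n X_S^\top X_S-Q_n\bigr\|_{\mathrm{op}}\ge \kappa-\bigl\|\tfrac1n X_S^\top X_S-Q_n\bigr\|_{\mathrm{op}} .
\end{equation*}
The second inequality uses Assumption~\ref{ass:lassoUniqueness}. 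Hence the probability that $\lambda_{\min}(\frac1n X_S^\top X_S)\ge\kappa/2$ is at least the probability that the operator-norm deviation is at most $\kappa/2$, and that probability tends to one.

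Given Lemma~\ref{lem:active-conc}, this step is routine. The real difficulty, uniform operator-norm concentration over the $k$-dimensional active block with $k\to\infty$, sits inside Lemma~\ref{lem:active-conc}. It would be handled by an $\varepsilon$-net over the unit sphere of $\mathbb R^k$ together with \eqref{eq:probinequal1}, where $k=o(n)$ is what makes the union bound $e^{Ck-cn\min(t^2,t)}$ vanish. Within the present lemma, the main point needing care is the uniform boundedness of $\|Q_n\|_{\mathrm{op}}$, which comes directly from the operator-norm bounds on $A$ and $B$.
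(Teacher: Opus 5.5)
Your proposal is correct and follows essentially the same route as the paper. The bound on $\|\hat\beta^{\mathrm{det}}\|_2$ comes from $\lambda\|\hat\beta^{\mathrm{det}}\|_1\le F(0)=\frac{\tau_n}{2}\|B_S\beta_S^\star\|_2^2\le\frac12\|A\|_{\mathrm{op}}^2\|B\|_{\mathrm{op}}^2\alpha^2$ together with $\|\cdot\|_2\le\|\cdot\|_1$, and the eigenvalue claim follows from Weyl's inequality combined with Lemma~\ref{lem:active-conc}.
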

The proof is given in Appendix~\ref{app:appG}.


\subsection{Proof of Proposition~\ref{prop:RestrictedProp}} \label{subsec:Gngn}


\begin{proof}
Note that throughout this proof we use the same $F(\beta)$ notation as in \eqref{eq:determdquation}, and denote the objective function in \eqref{eq:OracleObj} as
\begin{equation}
F(\beta,W) \coloneqq \frac1{2n}\|y(W)-X_S(W)\beta\|_2^2+\lambda\|\beta\|_1.
\end{equation}
By \eqref{eq:optimalbound} in Appendix~\ref{app:appD} and properties of the $\ell^1$- and $\ell^2$- norm, we see that
$\|\hat{\beta}^\mathrm{oracle}_S\|_2\leq Y_n(W) / (2\lambda)$,
where $Y_n(W) \coloneqq 
{\|y(W)\|_2^2} / {n}$, and $Y_n(W)$ is $O_\mathbb
{P}(1)$. This means that $\hat{\beta}^\mathrm{oracle}$ is also stochastically bounded. For any $\delta>0$, choose $M_\delta<\infty$ large enough that the set $\mathcal K_\delta\coloneqq\{\beta\in\mathbb R^k:\|\beta\|_2\leq M_\delta\}$ contains $\hat\beta^\mathrm{det}$ for every $n$, as guaranteed by Lemma~\ref{lem:Lemma14}, and satisfies
$\mathbb P(\hat\beta_S^\mathrm{oracle}\in\mathcal K_\delta)\geq1-\delta$ for all sufficiently large $n$.
Thus, on this event, both the oracle and deterministic lasso solutions lie in $\mathcal K_\delta$.
Observe from \eqref{eq:OracleObj} that the noise term $\frac{1}{2n}\|\xi\|_2^2$ has no effect on the minimizer. Thus, we define $\Bar{F}(\beta,W) = F(\beta,W) - \frac
1{2n} \|\xi\|_2^2$, and through some algebraic manipulations obtain  
\begin{align}
\sup_{\beta\in\mathcal{K}_\delta}\left|\Bar{F}(\beta,W)-F(\beta)\right|&\leq \frac{1}{2}(\alpha+M_\delta)^2\|G_n(W)\|_\mathrm{op}+(\alpha+M_\delta)\|g_n(W)\|_2,
\end{align}
where $G_n(W) \coloneqq \frac1n X_S(W)^\top X_S(W)-\tau_nB_S^\top B_S$ and $g_n(W) \coloneqq \frac1n X_S(W)^\top \xi$.
By Lemma~\ref{lem:active-conc}, we have
$\Delta_{n,\delta}(W)\xrightarrow{\mathbb{P}}0$, where $\Delta_{n,\delta}(W) \coloneqq \sup_{\beta\in\mathcal{K}_\delta}\left|\Bar{F}(\beta,W)-F(\beta)\right|$. 

Now, because $F(\beta)$ is $ \kappa $-strongly convex, we deduce that
\begin{equation}
\frac \kappa 2 \|\hat{\beta}^\mathrm{oracle}_S-{\hat{\beta}}^\mathrm{det}\|_2^2\leq F(\hat{\beta}^\mathrm{oracle}_S)-F(\hat{\beta}^\mathrm{det}).
\end{equation}
On the event $\{\hat\beta_S^\mathrm{oracle}\in\mathcal K_\delta\}$, optimality of the two minimizers gives $F(\hat{\beta}^\mathrm{oracle}_S)-F(\hat{\beta}^\mathrm{det})\leq2\Delta_{n,\delta}(W)$, and hence
\begin{equation}\label{eq:eq94}
\|\hat{\beta}^\mathrm{oracle}_S-{\hat{\beta}}^\mathrm{det}\|_2^2 \leq \frac 4 \kappa \Delta_{n,\delta}(W).
\end{equation}
Therefore, for every $\epsilon>0$,
\begin{equation}
\mathbb P\left(\|\hat\beta_S^\mathrm{oracle}-\hat\beta^\mathrm{det}\|_2>\epsilon\right)
\leq\delta+\mathbb P\left(\Delta_{n,\delta}(W)>\frac{\kappa\epsilon^2}{4}\right).
\end{equation}
Taking $n\to\infty$ and then $\delta\downarrow0$ proves \eqref{eq:propFirst}.
\end{proof}

\subsection{Proof of Corollary~\ref{cor:Corollary}}\label{sec:ProofCor9}
\begin{proof}
By Proposition~\ref{prop:RestrictedProp}, Lemma~\ref{lem:Lemma14}, and the stochastic boundedness of $\hat{\beta}^\mathrm{oracle}_S$ established by \eqref{eq:optimalbound}, we have
\begin{equation}
\left|\big\|\hat{\beta}^\mathrm{oracle}_S-\beta^\star_S\big\|_2^2-\big\|\hat{\beta}^\mathrm{det}-\beta^\star_S\big\|_2^2\right|\xrightarrow{\mathbb{P}}0.
\end{equation}
We use uniform integrability implied by Lemma~\ref{lem:moment} and Vitali's theorem \cite{billingsley1995probability} to obtain
\begin{equation}
\mathbb{E}\left|\big\|\hat{\beta}^\mathrm{oracle}_S-\beta^\star_S\big\|_2^2-\big\|{\hat{\beta}}^\mathrm{det}-\beta^\star_S\big\|_2^2\right|\xrightarrow{}0.
\end{equation}
By the triangle inequality, we have
\begin{equation}
\mathbb{E}\left|\big\|\hat{\beta}^\mathrm{oracle}_S(Z) - \beta^\star_S\big\|_2^2 - \big\|\hat{\beta}^\mathrm{oracle}_S(G) - \beta^\star_S\big\|_2^2\right| \to 0.
\end{equation}
Since convergence in mean implies convergence in probability, both statements in \eqref{eq:corollary} follow. 
\end{proof}


\section{Proofs of Theorem~\ref{thm:lassoFullUniversality} and Corollary~\ref{cor:Corollary2}}\label{Sec:SectionIV}

In this section, we state the lemma necessary to complete the proof of Theorem~\ref{thm:lassoFullUniversality}. This lemma shows that the full and oracle lasso problems are equivalent. Figure~\ref{fig:full-dependencies} summarizes the result dependencies of this section.

	\begin{figure}[!htb]
		\centering
		\begin{tikzpicture}[
			x=1cm,y=1cm,
			>={Stealth[length=1.8mm,width=1.25mm]},
			every node/.style={font=\sffamily\scriptsize},
			assumptionbox/.style={draw=black!56,dashed,line width=0.5pt,rounded corners=1pt,fill=black!2,align=center,inner xsep=4pt,inner ysep=3pt,text width=2.90cm,minimum height=0.78cm},
			lemmabox/.style={draw=black!64,line width=0.5pt,rounded corners=1pt,fill=black!1,align=center,inner xsep=4pt,inner ysep=3pt,text width=2.15cm,minimum height=0.78cm},
			resultbox/.style={draw=blue!52!black,line width=0.6pt,rounded corners=1pt,fill=blue!6,align=center,inner xsep=4pt,inner ysep=4pt,text width=4.25cm,minimum height=1.08cm},
			finalbox/.style={resultbox,fill=blue!11},
			entry/.style={->,draw=black!68,line width=0.55pt,line cap=round,line join=round,shorten <=0.6pt,shorten >=0pt},
			transition/.style={->,draw=black!68,line width=0.55pt,line cap=round,line join=round,shorten <=0pt,shorten >=0pt},
			merge/.style={draw=black!65,line width=0.5pt,line cap=round,line join=round,shorten <=0.6pt,shorten >=0.6pt},
			feed/.style={merge,shorten <=0pt,shorten >=0.6pt},
			standing/.style={->,draw=black!55,dashed,line width=0.5pt,line cap=round,line join=round,shorten <=0pt,shorten >=0pt},
			standingfeed/.style={draw=black!53,dashed,line width=0.5pt,line cap=round,line join=round,shorten <=0pt,shorten >=0.6pt},
			junction/.style={circle,draw=black!68,fill=black!68,line width=0.3pt,inner sep=0pt,minimum size=2.4pt}
			]
			\node[assumptionbox] (standingass) at (-2.45,1.20)
			{\textbf{Assumptions~\ref{ass:Sparse}, \ref{ass:covstruct}, \ref{ass:lassoUniqueness}, \ref{ass:weirdstrong}}};
			\node[lemmabox] (inactive) at (0.65,1.20)
			{\textbf{Lemma~\ref{lem:inactive-conc}}};
			\node[lemmabox] (stability) at (0.65,0)
			{\textbf{Lemma~\ref{lem:Lemma14}}};
			\node[resultbox] (proposition) at (4.35,0)
			{\textbf{Proposition~\ref{prop:RestrictedProp}}\\
				oracle-to-deterministic convergence};
			
			\node[resultbox] (theoremfull) at (0.65,-1.25)
			{\textbf{Theorem~\ref{thm:lassoFullUniversality}}\\
				strict dual feasibility gives $\hat\beta=\hat\beta^{\mathrm{oracle}}$ w.h.p.};
			\node[resultbox] (correstricted) at (-4.15,-1.25)
			{\textbf{Corollary~\ref{cor:Corollary}}\\
				oracle SE universality};
			\node[finalbox] (corfull) at (0.65,-2.65)
			{\textbf{Corollary~\ref{cor:Corollary2}}\\
				full lasso SE universality};
			
			\node[junction] (theoremjoin) at (0.65,-0.60) {};
			\node[junction] (fulljoin) at ($(corfull.north)+(0,0.12)$) {};
			
			\draw[standing] (standingass.east) -- (inactive.west);
			\draw[feed] (inactive.south west)
			.. controls (-0.90,0.72) and (-0.95,-0.25) .. (-0.95,-0.40)
			.. controls (-0.95,-0.52) and (-0.15,-0.60) .. (theoremjoin);
			\draw[feed] (stability.south) -- (theoremjoin);
			\draw[feed] (proposition.south)
			.. controls (4.35,-0.58) and (1.90,-0.60) .. (theoremjoin);
			\draw[entry] (theoremjoin) -- (theoremfull.north);
			
			\draw[feed] (theoremfull.south) -- (fulljoin);
			\draw[feed] (correstricted.south)
			.. controls (-4.15,-1.98) and (-1.50,-1.99) .. (fulljoin);
			\draw[entry] (fulljoin) -- (corfull.north);
		\end{tikzpicture}
		\caption{Dependency structure for Theorem~\ref{thm:lassoFullUniversality} and Corollary~\ref{cor:Corollary2}. Dashed arrows and boxes indicate assumptions, and solid arrows indicate direct proof dependencies.}
		\label{fig:full-dependencies}
	\end{figure}
    
\begin{lemma}
\label{lem:inactive-conc}
Suppose Assumptions~\ref{ass:Sparse}, \ref{ass:covstruct}, \ref{ass:lassoUniqueness}, and~\ref{ass:weirdstrong} hold. Then, for each $W\in\{Z,G\}$,
\begin{equation}
    \max_{j\in S^c}
    \abs{\frac1n X_j(W)^\top X_S(W)h_n-\tau_nb_j^\top B_Sh_n}
    \xrightarrow{\mathbb{P}}0,
\label{eq:inactive-signal-conc}
\end{equation}
\begin{equation}
    \max_{j\in S^c}
    \norm{\frac1n X_j(W)^\top X_S(W)}_2
    =O_\mathbb{P}(1),
    \label{eq:inactive-cross-op}
\end{equation}
and
\begin{equation}
    \max_{j\in S^c}
    \abs{\frac1n X_j(W)^\top\xi}
    \xrightarrow{\mathbb{P}}0.
    \label{eq:inactive-noise-conc}
\end{equation}
\end{lemma}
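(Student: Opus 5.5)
Each of the three claims is a maximum over at most $d$ coordinates of a bilinear or linear form in $W$. I would prove each with a union bound over $j\in S^c$, using the tail bounds of Lemma~\ref{lem:bilinear} and the fact that $\log d = O(\log n)$ because $d/n\to\gamma$.

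For \eqref{eq:inactive-signal-conc}, write $X_j(W)=AWb_j$ and $X_S(W)h_n=AWB_Sh_n$, so that
\[
\frac1n X_j^\top X_S h_n=\frac1n b_j^\top W^\top A^\top A W (B_S h_n).
\]
I would apply \eqref{eq:probinequal1} with $p=b_j$ and $q=B_Sh_n$. These are deterministic vectors: $h_n=\beta_S^\star-\hat\beta^{\mathrm{det}}$ depends only on $A$, $B$ and $\beta^\star$. Their norms are bounded uniformly in $n$, since $\|b_j\|_2\le\|B\|_{\mathrm{op}}$ and $\|B_Sh_n\|_2\le\|B\|_{\mathrm{op}}(\alpha+\sup_n\|\hat\beta^{\mathrm{det}}\|_2)$, and the last supremum is finite by Lemma~\ref{lem:Lemma14}. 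Fixing $t>0$, the union bound gives
\[
\mathbb P\Bigl(\max_{j\in S^c}\bigl|\tfrac1n X_j^\top X_Sh_n-\tau_nb_j^\top B_Sh_n\bigr|>t\Bigr)\le dCe^{-cn\min(t^2,t)}\to0 .
\]
The constants $c,C$ in Lemma~\ref{lem:bilinear} depend only on $L$, $\|A\|_{\mathrm{op}}$ and the $\psi_2$ norm, so they are uniform over $j$.

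For \eqref{eq:inactive-noise-conc}, I would condition on $\xi$ and apply \eqref{eq:probinequal2} with $p=b_j$ together with a union bound. This gives the conditional bound $2d\,e^{-cn^2t^2/\|\xi\|_2^2}$. On the event $\|\xi\|_2^2\le2\sigma^2n$, which has probability tending to one by chi-square concentration, this is at most $2de^{-cnt^2/(2\sigma^2)}\to0$. Taking expectations over $\xi$ then finishes the claim.

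The main obstacle is \eqref{eq:inactive-cross-op}, because $\frac1nX_j^\top X_S$ is a $k$-dimensional vector with $k\to\infty$, and a coordinatewise argument loses a factor of $k$. I would bound the norm directly:
\[
\Bigl\|\frac1nX_j^\top X_S\Bigr\|_2\le \Bigl\|\frac{1}{\sqrt n}X_j\Bigr\|_2\,\Bigl\|\frac1{\sqrt n}X_S\Bigr\|_{\mathrm{op}} .
\]
The second factor is $O_{\mathbb P}(1)$ by \eqref{eq:ineq1lemm6} of Lemma~\ref{lem:active-conc}, since $\|\tau_nB_S^\top B_S\|_{\mathrm{op}}\le\|A\|_{\mathrm{op}}^2\|B\|_{\mathrm{op}}^2$; alternatively one can use $\|X\|_{\mathrm{op}}/\sqrt n=O_{\mathbb P}(1)$ from standard sub-Gaussian operator norm bounds. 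For the first factor, $\frac1n\|X_j\|_2^2=\frac1nb_j^\top W^\top A^\top AWb_j$, and by \eqref{eq:probinequal1} with $p=q=b_j$ and $t=1$ together with a union bound, $\max_j\frac1n\|X_j\|_2^2\le\tau_n\|b_j\|_2^2+1\le C'$ with probability at least $1-dCe^{-cn}\to1$. Combining the two factors gives \eqref{eq:inactive-cross-op}.

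The only delicate point is verifying that the constants in Lemma~\ref{lem:bilinear} are uniform in $n$ and $j$, which the lemma's statement provides.
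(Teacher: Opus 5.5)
Your proof is correct. For \eqref{eq:inactive-signal-conc} and \eqref{eq:inactive-noise-conc} it follows the paper's argument: Lemma~\ref{lem:bilinear} plus a union bound over $j\in S^c$. For the noise term the paper works on $\{\|\xi\|_2\le M\sqrt n\}$, uses Markov's inequality for the complement, and lets $M\to\infty$. Your use of the law of large numbers for $\|\xi\|_2^2/n$ is an equivalent device.

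The genuine difference is in \eqref{eq:inactive-cross-op}. The paper centers the cross vector by setting $a_j(W)=\frac1nX_j^\top X_S-\tau_nb_j^\top B_S$. It then applies \eqref{eq:probinequal1} with $p=b_j$ and $q=B_Su$ for $u$ in a $\frac14$-net of $\mathbb S^{k-1}$. A union bound over both $j\in S^c$ and the $9^k$ net points gives a tail of order $\exp\bigl(k\log 9+\log d-cn\min(t^2,t)\bigr)$. This yields $\max_j\|a_j(W)\|_2\to0$ in probability, and the $O_{\mathbb P}(1)$ claim then follows from $\|\tau_nb_j^\top B_S\|_2\le C_A^2C_B^2$.

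Your factorization $\|\frac1nX_j^\top X_S\|_2\le\|X_j/\sqrt n\|_2\,\|X_S/\sqrt n\|_{\mathrm{op}}$ instead decouples the index $j$ from the $k$-dimensional direction. You then only need scalar column-norm concentration, with a union bound over $d$ terms, plus the operator-norm control already proved in Lemma~\ref{lem:active-conc}. In fact, since $\|X_j\|_2\le\|X\|_{\mathrm{op}}$, the bound $\max_j\|\frac1nX_j^\top X_S\|_2\le\|X\|_{\mathrm{op}}^2/n=O_{\mathbb P}(1)$ holds without using sparsity at all.

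The paper's route buys a strictly stronger statement: uniform-in-$j$ concentration of the whole cross-covariance vector around $\tau_nb_j^\top B_S$. The price is the extra $9^k$ union bound, which is where $k=o(n)$ enters. However, only the $O_{\mathbb P}(1)$ bound is used in the proof of Theorem~\ref{thm:lassoFullUniversality}, where it multiplies $\|h(W)-h_n\|_2\to0$. So your cheaper argument suffices for everything downstream.
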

The proof is given in Appendix~\ref{app:appH}.
\subsection{Proof of Theorem~\ref{thm:lassoFullUniversality}}

\begin{proof}
Let $h(W)
=\beta_S^\star
-\hat{\beta}_S^\mathrm{oracle}$, then we can rewrite $r_S(W)=X_S(W)h(W)+\xi
$. For all inactive coordinates $j\in S^c$, we have
\begin{align}
&\phantom{={}}\frac{1}{n}X_j(W)^\top r_S(W)
-\tau_n b_j^\top B_S h_n \nonumber\\
&=
\left[
\frac{1}{n}X_j(W)^\top X_S(W)h_n
-\tau_n b_j^\top B_S h_n
\right]
+
\frac{1}{n}X_j(W)^\top X_S(W)
\left(h(W)-h_n\right)
+
\frac{1}{n}X_j(W)^\top\xi.
\end{align}
By taking the absolute values and maximizing, we get
\begin{align}
\max_{j\in S^c}
\left|
\frac{1}{n}X_j(W)^\top r_S(W)
-
\tau_n b_j^\top B_S h_n
\right|&\leq\max_{j\in S^c}
\left|
\frac{1}{n}X_j(W)^\top X_S(W)h_n
-
\tau_n b_j^\top B_S h_n
\right|\nonumber\\
&\quad+\max_{j\in S^c}
\left|
\frac{1}{n}X_j(W)^\top X_S(W)
\left(h(W)-h_n\right)\right|\nonumber\\
&\quad+\max_{j\in S^c}
\left|
\frac{1}{n}X_j(W)^\top\xi
\right|.
\end{align}
The first and third terms converge to 0 by Lemma~\ref{lem:inactive-conc}. For the middle term, by Proposition~\ref{prop:RestrictedProp}, and by Lemma~\ref{lem:inactive-conc} this term also converges to 0, so

\begin{equation}\label{eq:equationIneed}
\max_{j\in S^c}
\left|
\frac{1}{n}X_j(W)^\top r_S(W)
-
\tau_n b_j^\top B_S h_n
\right|
\xrightarrow{\mathbb{P}}0.
\end{equation}
Let $D_n(W)\coloneqq \max_{j\in S^c}
\left|
\frac{1}{n}X_j(W)^\top r_S(W)
-
\tau_n b_j^\top B_S h_n
\right|$, for all $j\in S^c$, we have
\begin{align}
\left|
\frac{1}{n}X_j(W)^\top r_S(W)
\right|
\leq
\left|\tau_n b_j^\top B_S h_n\right|
+
\left|
\frac{1}{n}X_j(W)^\top r_S(W)
-\tau_n b_j^\top B_S h_n
\right| .
\end{align}
By maximizing over the inactive coordinates we have
\begin{equation}
\left\|
\frac{1}{n}X_{S^c}(W)^\top r_S(W)
\right\|_\infty
\leq
(1-\eta)\lambda
+
D_n(W).
\end{equation}
Thus, on the event $D_n(W)<\eta\lambda$, which has probability tending to one by \eqref{eq:equationIneed}, we have $\left\|
\frac{1}{n}X_{S^c}(W)^\top r_S(W)
\right\|_\infty
<
\lambda$, therefore
\begin{equation}\label{eq:eq113}
\mathbb{P}\left(
\left\|
\frac{1}{n}X_{S^c}(W)^\top r_S(W)
\right\|_\infty
<
\lambda
\right)
\to 1.
\end{equation}
The oracle lasso problem \eqref{eq:OracleObj} minimizes the restricted objective over coordinates in \(S\). Therefore, its active coordinates satisfy the KKT condition
\begin{equation}
\begin{aligned}
0
&\in
-\frac{1}{n}X_S(W)^\top r_S(W)
+\lambda
\partial\big\|\hat{\beta}^\mathrm{oracle}_S(W)\big\|_1.
\end{aligned}
\end{equation}
This is equivalent to the existence of $z_S\in \partial\|\hat{\beta}^\mathrm{oracle}_S(W)\|_1$ that satisfies
\begin{equation}
\begin{aligned}
0
&=-\frac{1}{n}X_S(W)^\top r_S(W)
+\lambda z_S .
\end{aligned}
\end{equation}
On the event in \eqref{eq:eq113}, let $z_{S^c}=\frac{1}{n\lambda}X_{S^c}(W)^\top r_S(W)$. Then $\|z_{S^c}\|_\infty<1$, and since $\hat{\beta}_{S^c}^\mathrm{oracle}(W)=0$, we have $z_{S^c}\in\partial\big\|\hat{\beta}_{S^c}^\mathrm{oracle}(W)\big\|_1$. Hence, let $z=(z_S,z_{S^c})$, then $z\in\partial\big\|\hat{\beta}^\mathrm{oracle}(W)\big\|_1$.

Considering the full lasso problem \eqref{eq:lassoproblem}, we have the following KKT conditions
\begin{equation}
\begin{aligned}
0
&\in
-\frac{1}{n}X(W)^\top\bigl(y(W)-X(W)\beta\bigr)+\lambda \partial\|\beta\|_1 ,
\end{aligned}
\end{equation}
 We validate this condition for  $\hat{\beta}^\mathrm{oracle}$, with $r_\mathrm{full}(W)=y(W)-X(W)\hat{\beta}^\mathrm{oracle}(W)$. Considering the active coordinates, 
\begin{equation}
0
=
-
\frac{1}{n}X_S(W)^\top r_\mathrm{full}(W)
+
\lambda z_S.
\end{equation}
For the inactive coordinates, since $\hat{\beta}_{S^c}^\mathrm{oracle} = 0$, also $\lambda  z_{S^c} = \frac{1}{n}X_{S^c}(W)^\top r_\mathrm{full}(W)$, therefore
\begin{equation}
\begin{aligned}
&-
\frac{1}{n}X_{S^c}(W)^\top r_\mathrm{full}(W)
+
\lambda z_{S^c}=0.
\end{aligned}
\end{equation}
Thus, the full vector KKT condition holds
\begin{equation}
0
\in
-
\frac{1}{n}X(W)^\top r_\mathrm{full}(W)
+
\lambda\,
\partial\big\|\hat{\beta}^\mathrm{oracle}(W)\big\|_1 .
\end{equation}
This means that $\hat{\beta}^\mathrm{oracle}(W)$ is a full lasso minimizer. Through some algebraic steps, we show every full lasso minimizer in these settings is supported on $S$ and is unique. Assume $\tilde{\beta}$ is another candidate for a full lasso minimizer, and let $\Delta = \tilde{\beta}-\hat{\beta}^\mathrm{oracle}$, together with $r = y-X\hat{\beta}^\mathrm{oracle}$. We further denote the objective function of \eqref{eq:lassoproblem} as
\begin{equation}
F_\mathrm{full}(\beta) \coloneqq \frac 1{2n} \left\|y-X\beta\right\|_2^2+\lambda\|\beta\|_1.
\end{equation}
The KKT conditions provide $\frac 1 n X^\top r = \lambda z$. Furthermore
\begin{align}\label{eq:equationtoprovefinaltheorem}
F_\mathrm{full}(\hat{\beta}^\mathrm{oracle}+\Delta)-F_\mathrm{full}(\hat{\beta}^\mathrm{oracle})=\frac{1}{2n}\|X\Delta\|_2^2
+ \lambda \left(\|\hat{\beta}^\mathrm{oracle}+\Delta\|_1-\|\hat{\beta}^\mathrm{oracle}\|_1-z^\top \Delta\right).
\end{align}

Since $\tilde\beta$ is also a minimizer, the left-hand side of \eqref{eq:equationtoprovefinaltheorem} is $0$. Using the subgradient inequality on $S$, together with $\hat\beta_{S^c}^\mathrm{oracle}=0$ and $\|z_{S^c}\|_\infty<1$, gives
\begin{equation}\label{eq:finalequation}
0\geq \frac{1}{2n}\|X\Delta\|_2^2+\lambda \left(1-\|z_{S^c}\|_\infty\right)\|\Delta_{S^c}\|_1\geq 0.
\end{equation}
Both terms in \eqref{eq:finalequation} must therefore vanish. It follows that $\Delta_{S^c}=0$. On the event in Lemma~\ref{lem:Lemma14} where $X_S$ has full column rank, $X\Delta=X_S\Delta_S=0$ then implies $\Delta_S=0$. Thus, on the intersection of these two events, $\hat\beta^\mathrm{oracle}$ is the unique full lasso minimizer. Both events have probability tending to one, which proves the theorem.
\end{proof}

\subsection{Proof of Corollary~\ref{cor:Corollary2}}

\begin{proof}
In Theorem~\ref{thm:lassoFullUniversality}, we proved that, with probability tending to one, the lasso solution is unique and equals the solution of \eqref{eq:OracleObj}. Suppose we call the event on which Theorem~\ref{thm:lassoFullUniversality} holds $\mathcal{E}_{\mathrm{thm}}$. From here, we have
\begin{align}
\left|\big\|\hat{\beta}(W)-\beta^\star\big\|_2^2-\big\|\hat{\beta}^\mathrm{oracle}(W)-\beta^\star\big\|_2^2\right|
= \left|\big\|\hat{\beta}(W)-\beta^\star\big\|_2^2-\big\|\hat{\beta}^\mathrm{oracle}(W)-\beta^\star\big\|_2^2\right|\mathbf{1}_{\mathcal{E}^c_{\mathrm{thm}}}.
\end{align}
By invoking the Cauchy-Schwarz inequality and taking expectations we have
\begin{align}
\mathbb{E}\left|\big\|\hat{\beta}(W)-\beta^\star\big\|_2^2-\big\|\hat{\beta}^\mathrm{oracle}(W)-\beta^\star\big\|_2^2\right|
&\leq \left[\mathbb{E}\left(\big\|\hat{\beta}(W)-\beta^\star\big\|_2^2-\big\|\hat{\beta}^\mathrm{oracle}(W)-\beta^\star\big\|_2^2\right)^2\right]^\frac{1}{2}\mathbb{P}\left(\mathcal{E}^c_{\mathrm{thm}}\right)^\frac{1}{2} \nonumber \\
&\to 0,
\end{align}
 by Lemma~\ref{lem:moment} and Theorem~\ref{thm:lassoFullUniversality}. The final step follows from the triangle inequality and yields
 \begin{equation}   \mathbb{E}\left|\big\|\hat{\beta}(Z) - \beta^\star\big\|_2^2 - \big\|\hat{\beta}(G) - \beta^\star\big\|_2^2\right|\xrightarrow{} 0,
 \end{equation}
where Markov's inequality yields both statements in \eqref{eq:Corollary2}.
\end{proof}

\section{Numerical Illustrations}\label{Sec:SectionV}

In this section, we provide some experiments to verify the theoretical claims made throughout this paper. We empirically show that the SE of the full and oracle estimators are asymptotically equivalent in settings satisfying the paper's assumptions.

\begin{figure}[!htb]
\centering
\begin{subfigure}[t]{0.31\linewidth}
    \includegraphics[width=\linewidth]{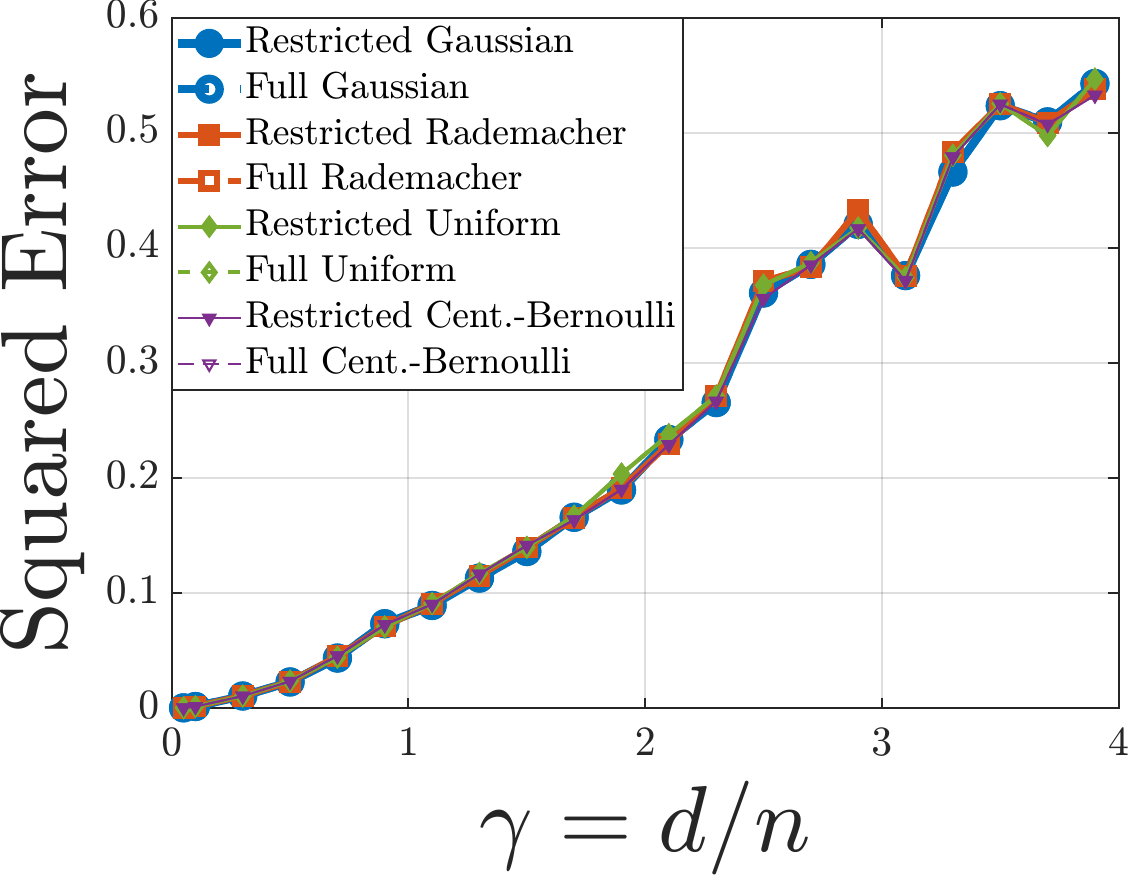}
    \caption{Geometric profile ($r=0.25$)}
\end{subfigure}
\hfill
\begin{subfigure}[t]{0.31\linewidth}
    \includegraphics[width=\linewidth]{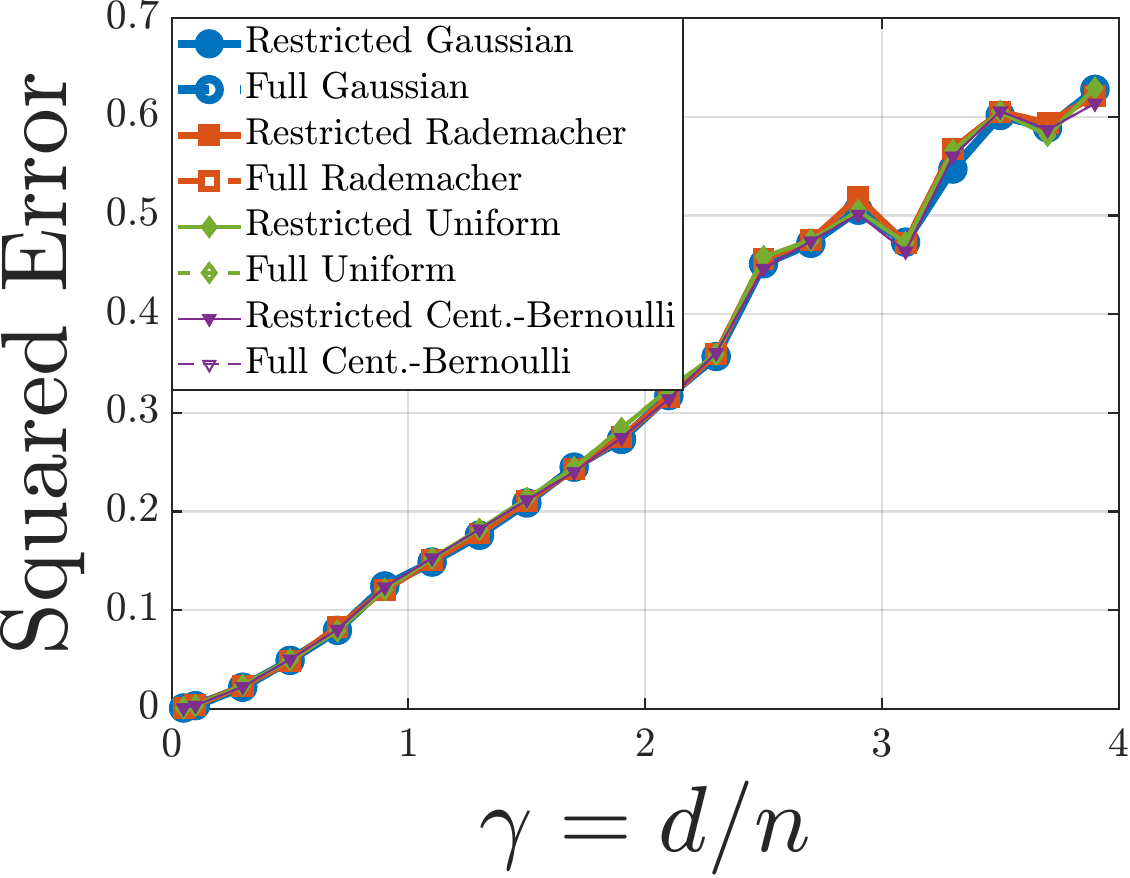}
    \caption{Power profile ($p=1.5$)}
\end{subfigure}
\hfill
\begin{subfigure}[t]{0.31\linewidth}
    \includegraphics[width=\linewidth]{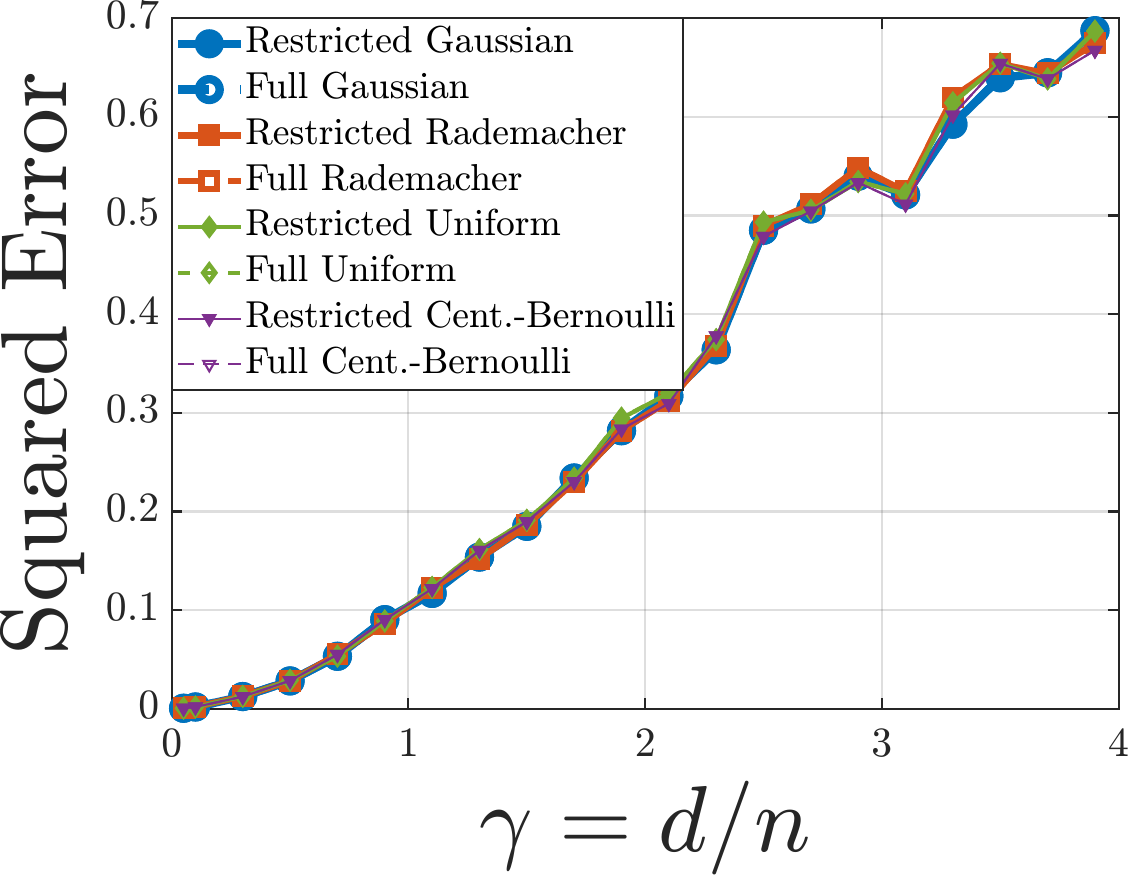}
    \caption{Anchor-diffuse profile ($r=0.25$, $\vartheta=0.75$)}
\end{subfigure}
\caption{SE curves for DST-dependent matrices under the three signal profiles. Each panel compares the oracle and full lasso for Gaussian, Rademacher, uniform, and centered-Bernoulli latent matrices, with each curve averaged over $50$ trials.}
\label{fig:sim-dst}
\end{figure}

\begin{figure*}[!htb]
\centering
\begin{subfigure}[t]{0.31\linewidth}
    \includegraphics[width=\linewidth]{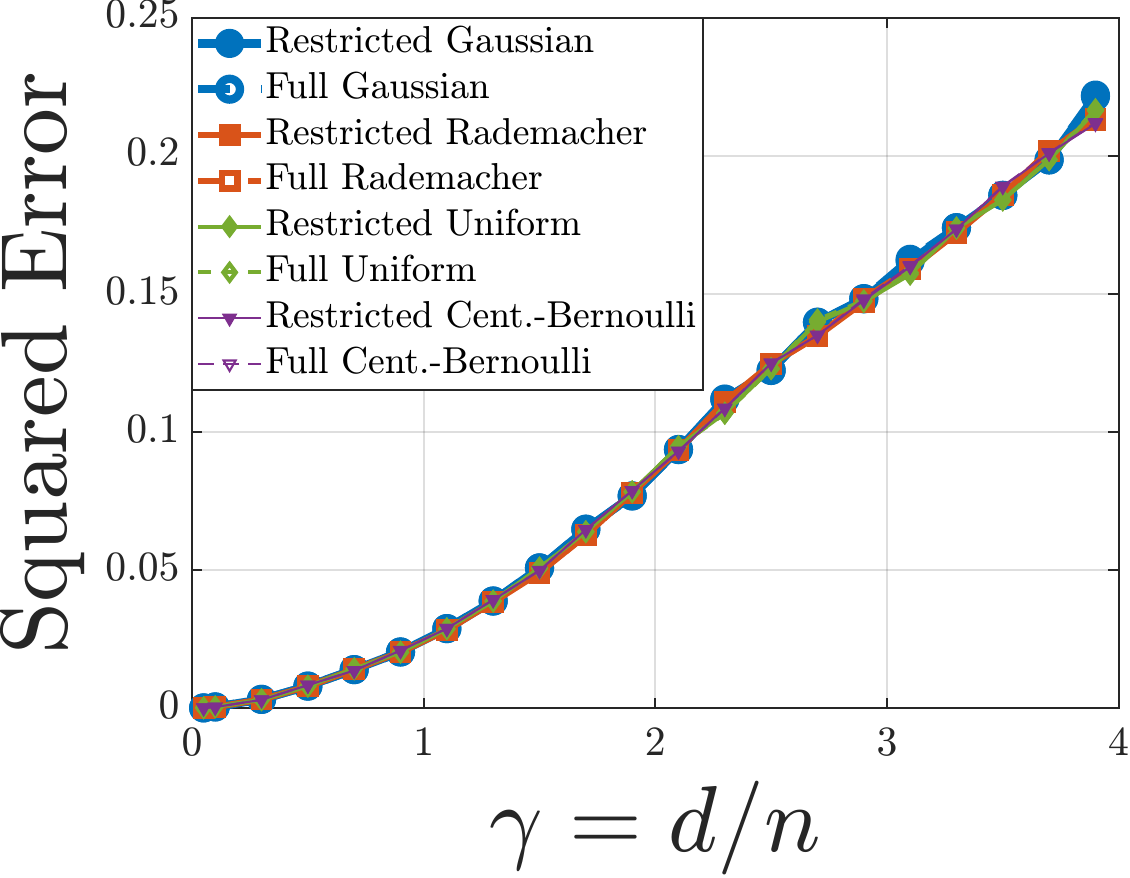}
    \caption{Geometric profile ($r=0.25$)}
\end{subfigure}
\hfill
\begin{subfigure}[t]{0.31\linewidth}
    \includegraphics[width=\linewidth]{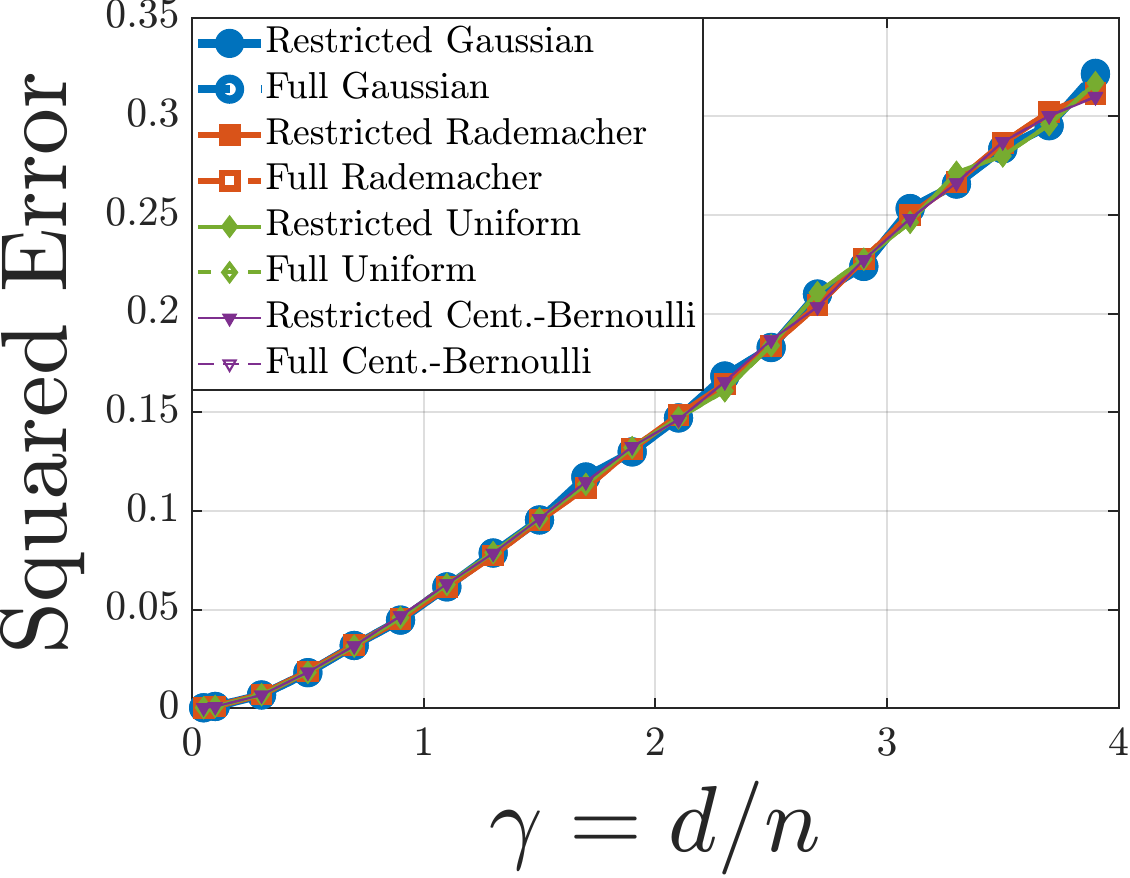}
    \caption{Power profile ($p=1.5$)}
\end{subfigure}
\hfill
\begin{subfigure}[t]{0.31\linewidth}
    \includegraphics[width=\linewidth]{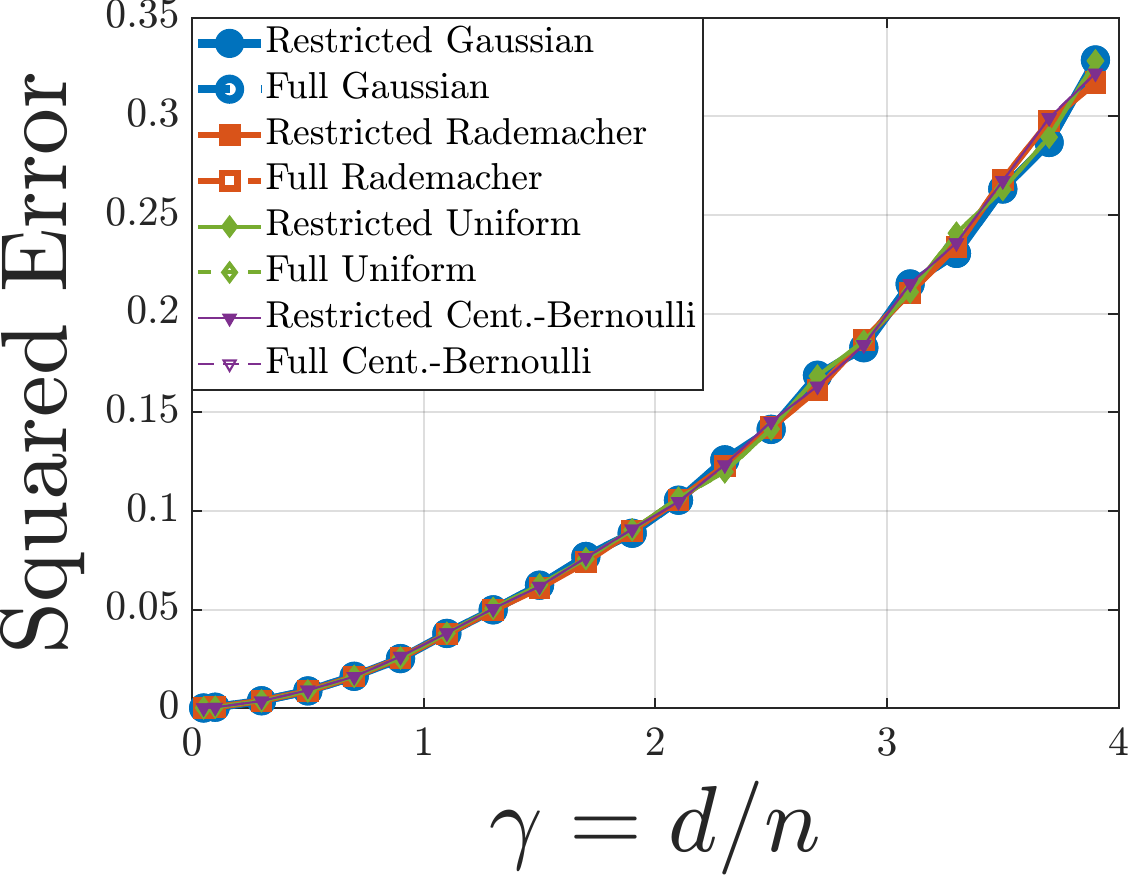}
    \caption{Anchor-diffuse profile ($r=0.25$, $\vartheta=0.75$)}
\end{subfigure}
\caption{SE curves for uniformly random orthogonal matrices under the three signal profiles. The simulation protocol and latent matrix distributions are the same as in Fig.~\ref{fig:sim-dst}.}
\label{fig:sim-haar}
\end{figure*}

\begin{figure*}[!htb]
\centering
\begin{subfigure}[t]{0.31\linewidth}
    \includegraphics[width=\linewidth]{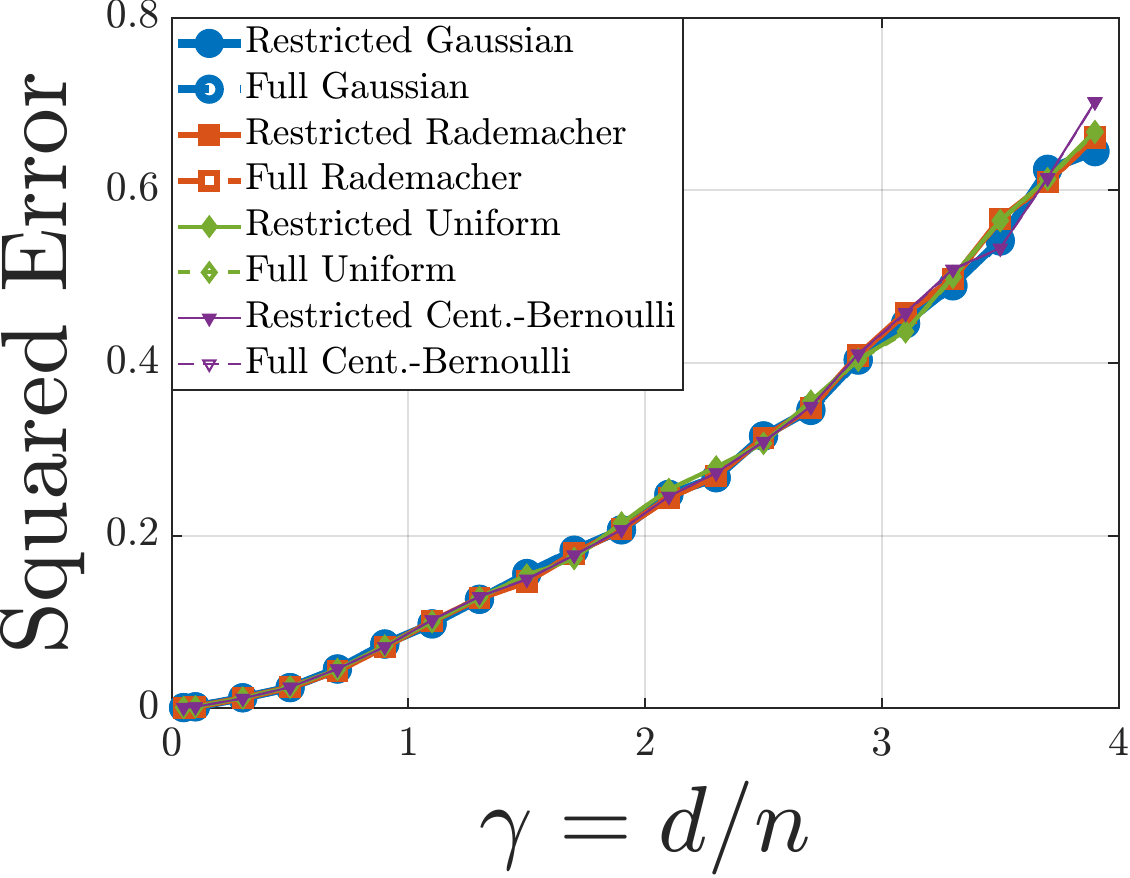}
    \caption{Geometric profile ($r=0.25$)}
\end{subfigure}
\hfill
\begin{subfigure}[t]{0.31\linewidth}
    \includegraphics[width=\linewidth]{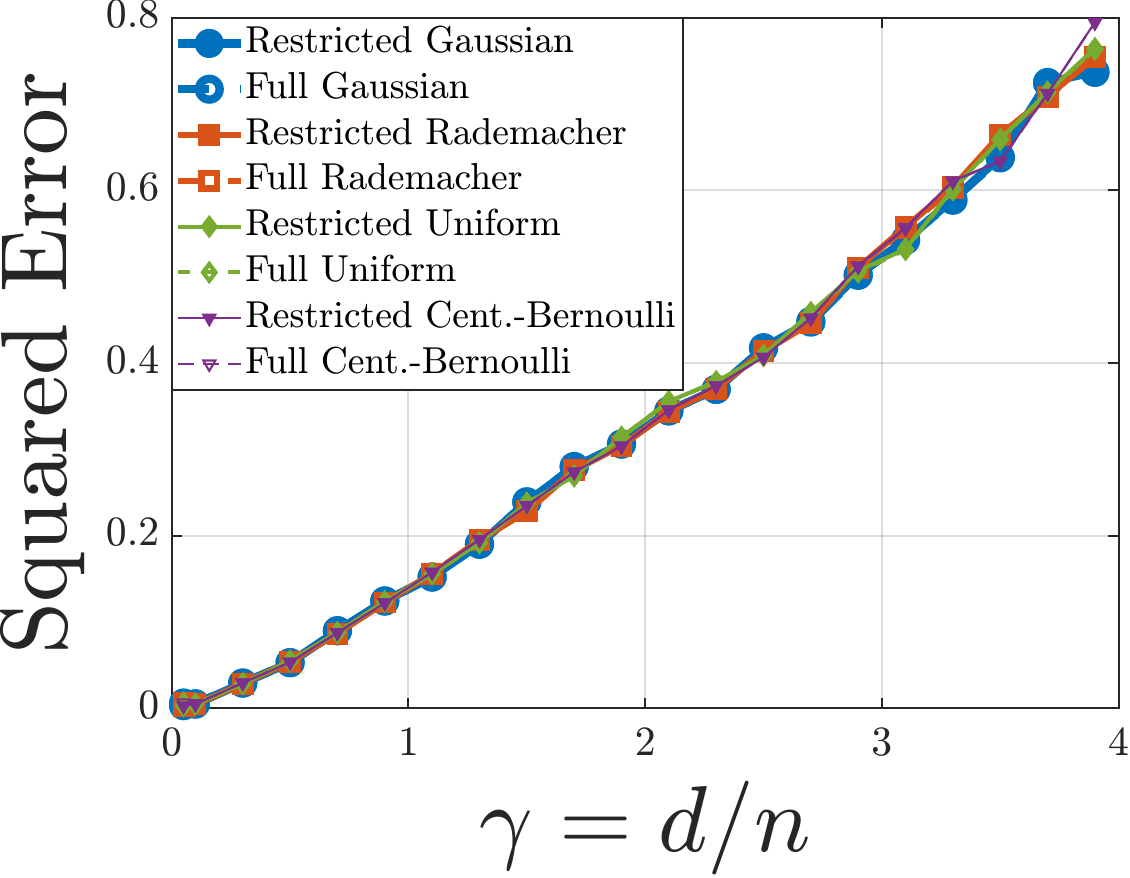}
    \caption{Power profile ($p=1.5$)}
\end{subfigure}
\hfill
\begin{subfigure}[t]{0.31\linewidth}
    \includegraphics[width=\linewidth]{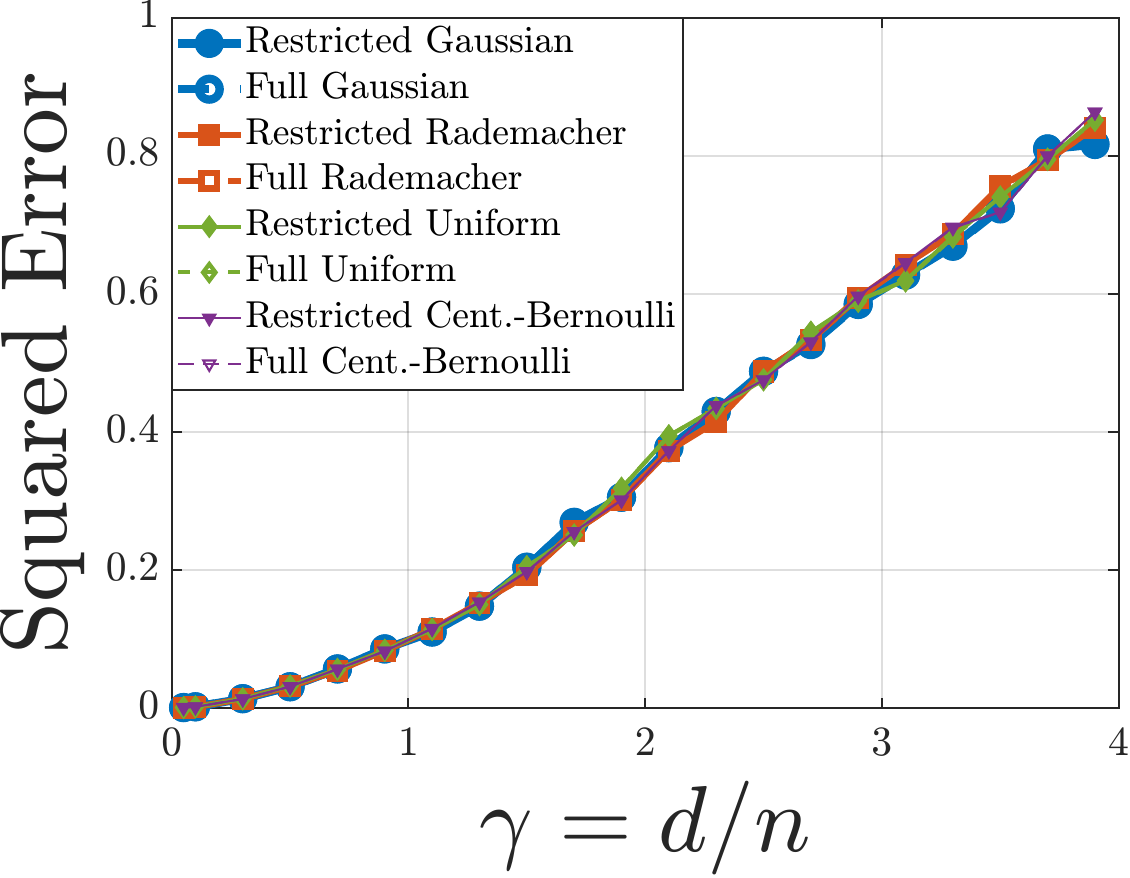}
    \caption{Anchor-diffuse profile ($r=0.25$, $\vartheta=0.75$)}
\end{subfigure}
\caption{SE curves for scaled Gaussian-dependent matrices under the three signal profiles. The simulation protocol and latent matrix distributions are the same as in Fig.~\ref{fig:sim-dst}.}
\label{fig:sim-gaussian}
\end{figure*}

In our experiments we consider the model \eqref{eq:mainproblem}, together with the $X=AZB$ covariate model. We let $n= 500$, and create a grid $\gamma \in[0.05,4]$, and let $d = \lceil \gamma n\rceil$, accordingly. We generate the sparse signal vector $\beta^\star$ with $\|\beta^\star\|_2^2 = 1$, and choose the support size to respect Assumption~\ref{ass:Sparse} as $k = \lfloor n^\frac{1}{3}\rfloor
$. We repeat the experiment for the three signal profiles in Remark~\ref{rem:remarkaboutsignals}: the geometrically decaying profile \eqref{eq:geomsig} with decay rate $r=0.25$, the normalized power profile \eqref{eq:powersig} with exponent $p=1.5$, and the anchor-diffuse profile \eqref{eq:anchorsig} with  $r=0.25$, and $\vartheta=0.75$, so that $75\%$ of the signal energy is placed on the anchor coordinate.
Furthermore, we generate the i.i.d.\ noise vector $\xi$ in \eqref{eq:mainproblem} with $\sigma=0.2$. We further note that we set the lasso regularization parameter to $\lambda = \gamma$ in all simulations and obtain the lasso solution via the fast iterative shrinkage-thresholding algorithm (FISTA) \cite{beck2009fast}.

Throughout all the simulations, for each matrix type and each value of $\gamma$, we calculate the average over $50$ trials. In each trial, we compare the SEs of the full and oracle lasso problems for different sub-Gaussian distributions of the $Z$ matrix. Namely, we compare the standard Gaussian, Rademacher, Uniform, and Bernoulli-centered distributions with a success probability of $0.2$ cases.

To effectively demonstrate the robustness of the linearly dependent framework, we choose $A$ and $B$ from three different models. Firstly, we consider the generic form of $A = U_A D_A$ and $B = U_B D_B$. In this setting, $U_A, U_B$ are DST matrices. Moreover, $D_A, D_B$ are diagonal matrices with a linear spectrum inspired by \cite{moniri_hassani_dependent_ridge,dudeja2024spectral}. Secondly, we consider both $A$ and $B$ to be drawn from a scaled Gaussian distribution as $A\sim \mathcal{N}(0,\frac{2.25I_n}{n})$ and $B\sim \mathcal{N}(0,\frac{2.25I_d}{d})$. Lastly, we consider uniformly drawn orthogonal matrices to define $A$ and $B$. That is, $A = Q_AD_AQ_A^\top $ and $B = Q_BD_BQ_B^\top $, where $Q_A$ and $Q_B$ are uniformly drawn orthogonal matrices. Furthermore,  $\mu_A = \frac{1}{3}\delta_2 + \frac{1}{3}\delta_4+\frac{1}{3}\delta_6
$ governs the spectrum of $A^\top A$, and $\mu_B = \frac{1}{4}\delta_1 + \frac{1}{4}\delta_2+\frac{1}{4}\delta_3+\frac{1}{4}\delta_4
$ governs the spectrum of $B^\top B$, respectively. These choices of $A$ and $B$ are all dense matrices that allow mixing of the coordinates, thereby truly achieving the linearly dependent model we described.



Figures~\ref{fig:sim-dst}, \ref{fig:sim-haar}, and \ref{fig:sim-gaussian} show the results for the DST, uniformly random orthogonal, and scaled Gaussian mixing matrices, respectively, with one panel for each signal profile. Across all nine settings, the SE values for the Rademacher, uniform, and centered-Bernoulli distributions closely track the Gaussian curve. Furthermore, the full and oracle estimator curves remain closely matched for all distributions and signal profiles, illustrating the oracle-to-full universality transfer mechanism predicted by our theoretical results.



\section{Conclusion}\label{Sec:SectionVI}

We established Gaussian universality of the lasso for the linearly dependent design $X= AZB$ in the sparse regime $k= o(n)$. The oracle estimator concentrates around a deterministic lasso problem, and a directional strict dual-feasibility condition makes this estimator the unique full lasso solution with high probability. We verified the conditions and nontriviality thresholds for example deterministic and random mixing matrix families. The simulations illustrate the resulting universality across several latent distributions and sparse signal profiles.

\section*{Acknowledgments}
The authors used ChatGPT (OpenAI) for assistance with exposition, organization, notation, and proof refinement during the preparation of this manuscript. The authors conceived the project, developed the mathematical framework, proved the results, and made all final editorial and mathematical decisions.

\section*{Funding}
This work was supported in part by a Hellman Fellowship from the University of California.

\bibliographystyle{abbrv}  
\bibliography{refs_final_Consistent}

\appendix
\renewcommand*{\theHsection}{appendix.\Alph{section}}
\renewcommand*{\theHsubsection}{appendix.\Alph{section}.\arabic{subsection}}
\renewcommand*{\theHequation}{appendix.\Alph{section}.\arabic{equation}}

\section{Proof of Proposition~\ref{prop:conditional-transfer}}\label{app:PropTransProof}

\begin{proof}
We consider all of the coefficients and definitions provided throughout the paper, especially Section~\ref{Sec:SectionII}, and therefore do not repeat them here. As stated in the proposition, we assume $\mathbb{P}(\mathcal{G}_n)\to 1$. We first prove that the convergence-in-probability results in Proposition~\ref{prop:RestrictedProp} and Theorem~\ref{thm:lassoFullUniversality} hold. On $\mathcal{F}_n$, the tuple $\Theta_n$ is independent of the latent matrix, its Gaussian counterpart, and the noise vector. Let $\theta\in\mathcal{C}_n$ be an admissible deterministic tuple, and let $\mathbb{P}_\theta$ denote the probability where only $(W,\xi)$ are random. Fix $W\in\{Z,G\}$. We claim that, for every $\epsilon>0$,
\begin{equation}\label{eq:whatweprove1}
r_{n}^\mathrm{oracle}(\epsilon) \coloneqq \sup_{\theta\in \mathcal{C}_n} \mathbb P_\theta\left(
\big\|\hat\beta_S^{\mathrm{oracle}}(W)-\hat{\beta}^{\mathrm{det}}\big\|_2>\epsilon
\right)\to 0.
\end{equation}
We prove this by contradiction. Suppose $r_{n}^\mathrm{oracle}(\epsilon)\not\to0$. Then, there exist a subsequence $(n_m)_m$ and a constant $c_0>0$ such that $r_{n_m}^\mathrm{oracle}(\epsilon)\geq c_0$. By the definition of the supremum, there exists $\theta_{n_m}\in \mathcal{C}_{n_m}$ such that
\begin{equation}\label{eq:contradiction}
\mathbb{P}_{\theta_{n_m}}\left(
\big\|\hat\beta_S^{\mathrm{oracle}}(W)-\hat{\beta}^{\mathrm{det}}\big\|_2>\epsilon
\right)\geq \frac {c_0} 2.
\end{equation}
The triangular array $(\theta_{n_m})_m$ satisfies Assumptions~\ref{ass:Sparse}, \ref{ass:covstruct}, and~\ref{ass:lassoUniqueness} with the same constants and with $|S|\leq\bar k_{n_m}=o(n_m)$. Proposition~\ref{prop:RestrictedProp}, applied along this subsequence, makes the probability in \eqref{eq:contradiction} tend to zero, which is a contradiction and proves \eqref{eq:whatweprove1}.

Next, define the failure event
\begin{equation}
\mathcal{E}_{n,\epsilon}^\mathrm{oracle} \coloneqq \left\{\big\|\hat\beta_S^{\mathrm{oracle}}(W)-\hat{\beta}^{\mathrm{det}}\big\|_2>\epsilon\right\}.
\end{equation}
On $\mathcal G_n$, we have $\mathbb{P}(\mathcal{E}_{n,\epsilon}^\mathrm{oracle}\mid \mathcal{F}_n)\leq r_{n}^\mathrm{oracle}(\epsilon)$. Therefore, by the tower property,
\begin{align}
\mathbb{P}\left(\mathcal{E}_{n,\epsilon}^\mathrm{oracle}\right)
&=\mathbb E\left[\mathbf1_{\mathcal G_n}\mathbb P(\mathcal{E}_{n,\epsilon}^\mathrm{oracle}\mid\mathcal F_n)\right]
+\mathbb E\left[\mathbf1_{\mathcal G_n^c}\mathbb P(\mathcal{E}_{n,\epsilon}^\mathrm{oracle}\mid\mathcal F_n)\right]\nonumber\\
&\leq r_{n}^\mathrm{oracle}(\epsilon)+\mathbb P(\mathcal G_n^c)\to 0.
\end{align}
This proves the conclusion of Proposition~\ref{prop:RestrictedProp}. The same contradiction and conditioning argument, applied to the failure event that the full lasso is not unique or does not equal the oracle estimator, proves the conclusion of Theorem~\ref{thm:lassoFullUniversality}.

Next, we justify the expectation-level statements in Corollaries~\ref{cor:Corollary} and~\ref{cor:Corollary2}. Lemma~\ref{lem:moment} is the key ingredient in the deterministic case. Conditioned on $\Theta_n$, its proof gives
\begin{equation}
\mathbb E\left[\|AWB\beta^\star\|_2^8\mid\Theta_n\right]
\leq Cn^4\alpha^8\|A\|_{\mathrm{op}}^8\|B\|_{\mathrm{op}}^8.
\end{equation}
Taking expectations and applying the Cauchy--Schwarz inequality, under condition \eqref{eq:preliminary-moment-condition} we have
\begin{align}
\mathbb E\|AWB\beta^\star\|_2^8
&\leq Cn^4\alpha^8
\mathbb E[\|A\|_{\mathrm{op}}^8\|B\|_{\mathrm{op}}^8]\nonumber\\
&\leq Cn^4\alpha^8
(\mathbb E\|A\|_{\mathrm{op}}^{16})^{1/2}
(\mathbb E\|B\|_{\mathrm{op}}^{16})^{1/2}\leq C'n^4.
\end{align}
Furthermore, since $\mathbb{E}\|\xi\|_2^8=O(n^4)$, Lemma~\ref{lem:moment} holds. The uniform-integrability arguments used in Corollaries~\ref{cor:Corollary} and~\ref{cor:Corollary2} therefore apply unchanged.
\end{proof}

\section{Proof of \eqref{eq:eqprovegauss}}\label{app:appB}

\begin{proof}
Let $\mathcal H_n\coloneqq\sigma(A,B_S,S,\beta^\star)$. Because $(S,\beta^\star)$ is independent of $B$, each inactive column $b_j$, $j\in S^c$, is independent of $\mathcal H_n$ and distributed as $\mathcal N(0,(\omega/d)I_d)$. Since $h_n$ is $\mathcal H_n$-measurable,
\begin{equation}
\tau_nb_j^\top B_Sh_n\mid\mathcal H_n
\sim\mathcal N\left(0,\frac{\omega\tau_n^2}{d}\|B_Sh_n\|_2^2\right).
\end{equation}
Choose constants $C_\tau,C_B<\infty$ such that
\begin{equation}
\mathcal E_n\coloneqq\{\tau_n\leq C_\tau,\ \|B_S\|_\mathrm{op}\leq C_B\}
\end{equation}
has probability tending to one. On $\mathcal E_n$, optimality of $\hat\beta^\mathrm{det}$ gives
\begin{equation}
\|\hat\beta^\mathrm{det}\|_1
\leq\frac{\tau_n\|B_S\beta_S^\star\|_2^2}{2\lambda}
\leq\frac{C_\tau C_B^2\alpha^2}{2\lambda},
\end{equation}
and therefore $\|B_Sh_n\|_2\leq H$ for a deterministic constant $H<\infty$. Thus, on $\mathcal E_n$, the conditional variance above is at most $V/d$, where $V\coloneqq\omega C_\tau^2H^2$. The Gaussian tail bound and a union bound yield
\begin{equation}
\mathbb P\left(\max_{j\in S^c}|\tau_nb_j^\top B_Sh_n|>t\right)
\leq\mathbb P(\mathcal E_n^c)+2d\exp\left(-\frac{dt^2}{2V}\right).
\end{equation}
Taking $t=M\sqrt{\log(d)/d}$ with $M^2>2V$ sufficiently large proves \eqref{eq:eqprovegauss}.
\end{proof}

\section{Proofs of \eqref{eq:haar-active-block} and \eqref{eq:proveHaarGood}}\label{app:appC}

In this section, we condition on $S$ throughout. Let $C=B^\top B=Q_B\Lambda Q_B^\top$, where $\Lambda \coloneqq D_B^2$, $\|\Lambda\|_\mathrm{op}\leq L_B$, and $\bar b_d=d^{-1}\operatorname{tr}(\Lambda)$. Consider $P_S\in \mathbb{R}^{d\times k}$ such that $B_S=BP_S$.

\begin{proof}[Proof of \eqref{eq:haar-active-block}]
For fixed $u\in\mathbb S^{k-1}$, define
\begin{equation}
f_u(Q)\coloneqq u^\top P_S^\top Q\Lambda Q^\top P_Su.
\end{equation}
The function $f_u$ is $2L_B$-Lipschitz under the Frobenius metric, and invariance of the Haar measure gives $\mathbb Ef_u(Q_B)=\bar b_d$. The concentration inequality on the special orthogonal group~\cite[Theorem~5.2.7]{vershynin2018hdp} therefore applies on each connected component of the orthogonal group. Indeed, right multiplication by a diagonal reflection that commutes with $\Lambda$ maps the two components onto one another without changing $f_u$. Hence,
\begin{equation}
\mathbb P\bigl(|f_u(Q_B)-\bar b_d|>t\bigr)
\leq2\exp\left(-\frac{c_0dt^2}{L_B^2}\right).
\end{equation}
Let $\mathcal N$ be a $1/4$-net of $\mathbb S^{k-1}$ with $|\mathcal N|\leq9^k$. The standard net bound for symmetric matrices and a union bound imply
\begin{equation}
\mathbb P\left(\|P_S^\top CP_S-\bar b_dI_k\|_\mathrm{op}>2t\right)
\leq2\exp\left(k\log9-\frac{c_0dt^2}{L_B^2}\right).
\end{equation}
For each fixed $t>0$, the right-hand side tends to zero because $k=o(d)$. Since $\bar b_d\to\bar b$, this proves \eqref{eq:haar-active-block}.
\end{proof}

\begin{proof}[Proof of \eqref{eq:proveHaarGood}]
It suffices to prove
\begin{equation}\label{eq:eq98s}
\max_{j\in S^c} \|r_j\|_2 = O_\mathbb{P}\left(\sqrt{\frac{k+\log(d)}{d}}\right),
\end{equation}
where $r_j=P_S^\top Ce_j$. For a fixed $j\in S^c$ and $u\in\mathbb{S}^{k-1} \coloneqq \{x\in\mathbb{R}^k \mid \|x\|_2 =1\}$, define $g_{j,u}(Q) \coloneqq e_j^\top Q\Lambda Q^\top P_Su$. This function is $2L_B$-Lipschitz under the Frobenius norm metric. Indeed, for $Q,Q'\in O(d)$,
\begin{align}
\left|g_{j,u}(Q)-g_{j,u}(Q')\right|&\leq \left|e_j^\top (Q-Q')\Lambda Q^\top P_S u\right|+
\left|e_j^\top Q'\Lambda (Q-Q')^\top P_S u\right|\leq 2L_B\|Q-Q'\|_F.
\end{align}
Moreover, $\mathbb Eg_{j,u}(Q_B)=\bar b_de_j^\top P_Su=0$. The same concentration inequality therefore gives
\begin{equation}
\mathbb{P}\left(|g_{j,u}(Q_B)|>t\right)\leq2\mathrm{e}^{-\frac{c_0dt^2}{L_B^2}}.
\end{equation}
Next, let $\mathcal{N}$ be a $\frac{1}{2}$-net of $\mathbb{S}^{k-1}$ with $|\mathcal N|\leq5^k$. Then, $\|r_j\|_2\leq 2\max_{v\in \mathcal{N}}|r_j^\top v|$. By a union bound over $j\in S^c$ and $v\in \mathcal{N}$, we have
\begin{align}
\mathbb{P}\left(\max_{j\in S^c}\|r_j\|_2>2t\right)&\leq\sum_{j\in S^c}\sum_{v\in \mathcal{N}}\mathbb{P}\left(|r_j^\top v|>t\right)
\leq 2\exp\left[\log(d)+k\log(5)-\frac{c_0dt^2}{L_B^2}\right].
\end{align}
Choosing $t=M\sqrt{\frac{k+\log (d)}{d}}$, with $c_0M^2/L_B^2>\max\{1,\log5\}$, proves \eqref{eq:eq98s}. Finally,
\begin{equation}
\max_{j\in S^c}|\tau_nb_j^\top B_Sh_n|
\leq\tau_n\|h_n\|_2\max_{j\in S^c}\|r_j\|_2.
\end{equation}
Bounded spectra imply $\sup_n\tau_n<\infty$ and optimality of $\hat\beta^\mathrm{det}$ gives $\sup_n\|h_n\|_2<\infty$. This proves \eqref{eq:proveHaarGood}.
\end{proof}

\section{Proof of Lemma~\ref{lem:moment}}\label{app:appD}

\begin{proof}
Let $v =B\beta^\star$  since $A$ and $B$ are bounded in operator norm, we denote $\|A\|_\mathrm{op}\leq C_A$, and $\|B\|_\mathrm{op}\leq C_B$, and observe that $v$ is deterministic and bounded
$\|v\|_2 = \|B\beta^\star\|_2\leq\|B\|_\mathrm{op}\|\beta^\star\|_2=C_B\alpha$.

In the next step, we consider $Wv$ that is made from independent, sub-Gaussian random variables with uniformly bounded $8$-th moments. To show this mathematically, let 
\begin{equation}
Y_i  \coloneqq  (Wv)_i
= \sum_{j=1}^d W_{ij}v_j,
\quad i=1,\ldots,n,
\end{equation}
therefore $\|Wv\|_2^2=\sum_{i=1}^n Y_i^2$. By Khintchine's inequality~\cite[Theorem~2.7.5]{vershynin2018hdp}
for $p=8$, we have
\begin{align}
\|Y_i\|_{L^8} = \left\|\sum_{j=1}^dv_j W_{ij}\right\|_{L^8}\leq 2\sqrt{2}C_1K\left(\sum_{j=1}^d v_j^2 \right) ^\frac{1}{2}
\leq 2\sqrt{2}C_1KC_B\alpha \eqqcolon C.
\end{align}
Consequently, $\sup_{n,i}\mathbb{E}|Y_i|^8\leq C$, which confirms the boundedness claim. In the next step, we want to show $\mathbb{E}\|Wv\|_2^8\leq C_2n^4$. Let

\begin{equation}
\mathbb{E}\|Wv\|_2^8
= \mathbb{E}\left(\sum_{i=1}^n Y_i^2\right)^4 = \sum_{i_1,i_2,i_3,i_4=1}^n \mathbb{E}[Y_{i_1}^2Y_{i_2}^2Y_{i_3}^2Y_{i_4}^2],
\end{equation}
By the general form of Hölder's inequality (see~\cite[Theorem~1]{mitrinovic2013classical}), we have
\begin{align}\label{eq:Holder}
\mathbb{E}\left[Y_{i_1}^2Y_{i_2}^2Y_{i_3}^2Y_{i_4}^2\right]&\leq \mathbb{E}\left[|Y_{i_1}^2|\cdot |Y_{i_2}^2| \cdot |Y_{i_3}^2|\cdot |Y_{i_4}^2|\right]\leq \prod_{r=1}^4\left(\mathbb{E}|Y_{i_r}|^8\right)^\frac 1 4
\end{align}
Considering the $8$-th moment bound and \eqref{eq:Holder} we have $\mathbb{E}\|Wv\|_2^8\leq \sum_{i_1,i_2,i_3,i_4 = 1}^n C_2 = C_2n^4$. Next, we consider the noise term and want to show $\mathbb{E}\|\xi\|_2^8\leq C_3n^4$. Let
\begin{equation}
\mathbb{E}\|\xi\|_2^8 = \mathbb{E}\left(\sum_{i=1}^n[\xi_i^2]\right)^4 = \sum_{i_1,i_2,i_3,i_4=1}^n\mathbb{E}\left[\xi_{i_1}^2\xi_{i_2}^2\xi_{i_3}^2\xi_{i_4}^2\right]. 
\end{equation}
Similar to \eqref{eq:Holder} we apply Hölder's inequality and obtain
\begin{align}\label{eq:Holder2}
\mathbb{E}\left[\xi_{i_1}^2\xi_{i_2}^2\xi_{i_3}^2\xi_{i_4}^2\right]&\leq \mathbb{E}\left[|\xi_{i_1}^2| \cdot |\xi_{i_2}^2|\cdot |\xi_{i_3}^2|\cdot |\xi_{i_4}^2|\right]\leq \prod_{r=1}^4\left(\mathbb{E}|\xi_{i_r}|^8\right)^\frac 1 4.
\end{align}
To proceed, we must obtain the $8$-th moment of each noise coordinate $\xi_i \sim \mathcal{N}(0,\sigma^2)$. A well-known identity of zero-mean Gaussian random variables such as $\xi_i$ is $\mathbb{E}[\xi_i^{2m}] = \sigma^{2m}(2m-1)!!$ (double factorial), therefore
\begin{align}\label{eq:noisebound}
\mathbb{E}\left[\xi_{i_1}^2\xi_{i_2}^2\xi_{i_3}^2\xi_{i_4}^2\right]&\leq \mathbb{E}\left[|\xi_{i_1}^2|\cdot |\xi_{i_2}^2|\cdot |\xi_{i_3}^2|\cdot |\xi_{i_4}^2|\right]\leq 105\sigma^8\leq C_3,\nonumber\\
\mathbb{E}\|\xi\|_2^8&\leq \sum_{i_1,i_2,i_3,i_4=1}^n C_3=C_3n^4. 
\end{align}
Next, we prove the bound $\mathbb{E}\|y(W)\|_2^8\leq C_4n^4$. By the triangle inequality, we have $\|y(W)\|_2 = \|AWv+\xi\|_2 \leq \|AWv\|_2+\|\xi\|_2$,
Furthermore,
$\mathbb{E}\|y(W)\|_2^8
\leq 2^7\mathbb{E}\|AWv\|_2^8
+2^7\mathbb{E}\|\xi\|_2^8$.
By standard norm bounds and Assumption~\ref{ass:covstruct} we have $\|AWv\|_2\leq \|A\|_\mathrm{op}\|Wv\|_2\leq C_A\|Wv\|_2$, which leads to
\begin{equation}\label{eq:yfirstgoomb}
\begin{aligned}
\mathbb{E}\|AWv\|_2^8\leq C_A^8\mathbb{E}\|Wv\|_2^8\leq C_A^8C_2n^4 \coloneqq Q_1n^4,
\end{aligned}
\end{equation}
Considering \eqref{eq:noisebound} and \eqref{eq:yfirstgoomb}, we deduce that
\begin{equation}
\begin{aligned}
\mathbb{E}\|y(W)\|_2^8 \leq 2^7(Q_1+C_3)n^4 \coloneqq C_4n^4,
\end{aligned}
\end{equation}
Therefore, we have the following bound
\begin{equation}\label{eq:firstboundproof}
\sup_n\mathbb{E}\left(\frac{\|y(W)\|_2^2}{n}\right)^4<\infty.
\end{equation}

Let $\hat{\beta}$ denote either the full estimator in \eqref{eq:lassoproblem} or the oracle estimator in \eqref{eq:OracleObj}. The zero vector is feasible for both \eqref{eq:lassoproblem} and \eqref{eq:OracleObj}. This is apparent from \eqref{eq:lassoproblem}, and feasibility holds for \eqref{eq:OracleObj} because $\mathrm{supp}(0) = \emptyset \subseteq S$; therefore, feasibility holds. Therefore, for both objective functions and their supports, we have
\begin{align}\label{eq:optbetaineq}
\frac{1}{2n}\left\|y(W)-X(W)\hat{\beta}\right\|_2^2
+\lambda\|\hat{\beta}\|_1
\leq
\frac{1}{2n}\left\|y(W)-X(W)\cdot 0\right\|_2^2
=\frac{1}{2n}\|y(W)\|_2^2.
\end{align}
By ignoring the nonnegative terms on the left-hand side of \eqref{eq:optbetaineq}, we have
\begin{equation}\label{eq:optimalbound}
\begin{aligned}
\lambda\|\hat{\beta}\|_1
&\leq\frac{1}{2n}\|y(W)\|_2^2.
\end{aligned}
\end{equation}
To bound $\|\hat{\beta}-\beta^\star\|_2^2$, consider
$\|\hat{\beta}-\beta^\star
\|_2^2\leq
 2\|\hat{\beta}\|_2^2+2\|\beta^\star\|_2^2\leq 2\|\hat{\beta}\|_2^2+2\alpha^2$.
Since $\|\hat{\beta}\|_2\leq \|\hat{\beta}\|_1$, we deduce that
\begin{equation}
    \|\hat{\beta}-\beta^\star\|_2^2\leq \frac{\|y(W)\|_2^4}{2n^2\lambda^2}+2\alpha^2.
\end{equation}
Furthermore, we have
\begin{equation}
\begin{aligned}
 \|\hat{\beta}-\beta^\star\|_2^4\leq \frac{\|y(W)\|_2^8}{2n^4\lambda^4}+8\alpha^4
\end{aligned}
\end{equation}
Taking expectations provides
\begin{equation}\label{eq:boundbeforefinal}
\mathbb{E}\|\hat{\beta}-\beta^\star\|_2^4\leq \frac{1}{2\lambda^4}\mathbb{E}\left(\frac{\|y(W)\|_2^2}{n}\right)^4+8\alpha^4.
\end{equation}
By \eqref{eq:bound1} and \eqref{eq:boundbeforefinal}, taking the supremum yields \eqref{eq:bound2}, concluding the proof.
\end{proof}

\section{Proof of Lemma~\ref{lem:bilinear}}\label{app:appE}

\begin{proof}

Let $H = A^\top A\in\mathbb{R}^{n\times n}$. To begin the proof, consider the following preliminaries 
\begin{align}
\|H\|_\mathrm{op} &= \|A^\top A\|_\mathrm{op} = \|A\|_\mathrm{op}^2\leq C_A^2,\nonumber\\
\|H\|_F &= \|A^\top A\|_F\leq\|A^\top\|_\mathrm{op}\|A\|_F = \|A\|_\mathrm{op}\|A\|_F,
\end{align}
Furthermore, since $\|A\|_F^2 = \sum_{r=1}^ns_r(A)^2\leq n\|A\|_\mathrm{op}^2$, where $s_r(A)$ is the $r$-th singular value of $A$, we have $\|H\|_F\leq C_A^2\sqrt{n}$.
Let $x \coloneqq \mathrm{vec}(W)\in\mathbb{R}^{nd}$, and let $M \coloneqq (pq^\top)\otimes H\in\mathbb{R}^{nd\times nd}$, and consider
\begin{align}
p^\top W^\top HWq
&= \sum_{i=1}^d\sum_{j=1}^d p_iq_j(W^\top HW)_{ij}
= \sum_{i,j=1}^d\sum_{k,\ell=1}^n
p_iq_jH_{k\ell}W_{ki}W_{\ell j}.
\end{align}
We further observe,
\begin{align}
x^\top Mx
&=\sum_{i,j=1}^d\sum_{k,\ell=1}^n
(pq^\top)_{ij}H_{k\ell}W_{ki}W_{\ell j}=\sum_{i,j=1}^d\sum_{k,\ell=1}^n
p_iq_jH_{k\ell}W_{ki}W_{\ell j}.
\end{align}
So, $x^\top Mx=p^\top W^\top HWq$. Let $M_\mathrm{Sym} = \frac
{M+M^\top}{2}$, therefore, $x^\top Mx = x^\top M_\mathrm{Sym}x$ for all real $x$. Consider,
\begin{equation}
\|M_\mathrm{Sym}\|_\mathrm{F}
=\left\|\frac{M+M^\top}{2}\right\|_\mathrm{F}\leq \frac{1}{2}\|M\|_\mathrm{F}
+\frac{1}{2}\|M^\top\|_\mathrm{F}=\|M\|_\mathrm{F},
\end{equation}
where $\|M\|_\mathrm{F}
=\|(pq^\top)\otimes H\|_\mathrm{F}
=\|p\|_2\|q\|_2\|H\|_F$. This gives $\|M_\mathrm{Sym}\|_F
\leq L^2C_A^2\sqrt{n}$. Similarly, $\|M_\mathrm{Sym}\|_\mathrm{op}
\leq L^2C_A^2$ We know that $x$ has independent, zero-mean, and sub-Gaussian entries. Furthermore, $R = \frac{1}{n}M_\mathrm{Sym}$ is deterministic; thus, the Hanson-Wright inequality ( \cite[Theorem~6.2.2]{vershynin2018hdp}) gives
\begin{equation}
\mathbb{P}\left(\left|x^\top Rx-\mathbb{E}[x^\top Rx]\right|>t\right)
\leq 2\exp\left\{-c\min\left(
\frac{t^2}{K^4\|R\|_F^2},
\frac{t}{K^2\|R\|_\mathrm{op}}
\right)\right\},
\end{equation}
where $K \coloneqq \sup_{n,i,j}\|W_{ij}\|_{\psi_2}$. By definition of $R$, we deduce that $\|R\|_\mathrm{op}\leq \frac{L^2C_A^2}{n}$ and $\|R\|_F \leq
 \frac{L^2C_A^2}{\sqrt{n}}$. Therefore, it is evident that $\frac{t^2}{K^4\|R\|_F^2}\geq \frac{nt^2}{K^4L^4C_A^4}$, and $\frac{t}{K^2\|R\|_\mathrm{op}}\geq \frac{nt}{K^2L^2C_A^2}$. Thus, given a constant $c>0$, we have
\begin{equation}\label{eq:hansonthelord}
\mathbb{P}\left(
\left|\frac{1}{n}p^\top W^\top HWq
-\mathbb{E}\left[\frac{1}{n}p^\top W^\top HWq\right]\right|>t
\right)\leq C\exp\{-cn\min(t,t^2)\},
\end{equation}
where $C\geq 2$.

To calculate $\mathbb{E}\left[\frac{1}{n}p^\top W^\top HWq\right]$, consider
\begin{align}\label{eq:part1lem2last}
\mathbb{E}\left[\frac{1}{n}p^\top W^\top HWq\right]
&= \frac{1}{n}\sum_{i,j=1}^d\sum_{k,\ell=1}^n
p_iq_jH_{k\ell}\mathbb{E}[W_{ki}W_{\ell j}]= \frac{1}{n}\sum_{i=1}^d\sum_{k=1}^n p_iq_iH_{kk}= \frac{1}{n}\left(\sum_{k=1}^nH_{kk}\right)
\left(\sum_{i=1}^d p_iq_i\right)\nonumber\\
&=\frac{1}{n}\mathrm{tr}(H)p^\top q=\tau_np^\top q.
\end{align}
By substituting \eqref{eq:hansonthelord}, in \eqref{eq:part1lem2last} we obtain \eqref{eq:probinequal1}.
 The next step is to prove \eqref{eq:probinequal2}. Let $a \coloneqq A^\top\xi\in\mathbb{R}^n$, then
\begin{align}
\frac{1}{n}p^\top W^\top A^\top\xi
&=\frac{1}{n}p^\top W^\top a
=\sum_{\ell=1}^d\sum_{i=1}^n
\frac{a_ip_\ell}{n}W_{i\ell}=\sum_{\ell=1}^d\sum_{i=1}^n c_{i\ell}W_{i\ell},
\end{align}
where $c_{i\ell} = \frac{a_ip_\ell}{n} = \frac{(A^\top\xi)_ip_\ell}{n}$ are deterministic conditioned on $\xi$. We consider the vector of coefficients as $c = (c_{il})\in\mathbb{R}^{nd}$, and obtain an upper bound on $\|c\|_2^2$, as
\begin{align}
\|c\|_2^2
=\sum_{i=1}^n\sum_{\ell=1}^d
\left(\frac{(A^\top \xi)_i p_\ell}{n}\right)^2
=\frac{1}{n^2}\left(\sum_{i=1}^n(A^\top \xi)_i^2\right)
\left(\sum_{\ell=1}^d p_\ell^2\right)
=\frac{1}{n^2}\|A^\top\xi\|_2^2\|p\|_2^2\leq \frac{L^2C_A^2\|\xi\|_2^2}{n^2}.
\end{align}

By \cite[Proposition~2.7.1]{vershynin2018hdp},
conditioned on $\xi$, $Y  \coloneqq \frac{1}{n}p^\top W^\top A^\top \xi = \sum_{i,\ell}c_{i\ell}W_{i\ell}$ is sub-Gaussian and we deduce that 
\begin{equation}
\|Y \mid \xi\|_{\psi_2}\leq 
QK\|c\|_2\leq QK\frac{LC_A\|\xi\|_2}{n}=C\frac{\|\xi\|_2}{n},
\end{equation}
where $C = QKLC_A$. By \cite[Proposition~2.6.6]{vershynin2018hdp}, we observe
\begin{align}\label{eq:Finalendoflemma}
\mathbb{P}(|Y|>t\mid \xi)
\leq 2\exp\left(-\frac{c_1t^2}{\|Y\|_{\psi_2}^2}\right)
\leq 2\exp\left(-\frac{c_1n^2t^2}{C^2\|\xi\|_2^2}\right)
=2\exp\left(-\frac{cn^2t^2}{\|\xi\|_2^2}\right),
\end{align}
where $c=\frac{c_1}{C^2}$. Finally, by rearranging \eqref{eq:Finalendoflemma} we obtain \eqref{eq:probinequal2} and conclude the proof.
\end{proof}

\section{Proof of Lemma~\ref{lem:active-conc}}\label{app:appF}

\begin{proof}
Recall from Section~\ref{subsec:Gngn} that $G_n(W) = \frac1n X_S(W)^\top X_S(W)-\tau_nB_S^\top B_S$ and is symmetric. Let $u\in\mathbb{R}^k$, such that $\|u\|_2 =1$, consider
\begin{align}
u^\top G_n(W)u
&=u^\top\left[
\frac1n X_S(W)^\top X_S(W)
-\tau_nB_S^\top B_S
\right]u=\frac{1}{n}(B_Su)^\top W^\top A^\top AW(B_Su)-\tau_n(B_Su)^\top(B_Su)\nonumber\\
&=\frac{1}{n}p^\top W^\top A^\top AWq
-\tau_n p^\top q.
\end{align}
where $p=q=B_Su$. Lemma~\ref{lem:bilinear} is usable for $u^\top G_n(W)u$.

 We consider a deterministic $\frac{1}{4}$-net
$\mathcal{N}\subset\mathbb{S}^{k-1}$, with cardinality at most $9^k$. We claim $\|M\|_\mathrm{op}\leq2\max_{u\in\mathcal{N}} |u^\top Mu|$, for a symmetric $M\in\mathbb{R}^{k\times k}$. Let $A_\mathcal{N} = \max_{u\in\mathcal{N}}|u^\top Mu|$, and let $x\in\mathbb{S}^{k-1}$, with $u\in \mathcal{N}$. Then,
\begin{equation}
\begin{aligned}
\left|x^\top M x-u^\top Mu\right|&\leq \left|(x-u)^\top Mx\right|+\left|u^\top M(x-u)\right|
\end{aligned}
\end{equation}
Considering the Cauchy-Schwarz inequality for $\left|(x-u)^\top Mx\right|$ and $\left|u^\top M(x-u)\right|$ we have
\begin{align}
\left|(x-u)^\top Mx\right|
&\leq 
 \|x-u\|_2\|M\|_\mathrm{op}\|x\|_2\leq \frac{1}{4}\|M\|_\mathrm{op},\nonumber\\
\left|u^\top M(x-u)\right|
&\leq \|x-u\|_2\|M\|_\mathrm{op}\|u\|_2\leq \frac{1}{4}\|M\|_\mathrm{op}.
\end{align}

Thus, $|x^\top Mx|\leq |u^\top Mu|+\frac{1}{2}\|M\|_\mathrm{op}\leq A_\mathcal{N}+\frac{1}{2}\|M\|_\mathrm{op}$. Considering the definition of the operator norm $\|M\|_\mathrm{op} = \sup_{\|x\|_2=1} |x^\top Mx|$ we deduce that $\|M\|_\mathrm{op}\leq 2A_\mathcal{N}$. Choosing $M = G_n(W)$ proves our claim. Let $t>0$, then,
\begin{align}\label{eq:beforefinalconct}
\mathbb{P}\left(\|G_n(W)\|_\mathrm{op}>t\right)
\leq \mathbb{P}\left(
\max_{u\in\mathcal{N}}|u^\top G_n(W)u|>\frac{t}{2}
\right)
\leq \sum_{u\in\mathcal{N}}
\mathbb{P}\left(|u^\top G_n(W)u|>\frac{t}{2}\right).
\end{align}
By Lemma~\ref{lem:bilinear}, and combining it with \eqref{eq:beforefinalconct}, we have
\begin{align}
\mathbb{P}\left(\|G_n(W)\|_\mathrm{op}>t\right)&\leq 9^k C_0e^{-c_0n\min\left(\frac{t^2}{4},\frac{t}{2}\right)}\nonumber\\
\mathbb{P}\left(\|G_n(W)\|_\mathrm{op}>t\right)&\leq C_0e^{k\log(9)-c_1n\min\left(t^2,t\right)},
\end{align}
where $c_1 = \frac{c_0}{4}$. Since $t>0$ is fixed, we have $\min(t^2,t)>0$, and by Assumption~\ref{ass:Sparse}, $k\log(9)-c_1n\min(t^2,t)\to -\infty,$ therefore, $\mathbb{P}\left(\|G_n(W)\|_\mathrm{op}>t\right)\to 0$ for all $t>0$, thus \eqref{eq:ineq1lemm6} holds. 
 To prove \eqref{eq:ineq2lemm6}, first recall that $g_n(W) = \frac1n X_S(W)^\top \xi\in\mathbb{R}^k$ from Section~\ref{subsec:Gngn} and let $u\in\mathbb{R}^k$, such that $\|u\|_2 =1$. Consider
\begin{align}
u^\top g_n(W)
=u^\top \frac1n X_S(W)^\top \xi
=\frac 1n (B_Su)^\top W^\top A^\top \xi
=\frac 1n p^\top W^\top A^\top \xi,
\end{align}
where $p \coloneqq B_Su$. Since $\|p\|_2\leq C_B$, Lemma~\ref{lem:bilinear} is usable. We use the same $\frac{1}{4}$-net as before and claim $\|g\|_2\leq \frac 43 \max_{u\in\mathcal{N}}|u^\top g|$ for all $g\in\mathbb{R}^k$.  Let $x = \frac{g}{\|g\|_2}\in\mathbb{S}^{k-1}$, and let $u\in\mathcal{N}$, consider
\begin{align}
\|g\|_2
&=x^\top g=u^\top g+(x-u)^\top g\leq |u^\top g|+|(x-u)^\top g|\leq |u^\top g|+\frac{1}{4}\|g\|_2\leq \frac 43 \max_{u\in\mathcal{N}}|u^\top g|,
\end{align}
where choosing  $g = g_n(W)$ proves our claim. Note that if $\|g_n(W)\|_2> t$, then $\max_{u\in \mathcal{N}} \left|u^\top g_n(W)\right|>\frac{3t}{4}$.

Let $\mathcal{E}_M = \left\{\|\xi\|_2\leq M\sqrt{n}\right\}$, be a high-probability noise event, defined for a fixed $M>0$. Appealing to  Lemma~\ref{lem:bilinear} results in
\begin{align}
\mathbb{P}\left(
\left|u^\top g_n(W)\right|>\frac 3 4 t
\mid \xi
\right)\leq 2\exp\left(
-\frac{c_0n(3t/4)^2}{\|\xi\|_2^2/n}
\right)
\leq 2\exp\left(-\frac{c_1nt^2}{M^2}\right),
\end{align}
for some constant $c_1>0$, and $\xi\in\mathcal{E}_M$. Taking the union bound over $\mathcal{N}$, for all $\xi\in \mathcal{E}_M$, gives
\begin{align}
\mathbb{P}\left(\|g_n(W)\|_2> t\mid \xi\right)&\leq \mathbb{P}\left(\max_{u\in\mathcal{N}}\left|u^\top g_n(W)\right|>\frac 3 4 t\middle| \xi\right)\leq \sum_{u\in\mathcal{N}}\mathbb{P}\left(\left|u^\top g_n(W)\right|>\frac 3 4 t\middle| \xi\right)\nonumber\\
&\leq 2\cdot9^k\mathrm{e}^{-\frac{c_1nt^2}{M^2}}.
\end{align}
We want to prove that $\limsup_{n\to \infty}{\mathbb{P}(\|g_n(W)\|_2>t}) = 0$. By basic probability rules, we have
\begin{equation}
{\mathbb{P}(\|g_n(W)\|_2>t})\leq \mathbb{P}\left(\|g_n(W)\|_2>t,\mathcal{E}_M\right)+ \mathbb{P}(\mathcal{E}_M^c).
\end{equation}
By $\mathbb{P}\left(\|g_n(W)\|_2>t, \mathcal{E}_M\right) = \mathbb{E}\left[\mathbf{1}_{\mathcal{E}_M}\mathbb{P}\left(\|g_n(W)\|_2>t\mid \xi\right)\right]$, we have
\begin{equation}
\mathbb{P}\left(\|g_n(W)\|_2>t, \mathcal{E}_M\right) \leq C\mathrm{e}^{k\log(9)-\frac{c_1nt^2}{M^2}}
\end{equation}
 Since $t>0$, and $M<\infty$ are fixed, and by Assumption~\ref{ass:Sparse}, $k\log(9)-\frac{c_1nt^2}{M^2}\to -\infty,$ therefore, 
\begin{equation}
    \mathbb{P}\left(\|g_n(W)\|_2>t, \|\xi\|_2\leq M\sqrt{n}\right)\xrightarrow{}0
\end{equation}
Using Markov's inequality, we have
\begin{align}
\mathbb{P}(\mathcal{E}_M^c)
=\mathbb{P}\left(\|\xi\|_2>M\sqrt{n}\right)
=\mathbb{P}\left(\|\xi\|_2^2>M^2n\right)
\leq \frac{\mathbb{E}\|\xi\|_2^2}{M^2n}
=\frac{\sigma^2}{M^2}.
\end{align}
Therefore, we observe $\limsup_{n\to \infty}{\mathbb{P}(\|g_n(W)\|_2>t}) = 0$ for arbitrarily large $M>0$. Hence, \eqref{eq:ineq2lemm6} holds, which completes the proof.
\end{proof}

\section{Proof of Lemma~\ref{lem:Lemma14}}\label{app:appG}

\begin{proof}

Throughout the proof, the objective function in \eqref{eq:detlasso} is defined following the same notation as Section~\ref{sec:nontrivialsolution}. We first show that $F(\beta)$ is $\kappa$-strongly convex, therefore guaranteeing uniqueness and stability of the solution. Let $q_n(\beta)  \coloneqq \frac{1}{2}
\left(\beta-\beta_S^\star\right)^\top
Q_n
\left(\beta-\beta_S^\star\right) $. For all $\beta, \beta'\in \mathbb{R}^k$, and $t\in[0,1]$, let $\beta_t \coloneqq (1-t)\beta+t\beta'$, then we have
\begin{align}\label{eq:subtract1}
q_n(\beta_t)
&=
\frac{1}{2}
\bigl((1-t)a+tb\bigr)^\top
Q_n
\bigl((1-t)a+tb\bigr)=
\frac{1}{2}(1-t)^2a^\top Q_na
+
t(1-t)a^\top Q_nb
+
\frac{1}{2}t^2b^\top Q_nb,
\end{align}
where $a  \coloneqq  \beta-\beta_S^\star$ and $b \coloneqq \beta'-\beta_S^\star$. On the other hand, we have
\begin{equation}\label{eq:subtract2}
(1-t)q_n(\beta)+tq_n(\beta')
=
\frac{1}{2}(1-t)a^\top Q_na
+
\frac{1}{2}tb^\top Q_nb.
\end{equation}
Subtracting \eqref{eq:subtract1} from \eqref{eq:subtract2} and some algebraic manipulations yields
\begin{equation}\label{eq:qnstrongconvex}
(1-t)q_n(\beta)+tq_n(\beta')-q_n(\beta_t)
=
\frac{t(1-t)}{2}
(\beta-\beta')^\top
Q_n
(\beta-\beta').
\end{equation}
By Assumption~\ref{ass:lassoUniqueness}, we deduce that $Q_n\succeq
 \kappa I_k$, which means that for all $u\in \mathbb{R}^k$ we have $u^\top Q_n u \geq \kappa\|u\|_2^2$. By rearranging \eqref{eq:qnstrongconvex} we deduce that $q_n(\beta)$ is a $\kappa$-strongly convex function. 
Note that the $\ell^1$ norm is also convex, thus
\begin{equation}
F(\beta_t)
\leq
(1-t)F(\beta)
+
tF(\beta')
-
\frac{\kappa}{2}
t(1-t)\|\beta-\beta'\|_2^2,
\end{equation}
which means $F(\beta)$ is $\kappa$-strongly convex, which results in uniqueness of $\hat{\beta}^\mathrm{det}$. Next, we prove the boundedness of $\|\hat{\beta}^\mathrm{det}\|_2^2$. We consider the optimality of $\hat{\beta}^\mathrm{det}$ which gives
\begin{equation}
F(\hat{\beta}^\mathrm{det})\leq F(0),
\end{equation}
which allows us to write
\begin{equation}\label{eq:boundboundbound}
\lambda\|\hat{\beta}^\mathrm{det}\|_1\leq F(\hat{\beta}^\mathrm{det})\leq \frac {\tau_n} 2\|B_S\beta_S^\star\|_2^2.
\end{equation}
To analyze the right-hand side, we recall that by the Assumptions $\|B_S\beta_S^\star\|_2\leq C_B\alpha$. Furthermore, $\tau_n = \frac 1 n \|A\|_F^2\leq \|A\|_\mathrm
{op}^2\leq C_A^2$. Combining this with \eqref{eq:boundboundbound} results in
\begin{equation}\label{eq:eq79}
\|\hat{\beta}^\mathrm{det}\|_1\leq \frac{C_A^2C_B^2\alpha^2}{2\lambda}.
\end{equation}
Now, because $\|\hat{\beta}^\mathrm{det}\|_2\leq \|\hat{\beta}^\mathrm{det}\|_1$, we deduce that $\sup_n \|\hat{\beta}^\mathrm{det}\|_2<\infty$. 

To prove \eqref{eq:stufftoproveforuniqueness}, let $\hat{Q}_n(W) \coloneqq \frac 1 n X_S(W)^\top X_S(W)$, and $\Delta_n(W) \coloneqq  \hat{Q}_n(W)-Q_n$. We must prove
\begin{equation}
\mathbb{P}\left(\lambda_\mathrm{min}\left(\hat{Q}_n(W)\right)\geq \frac \kappa 2\right)\to 1.
\end{equation}
By appealing to Weyl's inequality and Assumption~\ref{ass:lassoUniqueness} we have
\begin{equation}
\lambda_\mathrm{min}\left(\hat{Q}_n(W)\right)\geq \kappa - \|\Delta_n\|_\mathrm{op}.
\end{equation}
Now, we define an event $\mathcal{E}_n \coloneqq \left\{\|\Delta_n(W)\|_\mathrm{op}\leq \frac \kappa 2\right\}$ as a helper that gives
\begin{align}
\mathbb{P}\left(\lambda_\mathrm{min}\left(\hat{Q}_n(W)\right)\geq \frac \kappa 2 \right)\geq \mathbb{P}
\left(\mathcal{E}_n\right)
=1-\mathbb{P}\left(\|\Delta_n(W)\|_\mathrm{op}>\frac \kappa 2\right).
\end{align}
From Lemma~\ref{lem:active-conc} we know $\|\Delta_n(W)\|_\mathrm{op}\xrightarrow{\mathbb{P}}0$ and so \eqref{eq:stufftoproveforuniqueness} holds.
\end{proof}

\section{Proof of Lemma~\ref{lem:inactive-conc}}\label{app:appH}

\begin{proof}

By Lemma~\ref{lem:bilinear} with $p=b_j$ and $q = B_Sh_n$, we have
\begin{align}
\mathbb{P}\left(
\max_{j\in S^c}
\left|\frac{1}{n}X_j(W)^\top X_S(W)h_n
-\tau_n b_j^\top B_Sh_n\right|>t
\right)&\leq
\sum_{j\in S^c}
\mathbb{P}\left(
\left|\frac{1}{n}X_j(W)^\top X_S(W)h_n
-\tau_n b_j^\top B_Sh_n\right|>t
\right)\nonumber\\
&\leq C \mathrm{e}^{\log(d)-cn\min(t^2,t)} \to 0,
\end{align}
which is \eqref{eq:inactive-signal-conc}.

Let  $a_j(W)  \coloneqq \frac1n X_j(W)^\top X_S(W)-\tau_nb_j^\top B_S$, and $u\in\mathbb{S}^{k-1}$, with $p=b_j$, and $q=B_Su$, then
\begin{equation}
a_j(W)u = \frac 1n p^\top W^\top A^\top AWq - \tau_n p^\top q.
\end{equation}
We apply Lemma~\ref{lem:bilinear}, for $t>0$ which results in
\begin{equation}
\mathbb{P}\left(|a_j(W)u|>t\right) \leq Ce^{-cn\min(t^2,t)}.
\end{equation}
We use the same $\frac 1 4$-net approach used in Lemma~\ref{lem:active-conc}, which allows us to write
\begin{align} \mathbb{P}\left(\max_{j\in S^c}\|a_j(W)\|_2>t\right)&\leq \mathbb{P}\left(\max_{j\in S^c}\max_{u\in \mathcal{N}}|a_j(W)u| >\frac 3 4 t\right)\leq\sum_{j\in S^c}\sum_{u\in \mathcal{N}}\mathbb{P}\left(|a_j(W)u|>\frac 3 4 t\right)\nonumber\\
&\leq Q\mathrm{e}^{k\log(9)+\log(d)-cn\min(\frac{9t^2}{16},\frac{3t}{4})},
\end{align}
for some constant $Q$. Therefore, we deduce that 
\begin{equation}
\max_{j\in S^c}\left\|\frac{1}{n}X_j(W)^\top X_S(W) - \tau_n b_j^\top B_S\right\|_2 \xrightarrow{\mathbb{P}}0.
\end{equation}
Now, by the bound $\max_{j\in S^c}\|\tau_n b_j^\top B_S\|_2 \leq C_A^2 C_B^2$, we obtain \eqref{eq:inactive-cross-op}.

In the final step, to prove \eqref{eq:inactive-noise-conc}, we consider $\mathcal{E}_M$ as in Lemma~\ref{lem:active-conc} and deduce that
\begin{align}\label{eq:neededbound}
\mathbb{P}\left(
\max_{j\in S^c}
\left|\frac{1}{n}X_j(W)^\top\xi\right|>t,
\mathcal{E}_M
\right)
= \mathbb{E}\left[\mathbf{1}_{\mathcal{E}_M}
\mathbb{P}\left(
\max_{j\in S^c}
\left|\frac{1}{n}X_j(W)^\top\xi\right|>t \Big{|} \xi
\right)\right]
\leq  2d \mathrm{e}^{-\frac{cnt^2}{M^2}}\to 0,
\end{align}
for every fixed $M<\infty$. Furthermore, 
\begin{align}
\mathbb{P}\left(
\max_{j\in S^c}
\left|\frac{1}{n}X_j(W)^\top\xi\right|>t
\right)
\leq
\mathbb{P}\left(
\max_{j\in S^c}
\left|\frac{1}{n}X_j(W)^\top\xi\right|>t,
\mathcal{E}_M
\right)
+\mathbb{P}(\mathcal{E}_M^c).
\end{align}
Thanks to Markov's inequality and \eqref{eq:neededbound} we obtain \eqref{eq:inactive-noise-conc} and conclude the proof.
\end{proof}

\end{document}